\renewcommand{\epsilon}{\varepsilon}
\newcommand{\boF}{\mathcal{F}}
\newcommand{\boM}{\mathcal{M}}
\newcommand{\boH}{\mathcal{H}}
\newcommand{\boA}{\mathcal{A}}
\newcommand{\boO}{\mathcal{O}}
\newcommand{\boL}{\mathcal{L}}
\newcommand{\boS}{\mathcal{S}}
\newcommand{\boU}{\mathcal{U}}
\newcommand{\boZ}{\mathcal{Z}}
\newcommand{\LL}{\mathbf{L}}
\newcommand{\MM}{\mathbf{M}}
\newcommand{\bfd}{\mathbf{d}}
\newcommand{\bff}{\mathbf{f}}
\newcommand{\bfm}{\mathbf{m}}
\newcommand{\R}{\mathbb{R}}
\newcommand{\Z}{\mathbb{Z}}
\renewcommand{\H}{\mathbb{H}}
\renewcommand{\S}{\mathbb{S}}
\newcommand{\N}{\mathbb{N}}
\newcommand{\eps}{\varepsilon}
\newcommand{\dd}{\mathrm{d}}
\newcommand{\Ome}{\Omega}
\newcommand{\wtilde}{\widetilde}
\newtheorem{thm}{Theorem}
\newtheorem{defn}[thm]{Definition}
\newtheorem{prop}[thm]{Proposition}
\newtheorem{lem}[thm]{Lemma}
\newcommand{\barre}[1]{\overline{#1}}
\renewcommand{\phi}{\varphi}
\newtheorem*{thm*}{Theorem}
\newtheorem{claim}{Claim}
\newtheorem*{claim*}{Claim}
\theoremstyle{remark}
\newtheorem{remarq}{Remark}
\newtheorem*{rem*}{Remark}
\newcounter{remark}
\newcounter{case}
\newcounter{construction}
\newcounter{fact}
\newcounter{step}
\DeclareMathOperator{\Ric}{Ric}
\theoremstyle{plain}
\newtheorem*{thA}{Theorem A}
\newtheorem*{thB}{Theorem B}
\newtheorem*{thC}{Theorem C}
\title{Minimal hypersurfaces of least area}
\author{Laurent Mazet}
\address{Universit\'e Paris-Est, LAMA (UMR 8050), UPEC, UPEM, CNRS, 61, avenue
du G\'en\'eral de Gaulle, F-94010 Cr\'eteil cedex, France}
\email{laurent.mazet@math.cnrs.fr}
\author{Harold Rosenberg} 
\address{Instituto Nacional de Matematica Pura e Aplicada (IMPA) Estrada Dona
Castorina 110, 22460-320, Rio de Janeiro-RJ, Brazil}
\email{rosen@impa.br}
\thanks{The authors were partially supported by the ANR-11-IS01-0002 grant.}
\subjclass[2010]{Primary 53A10}
\begin{document}

\maketitle

\begin{abstract}
In this paper, we study closed embedded minimal hypersurfaces in a Riemannian
$(n+1)$-manifold ($2\le n\le 6$) that minimize area among such hypersurfaces. We show they
exist and arise either by minimization techniques or by min-max methods : they have index
at most $1$. We apply this to obtain a lower area bound for such minimal surfaces in some
hyperbolic $3$-manifolds.
\end{abstract}

%%%%%%%%%%
%%%%%%%%%%

\section{Introduction}

A classical result in minimal hypersurfaces theory is that, in $\S^{n+1}$ with the round
metric, the totally geodesic equatorial $\S^n$ has least area among minimal hypersurfaces
in $\S^{n+1}$. Actually it is a consequence of the monotonicity formula for minimal
hypersurfaces in $\R^{n+2}$. Another consequence of the monotonicity formula in a general
closed Riemannian manifold $M$ is that any minimal hypersurface has area at least some
positive constant depending on $M$. So one can ask to precise this constant or
to find a minimal hypersurface of least area among minimal hypersurfaces in $M$.

One way to understand this question is to look at how minimal hypersurfaces can be
constructed as critical points of the area functional in a closed Riemannian
$(n+1)$-manifold $M$. If $S$ is some closed hypersurface in $M$ non vanishing in homology,
geometric measure theory \cite{Fed} tells us that the area can be minimized in the
homology class of $S$ to produce a closed embedded minimal hypersurface $\Sigma$ in $M$
which minimizes the area. Actually, $\Sigma$ is a smooth hypersurface outside some
singular subset of Hausdorff dimension less than $n-7$. This approach produces minimal
hypersurfaces that are stable, \textit{i.e.} the Jacobi operator on $\Sigma$ has index
$0$.

If the homology group $H_n(M)$ vanishes, for example $M=\S^{n+1}$, the above idea can not
be applied. Almgren and Pitts
\cite{Alm,Pit2} then developed a min-max approach to construct minimal hypersurfaces in such
a manifold $M$. They prove that the fundamental class $[M]\in H_{n+1}(M)$ is associated to
a particular positive number $W_M$ called the width of the manifold. Then this number is
realized as the area of some particular minimal hypersurface (maybe with multiplicities);
this minimal hypersurface is called a min-max hypersurface associated to the
fundamental class $[M]$. Pitts proved the result when $2\le n\le 5$, it was extended by
Schoen and Simon~\cite{ScSi} later to higher values of $n$. Here also, the minimal
hypersurface may have a singular subset of Hausdorff dimension less than $n-7$. As a
consequence, there always exists a smooth minimal hypersurface in $M$ if $2\le n\le 6$.
This min-max approach works with one parameter families of hypersurfaces called sweep-outs,
so the min-max
hypersurface is expected to have index at most $1$. For example, all such min-max hypersurfaces in
the round $\S^{n+1}$ are the equatorial $\S^n$.

Coming back to the question of finding a minimal hypersurface of least area among minimal
hypersurfaces, the main result of our paper mainly says that such a hypersurface exists
and can be constructed by one of the above approaches. To be more precise, we take into account
the possible non orientability of hypersurfaces : let $\boO$ be the collection of all
smooth orientable connected closed embedded minimal hypersurfaces in $M$ and $\boU$ be the
collection
of the non orientable ones. If $2\le n\le 6$, we know that at least one of them is non
empty. We then define
$$
\boA_1(M)=\inf (\{|\Sigma|,\, \Sigma\in \boO\}\cup\{2|\Sigma|,\,\Sigma\in \boU\})
$$
where $|\cdot|$ denotes the area. The non orientable hypersurfaces are chosen to be
counted twice since, in several constructions, non orientable minimal hypersurfaces
appear with multiplicity $2$. So our main theorem can be stated as follows.

\begin{thA}%\label{thm:main}
Let $M$ be an oriented closed Riemannian $(n+1)$-manifold ($2\le n\le 6$). Then
$\boA_1(M)$ is equal to one of the following possibilities.
\begin{enumerate}
\item $|\Sigma|$ where $\Sigma\in \boO$ is a min-max hypersurface of $M$ associated to the
fundamental class of $H_{n+1}(M)$ and has index $1$.

\item $|\Sigma|$ where $\Sigma\in\boO$ is stable.
\item $2|\Sigma|$ where $\Sigma\in \boU$ is stable and its orientable $2$-sheeted
cover has index $0$ or $1$; if the index is $1$, $2|\Sigma|=W_M$.
\end{enumerate}

Moreover, if $\Sigma\in\boO$ satisfies $|\Sigma|=\boA_1(M)$, then $\Sigma$ is of type $1$ or
$2$ and if $\Sigma\in\boU$ satisfies $2|\Sigma|=\boA_1(M)$, then $\Sigma$ is of type $3$.
\end{thA}

So the theorem says that $\boA_1(M)$ is realized, moreover it characterizes all minimal
hypersurfaces that realize $\boA_1(M)$. Let us first notice that the restriction on the
dimension is the classical restriction about the regularity for minimal hypersurfaces in
high dimensions. The main property of the hypersurface $\Sigma$ is expressed in terms of the
index of its Jacobi operator: it is $0$ (stable case) or $1$. 

If $M$ has positive Ricci curvature, it is known that there is no stable orientable minimal
hypersurface. So, in that case, the above theorem is similar to the main result obtained
in \cite{Zho} where Zhou characterizes the min-max hypersurface in the positive Ricci
case. Actually the estimate on the index of the double cover in the non orientable case
does not appear in the work of Zhou. For the rest, the proof of our result is based on
similar ideas to the work of Zhou.

Of course, it would be interesting to say more about the hypersurface that appears in
Theorem~A, for example about its topology. In dimension $3$ ($n=2$), we are
able to give some improvements to our main results. In fact we prove that in the index $1$
case for type $1$ and $3$ surfaces, the genus of the surface $\Sigma$ can not be to small
and is controlled by the Heegaard genus of the ambient manifold $M$; this will be
Theorem~B. In \cite{MaNe3},
Marques and Neves look also for control on the genus of index~$1$ minimal surfaces. In
fact, finding upper bounds for the genus of min-max surfaces was first present in the work
of Smith \cite{Smi} about the existence of minimal $2$-spheres in Riemannian $3$-spheres and
has received major contributions by De~Lellis and Pellandini~\cite{DeLPe} and
Ketover~\cite{Ket}.

Actually, sometimes, index and genus can be combined to estimate the area of a minimal surface
(see \cite{MaNe3} for an example). So one consequence of our improvement is that we can
give a lower bound for the area of minimal surfaces in hyperbolic $3$-manifolds.

\begin{thC}
Let $M$ be a closed orientable hyperbolic $3$-manifold. If the Heegaard genus of $M$ is at
least $7$ then $\boA_1(M)\ge 2\pi$. In other words, any orientable minimal surface in $M$
has area at least $2\pi$ and any non orientable minimal surface has area at least $\pi$.
\end{thC}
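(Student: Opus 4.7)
The plan is to use Theorem~A to reduce $\boA_1(M)$ to a minimal surface of one of three types, and to analyze each case using Gauss--Bonnet and the Gauss equation in a hyperbolic $3$-manifold. First I would record the standing identities: for any minimal $\Sigma\subset M$, the Gauss equation gives $K_\Sigma = -1-\tfrac12|A|^2 \le -1$, so by Gauss--Bonnet there is no minimal $2$-sphere ($\chi=2$) or minimal torus ($\chi=0$) in $M$, and hence every closed immersed orientable minimal surface in $M$ has genus $\ge 2$. Moreover, for an orientable $\Sigma$ of genus $g$, Gauss--Bonnet yields
$$
|\Sigma| = 4\pi(g-1) - \tfrac12\int_\Sigma|A|^2.
$$

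The two stable subcases of Theorem~A are then essentially immediate. If $\Sigma$ is of type $2$ (orientable, stable), testing the stability inequality with $\phi \equiv 1$ and using $\mathrm{Ric}(N,N)=-2$ gives $\int_\Sigma|A|^2 \le 2|\Sigma|$; inserting this in the Gauss--Bonnet identity produces $|\Sigma|\ge 2\pi(g-1)\ge 2\pi$. If $\Sigma$ is of type $3$ with its orientable double cover $\tilde\Sigma$ of index $0$, the same argument applied to the closed immersed minimal surface $\tilde\Sigma$ (of genus $\ge 2$) yields $|\tilde\Sigma|\ge 2\pi$, and hence $2|\Sigma|\ge 2\pi$.

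The remaining cases, $\Sigma$ of type $1$ and $\Sigma$ of type $3$ with double cover of index $1$, are where the Heegaard hypothesis enters. Here I would invoke Theorem~B to obtain a lower bound on the genus of $\Sigma$ (resp.\ of $\tilde\Sigma$) in terms of the Heegaard genus $h\ge 7$, and pair it with an index $1$ area estimate. The latter I would derive from the Courant nodal decomposition applied to the second eigenfunction of the Jacobi operator: $\Sigma = \Omega_+ \cup \Omega_-$, with each $\Omega_\pm$ Dirichlet-stable. Running the stability/Gauss--Bonnet computation on each piece with suitably chosen cut-off test functions produces an estimate of the form $|\Sigma|\ge 2\pi(g-c)$ with $c$ a small absolute constant. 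The hypothesis $h\ge 7$ is then tuned so that Theorem~B's genus bound combined with this inequality still yields $|\Sigma|\ge 2\pi$ (respectively $2|\Sigma|\ge 2\pi$).

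The main obstacle is the index $1$ case. The loss of a fixed number of genus units incurred by the Courant splitting (compared with the clean $2\pi(g-1)$ bound available in the stable case) is precisely what forces the Heegaard genus threshold up from $2$, which suffices in the stable subcases, to $7$. Closing the arithmetic at exactly $h=7$ therefore requires keeping the loss constant $c$ as small as possible in the index $1$ area inequality and extracting the sharpest available genus lower bound from Theorem~B; matching these two quantitatively is the technical heart of the proof.
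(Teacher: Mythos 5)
Your reduction and your treatment of the two stable subcases are exactly what the paper does: Theorem~B reduces $\boA_1(M)$ to one of the three types, and for a stable orientable surface (or a stable double cover) the test function $1$ combined with the Gauss equation and Gauss--Bonnet gives $|\Sigma|\ge 2\pi(g_\Sigma-1)\ge 2\pi$. The gap is in your index~$1$ area estimate. The Courant decomposition into the two nodal domains $\Omega_\pm$ of the second eigenfunction does give two Dirichlet-stable domains, but you cannot then run the ``test function $1$'' argument on each piece: an admissible test function must vanish on the nodal set, and in dimension $2$ a nodal \emph{curve} has strictly positive capacity, so $\int|\nabla\phi_\eps|^2$ for any cut-off $\phi_\eps$ approximating $\mathbf{1}_{\Omega_\pm}$ is bounded below by that capacity, which is not controlled. (The logarithmic cut-off trick only kills point singularities.) So no inequality of the form $|\Sigma|\ge 2\pi(g-c)$ with a small absolute constant $c$ comes out of this route, and indeed such an inequality would be inconsistent with the hypothesis $g_H(M)\ge 7$ being the actual threshold: if it held you would only need $g\ge 1+c$.

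What the paper proves instead (Lemma~\ref{lem:area}) is the weaker but correct bound $|\Sigma|\ge 2\pi\bigl(g_\Sigma-2-\left[\frac{g_\Sigma+1}{2}\right]\bigr)$, obtained by the Hersch trick: take a conformal map $\phi:\Sigma\to\S^2$ of degree at most $1+\left[\frac{g_\Sigma+1}{2}\right]$, compose with a M\"obius transformation $h$ so that the three coordinates of $h\circ\phi$ are $L^2$-orthogonal to the first eigenfunction $u_1$, use index~$1$ to get the stability inequality for each coordinate, sum, and use conformality to evaluate $\int_\Sigma\|\nabla(h\circ\phi)\|^2=8\pi\deg\phi$. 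The loss here is roughly \emph{half the genus}, not an additive constant, which is precisely why the threshold is $g_H(M)\ge 7$ for type~$1$ (giving $2\pi(7-2-4)=2\pi$) and $g_{\widetilde\Sigma}\ge g_H(M)-1\ge 6$ for type~$3$ with index~$1$ double cover (giving $2\pi(6-2-3)=2\pi$). To close your argument you would need to replace the nodal-domain step by this conformal balancing argument (or some other genuine index~$1$ estimate) and redo the arithmetic accordingly.
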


Let us notice that, in the above result, if $M$ does not have any non orientable surface, we
need only assume that the Heegard genus is at least~$6$.

To prove Theorem~A, one idea would be to consider a minimizing sequence and use
some compactness result for minimal hypersurfaces to get some limit hypersurface. The
main default with this approach is that \textit{a priori} the eventual limit need not be a
smooth hypersurface. However, this minimization argument can be done among stable minimal
hypersurfaces to produce a stable minimal hypersurface with least area. So we can construct a
stable minimal hypersurface that realizes $\boA_\boS(M)$ where $\boA_\boS(M)$ is defined
as $\boA_1(M)$ but with an infimum computed only among stable minimal hypersurfaces. If
$\boA_1(M)=\boA_\boS(M)$, this almost gives the proof of the main theorem.

In fact, the proof of Theorem~A mainly consists in proving that
$\boA_1(M)=\min(W_M,\boA_\boS(M))$. So we need to understand minimal hypersurfaces
$\Sigma$ with
area less than $\boA_\boS(M)$. Actually, we prove that $\Sigma$ can be seen as a leaf of
maximal area of some sweep-out of the manifold $M$. As a consequence, this implies that the
area of the min-max hypersurface constructed by Pitts is less than the area of $\Sigma$ so
this min-max hypersurface has to realize $\boA_1(M)$. The proof of the existence of the
above sweep-out uses another point of view about min-max theory for minimal hypersurfaces
which is developed by Colding, De Lellis and Tasnady~\cite{CoDeL,DeLTa}.

Actually the questions we look at in this paper can be generalized. Consider the space
$\boM=\boO\cup \boU$ of closed embedded minimal hypersurfaces on a manifold $M$. Let
$\boA:\boM\to \R^+$ be the area function. What are the properties of $\boA$? Is $\boA$
always unbounded ? In this paper
we discussed $\boA_1(M)$, the minimum of $\boA$. In general the values of $\boA$ are
difficult to understand. For example, when $M$ is the standard $3$-sphere, we know that
$\boA_1(M)= 4\pi$. The next value of $\boA$ is $2\pi^2$, the area of the Clifford torus.
This is very difficult to prove and is an important part of the solution of the Willmore
conjecture by Marques and Neves \cite{MaNe}. Let us also notice that there is a gap in the
values of $\boA$ after $2\pi^2$. Then one has the Lawson examples that are genus $g$
surfaces in $\boM$ whose areas converge to $8\pi$ as $g\to \infty$. One can see these
surfaces (as $g\to\infty$) as desingularizing two orthogonal geodesic $2$-spheres along
their intersection.

Actually we do not believe $8\pi$ can be realized as the area of a minimal surface.
%If this holds, the map $\boA$ is not a proper map.

By desingularizing $k$ geodesic $2$-spheres meeting along a common geodesic at equal
angles, one obtains surfaces in $\boM$ whose areas converge to $4\pi k$.

For any $M$ of dimension $3$, one can consider the surfaces in $\boM$ of genus at most $g$
(there may not be any) and try to calculate the minimum $\boA^g(M)$ of $\boA$ on these
surfaces. $\boA^g(M)$ is realized (provided some genus $g$ surface exists in $M$). The
behaviour of $\boA^g(M)$ would be interesting to understand.

As an example, what are these quantities for the space $M=\S^2\times\S^1$, $\S^2$ the unit
$2$-sphere and $\S^1$ the circle of length $\ell$\,?

We also notice that Theorem~A does not solve the following question: if $(\Sigma_n)_n$ is a
sequence of minimal hypersurfaces whose areas converge to $\boA_1(M)$, do we have convergence
of $(\Sigma_n)_n$ to one of the smooth hypersurfaces of Theorem~A?

This article is organized as follow. In Section~\ref{sec:minimal}, we recall some classical
definitions about the index of minimal hypersurfaces. In Section~\ref{sec:minmax}, we give a
quick presentation of the min-max theories of Colding, De Lellis and Tasnady (the
continuous setting) and of Almgren and Pitts (the discrete setting).

Section~\ref{sec:minstable} is devoted to the minimization among stable hypersurfaces, we
define $\boA_\boS(M)$ and prove that it is realized. In Section~\ref{sec:sweepout}, we
construct the sweep-out associated to a minimal hypersurface with area less than
$\boA_\boS(M)$. Finally the proof of the main theorem is given in Section~\ref{sec:mainthm}.

From Section~\ref{sec:mainthm3}, we look at the dimension $3$ case ; in
Section~\ref{sec:mainthm3}, we improve Theorem~A to obtain some control of the
topology of the surface. This result is then applied in Section~\ref{sec:hyperbolic} to give
a lower bound for the area of minimal surfaces in hyperbolic $3$-manifolds.

Our work is strongly influenced by the paper of Marques and Neves \cite{MaNe3}. Indeed, at
a recent meeting, when we told Fernando C. Marques about our work, he returned the next
day with the ideas we had used to prove $\boA_1(M)$ is realized.

\section{Minimal hypersurfaces}\label{sec:minimal}

In this section, we give some definitions and recall some basic facts about minimal
hypersurfaces.

In this paper, we look at hypersurfaces $\Sigma$ in a certain Riemannian $(n+1)$-manifold
$M$. All along the paper, $M$ will be orientable. If it is not precised, all hypersurfaces
are assumed to be embedded.

%%%%%%%%%%

\subsection{Minimal hypersurfaces}

Minimal hypersurfaces in $M$ are those with vanishing mean curvature vector, they appear
as critical points of the area functional for hypersurfaces. 

In the following, we will denote by $\boO$ the collection of all orientable minimal
hypersurfaces and by $\boU$ the collection of all non orientable ones.

As in the introduction, we define
$$
\boA_1(M)=\inf(\{|\Sigma|,\Sigma\in \boO\}\cup\{2|\Sigma|,\Sigma\in\boU\})
$$

%%%%%%%%%%

\subsection{The stability operator}

Minimal hypersurfaces are critical points of the area functional on hypersurfaces. The
study of the second derivative of the area functional on such a critical point is given by
the stability operator.

Let $\Sigma$ be a minimal hypersurface in an orientable Riemannian $(n+1)$-manifold $M$.
The stability operator is a quadratic differential form acting on sections of the normal
bundle $N\Sigma$ to $\Sigma$. If $\xi\in\Gamma(N\Sigma)$ is such a section, we have
$$
Q_\Sigma(\xi,\xi)=\int_\Sigma \|\nabla^\perp\xi\|^2- \Ric_M(\xi,\xi)-\|A\|^2\|\xi\|^2
d\text{vol}_\Sigma
$$
where $\nabla^{\perp}$ is the normal connection on $N\Sigma$ coming from the Levi-Civita
connection on $M$, $\Ric_M$ is the Ricci curvature tensor on $M$ and $\|A\|$ is the norm
of the second fundamental form on $\Sigma$.

A minimal hypersurface is called stable if $Q$ is non-negative. This means that $\Sigma$ is
a minimum at order $2$ for the area functional. The index of $\Sigma$ is the maximal
dimension of linear subspaces $E$ of $\Gamma(N\Sigma)$ such that $Q$ is negative definite
on $E$.

If $\Sigma$ is $2$-sided, \textit{i.e.} $N\Sigma$ is a trivial line bundle, there is a
unit normal vector field $\nu$ along $\Sigma$ so any section $\xi$ can be written as
$\xi=u\nu$ where $u$ is a function. Thus, the stability operator becomes an operator on
functions
$$
Q_\Sigma(u,u)=\int_\Sigma \|\nabla u\|^2- (\Ric_M(\nu,\nu)-\|A\|^2)u^2
d\text{vol}_\Sigma=-\int_\Sigma u\boL_\Sigma u\, d\text{vol}_\Sigma
$$ 
where $\boL_\Sigma u=\Delta u+(\Ric_M(\nu,\nu)+\|A\|^2)u$ is called the Jacobi operator on
$\Sigma$. 

If $\Sigma$ is a closed minimal hypersurface, $-\boL_\Sigma$ has a discrete spectrum
$\lambda_1<\lambda_2\le\cdots$. The index of $\Sigma$ is then the number of negative
eigenvalues of $-\boL_\Sigma$.

If $\Sigma$ is orientable, $\Sigma$ is $2$-sided since $M$ is orientable and the above
description applies. If $\Sigma$ is non orientable, $\Sigma$ is not $2$-sided but we can
consider $\pi: \wtilde\Sigma\to \Sigma$ the orientable double cover of $\Sigma$. The map
$\pi$ defines a minimal immersion of $\wtilde\Sigma$ in $M$ which is $2$-sided so the
Jacobi operator $\boL_{\wtilde\Sigma}$ is defined. The covering map $\pi$ comes with a
unique non trivial deck transformation $\sigma$ which is an involution. If $\nu$ is a unit
normal vector field along $\wtilde\Sigma$ we have $\nu\circ \sigma=-\nu$. So sections of
$N\Sigma$ correspond to $\sigma$-odd functions on $\wtilde\Sigma$ and $\sigma$-even
functions on $\wtilde\Sigma$ correspond to functions on $\Sigma$. We also notice that,
for a function $u$ on $\wtilde\Sigma$,
$\boL_{\wtilde\Sigma}(u\circ\sigma)=(\boL_{\wtilde\Sigma}u)\circ\sigma$. Thus the
hypersurface $\Sigma$ is stable if and only if $Q_{\wtilde\Sigma}$ is non negative on
$\sigma$-odd functions. As another consequence, if $\Sigma$ is stable and $u$ is an
eigenfunction of $-\boL_{\wtilde\Sigma}$ with a negative eigenvalue, $u$ is
$\sigma$-even.

%%%%%%%%%%
%%%%%%%%%%

\section{Preliminaries about min-max theory}\label{sec:minmax}

In this paper, we will use several times the min-max approach to construct minimal
hypersurfaces. There are two major settings for the min-max theory: the discrete setting
which is due to Almgren and Pitts \cite{Pit2} and the continuous setting due to Colding,
De Lellis \cite{CoDeL} and De Lellis, Tasnady \cite{DeLTa}. Both settings have their own
interest, the continuous setting is easier to consider for some geometric considerations
and the discrete setting is more linked to the topology of the ambient space.

Good introductions to both settings can be found in several papers (see \cite{CoDeL,DeLTa,MaNe,Zho}).
So here, we only summarize facts that we will really use.

Let $M$ be a compact Riemannian $(n+1)$-manifold with or without boundary. $\boH^k$ will
denote the $k$-dimensional Hausdorff measure and, when $\Sigma$ is a $n$-dimensional
submanifold, we use the following notation $|\Sigma|=\boH^n(\Sigma)$ and we say that
$|\Sigma|$ is the area of $\Sigma$ even if it has dimension larger than $2$. 
% We will also use this notation when $\Sigma$ is a submanifold outside a finite set of
%points. 
If $\Sigma$ is an immersed hypersurface, we also use $|\Sigma|$ to compute its volume
which could be different from the $\boH^n$-measure of its image in $M$.

%%%%%%%%%%

\subsection{The continuous setting}

Let us recall some definitions and results from the papers of De Lellis and Tasnady
\cite{DeLTa} and Zhou \cite{Zho}. First let us define what kind of family of hypersurfaces
we will consider.

\begin{defn}
A family $\{\Gamma_t\}_{t\in [a,b]}$ of closed subsets of $M$ with finite $\boH^n$-measure
is called a \emph{generalized smooth family} if
\begin{itemize}
\item[(s1)] For each $t$ there is a finite set $P_t\subset M$ such that $\Gamma_t\setminus
P_t$ is either a smooth hypersurface in $M\setminus P_t$ or the empty set;
\item[(s2)] $\boH^n(\Gamma_t)$ depends continuously on $t$ and $t\mapsto \Gamma_t$ is
continuous in the Hausdorff sense;
\item[(s3)] on any $U\subset\subset M\setminus P_{t_0}$, $\Gamma_t\xrightarrow[]{t\to t_0}
\Gamma_{t_0}$ smoothly in $U$.
\end{itemize}
\end{defn}

Now let us define the continuous sweep-outs for ambient manifolds with or without boundary.

\begin{defn}
Let $M$ be a closed manifold. A generalized smooth family $\{\Gamma_t\}_{t\in[a,b]}$ is a
\emph{continuous sweep-out} of $M$ if there exists a family $\{\Ome_t\}_{t\in[a,b]}$ of
open subsets of $M$ such that
\begin{itemize}
\item[(sw1)] $(\Gamma_t\setminus \partial\Ome_t)\subset P_t$ for any $t$;
\item[(sw2)] $\boH^{n+1}(\Ome_t\triangle \Ome_s)\to 0$ as $t\to s$ (where $\triangle$
denotes the symmetric difference of subsets).
\item[(sw3)] $\Ome_a=\emptyset$ and $\Ome_b=M$;
\end{itemize}
\end{defn}

\begin{defn}
Let $M$ be a compact manifold with non empty boundary. A generalized smooth family
$\{\Gamma_t\}_{t\in[a,b]}$ is a \emph{continuous sweep-out} of $M$ if there exists a
family $\{\Ome_t\}_{t\in[a,b]}$ of open subsets of $M$ satisfying \emph{(sw2)} and
\begin{itemize}
\item[(sw0')] $\partial M\subset \Ome_t$ for $t>a$;
\item[(sw1')] $(\Gamma_t\setminus \partial_*\Ome_t)\subset P_t$ for any $t>a$ where
$\partial_*\Ome_t=\partial\Ome_t\setminus \partial M$;
\item[(sw3')] $\Ome_a=\emptyset$, $\Ome_b=M$ and there are $\eps>0$ and a smooth function
$w:[0,\eps]\times\partial M\to \R$ with $w(0,p)=0$ and $\partial_tw(0,p)>0$ such that 
$$
\Gamma_{a+t}=\{\exp_p(w(t,p)\nu(p)), p\in \partial M\}
$$
for $t\in[0,\eps]$ and $\nu$ the inward unit normal to $\partial M$.
\end{itemize}
\end{defn}

For a continous sweep-out as above $\{\Gamma_t\}_{t\in[a,b]}$, we define the quantity
$\LL({\Gamma_t})=\max_{t\in[a,b]}|\Gamma_t|$.

Two continuous sweep-outs $\{\Gamma_t^1\}_{t\in[a,b]}$ and $\{\Gamma_t^2\}_{t\in[a,b]}$ are said to
be homotopic if, informally, they can be continuously deformed one to the other (the precise
definitions are Definition 0.6 in \cite{DeLTa} and Definition 2.5 in \cite{Zho}). Then a
family $\Lambda$ of sweep-outs is called homotopically closed if it contains the homotopy
class of each of its elements. For such a family $\Lambda$, we can define the width
associated to $\Lambda$ as
$$
W(\Lambda)= \inf_{\{\Gamma_t\}\in\Lambda} \LL(\{\Gamma_t\})
$$

We notice that when $M$ has no boundary, $W(\Lambda)>0$ for any $\Lambda$ (see Proposition
0.5 in \cite{DeLTa}).

If $\Lambda$ is a homotopically closed family of sweep-outs and the sequence
$(\{\Gamma_t^k\}_t)_{k\in\N}$ of sweep-outs is such that
$\LL(\{\Gamma_t^k\}_t)\xrightarrow[k\to \infty]{}W(\Lambda)$, a min-max sequence is a
sequence $(\Gamma_{t_k}^k)$ (or a subsequence of this sequence) such that
$|\Gamma_{t_k}^k|\xrightarrow[k\to \infty]{}W(\Lambda)$. The main existence-result about
the min-max theory in this setting is (see Theorem 0.7 \cite{DeLTa} and Theorem 2.7 \cite{Zho})

\begin{thm}[De Lellis, Tasnady \cite{DeLTa}, Zhou \cite{Zho}]\label{thm:contminmax}
Let $M$ be a compact Riemannian $(n+1)$-manifold ($2\le n\le 6$). Let $\Lambda$ be a
homotopically closed family of continuous sweep-outs of $M$. If $M$ has no
boundary, there is a min-max sequence that converges (in the varifold sense) to an
integral varifold whose support is a finite collection of embedded connected disjoint minimal
hypersurfaces of $M$. As a consequence 
$$ W(\Lambda)=\sum_{i=1}^pn_i|S_i|$$
where $\cup_{i=1}^p S_i$ is the support of the limit varifold.

If $M$ has boundary, the same result is true if we assume that the mean curvature vector
of $\partial M$ does not vanish and points into $M$ and $W(\Lambda)>|\partial M|$.
\end{thm}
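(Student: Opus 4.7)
The plan is to follow the two-step Almgren--Pitts strategy in the form adapted to the continuous setting by Colding--De~Lellis and De~Lellis--Tasnady: first use a \emph{pull-tight} procedure to arrange that every subsequential varifold limit along a min-max sequence is stationary, and then invoke the ``almost-minimizing on small annuli'' combinatorial scheme together with the Pitts/Schoen--Simon regularity theorem to upgrade such a stationary limit to a finite disjoint union of smooth embedded minimal hypersurfaces.

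\emph{Pull-tight.} Given a minimizing sequence of sweep-outs $\{\Gamma^k_t\}_{t\in[a,b]}\in \Lambda$ with $\LL(\{\Gamma^k_t\})\to W(\Lambda)$, the first step is to deform each slice $\Gamma^k_t$ by an ambient isotopy $\varphi_{t,k}$, depending continuously on $t$ and supported in the interior of $M$, with two properties: the deformed family is still a continuous sweep-out homotopic to $\{\Gamma^k_t\}$ (hence still in $\Lambda$), and whenever the varifold associated to $\Gamma^k_t$ sits outside a fixed neighborhood of the set of stationary integral varifolds (in a varifold metric) its mass is strictly decreased by a definite amount that depends only on that distance. One builds such an isotopy pointwise in $t$ by flowing along a test vector field for which the first variation of the offending varifold is negative, then smooths in $t$ by a partition of unity argument, using the compactness of the relevant space of varifolds to make the construction uniform. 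After pull-tight, any subsequence $\Gamma^k_{t_k}$ with $|\Gamma^k_{t_k}|\to W(\Lambda)$ converges (up to subsequence) in the varifold sense to a stationary integral varifold $V$ of mass $W(\Lambda)$.

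\emph{Almost-minimizing and regularity.} To rule out pathological stationary limits, one extracts a further min-max sequence $\Sigma_k = \Gamma^k_{t_k}$ that is \emph{almost-minimizing on small annuli}: for every $p\in M$ and every sufficiently small annulus $A_r(p)=B_r(p)\setminus\overline{B_{r/2}(p)}$, no continuous deformation of $\Sigma_k$ supported in $A_r(p)$ can decrease its area by more than $\eps_k\to 0$ without first increasing it substantially. This is the $(\eps,\delta)$-combinatorial argument of Pitts \cite{Pit2}, reformulated for continuous sweep-outs in \cite{DeLTa,Zho}. The limit $V$ is then stationary, integral, and almost-minimizing in small annuli at every point, and the Pitts regularity theorem (extended to $n=6$ by Schoen--Simon \cite{ScSi}) forces $\mathrm{supp}(V)=\bigcup_{i=1}^p S_i$ to be a finite disjoint union of smooth embedded connected closed minimal hypersurfaces with integer multiplicities $n_i$ equal to the densities of $V$ on the $S_i$. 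Disjointness of the $S_i$ is the maximum principle for minimal hypersurfaces, and the mass of $V$ gives $W(\Lambda)=\sum n_i|S_i|$.

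\emph{Boundary case.} The only extra difficulty is that mass might concentrate on $\partial M$. The assumption that the mean curvature vector of $\partial M$ points strictly inward provides a one-sided collar foliated by strictly mean-convex hypersurfaces, along which any mass accumulating near $\partial M$ can be pushed inward with a definite area gain; this makes the pull-tight compatible with condition (sw0')--(sw3'). The extra hypothesis $W(\Lambda)>|\partial M|$ then rules out the degenerate possibility that the min-max is realized by the boundary itself, so $V$ is supported in $M\setminus\partial M$ and the interior argument applies verbatim. The main obstacle throughout is the combinatorial almost-minimizing step together with the varifold regularity theorem it feeds into: both are essentially non-trivial and occupy the bulk of \cite{Pit2,ScSi,DeLTa,Zho}, and they are used here as black boxes.
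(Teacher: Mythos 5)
Your outline is the standard two-step Almgren--Pitts argument (pull-tight, then almost-minimizing in annuli plus Pitts/Schoen--Simon regularity, with Zhou's boundary modification), which is exactly how the cited references \cite{DeLTa,Zho} establish this theorem; the paper itself does not reprove it but quotes it as a black box. Your sketch is consistent with that proof, with the genuinely hard steps (the combinatorial almost-minimizing extraction and the regularity theorem) correctly identified and deferred to the references.
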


We refer to \cite{Sim} for the definition of the convergence in varifold sense.

\begin{remarq}
One consequence of this result that we will use is that if we have some continuous sweep-out
$\{\Gamma_t\}_t$ of $M$ ($\partial M=\emptyset$) then there is some connected minimal
hypersurface $S$
in $M$ with $|S|\le \LL(\{\Gamma_t\})$. 
\end{remarq}

%%%%%%%%%%

\subsection{The discrete setting}

Here we recall some aspects of the Almgren-Pitts min-max theory which deals with discrete
families of elements of $\boZ_n(M)$ \textit{i.e} integral rectifiable $n$-currents in $M$
with no boundary. For definitions about currents, we refer to \cite{Fed,Sim}.

If $I=[0,1]$, we first introduce some cell complex structure on $I$ and $I^2$.
\begin{defn}
Let $j$ be an integer, we define $I(1,j)$ to be the cell complex of $I$, whose $0$-cells
are the points $[\frac i{3^j}]$ for $i=0,\dots, 3^j$ and the $1$-cells are the intervals
$[\frac i{3^j},\frac{i+1}{3^j}]$ for $i=0,\dots, 3^j-1$.

We also define a cell complex $I(2,j)$ on $I^2$ by $I(2,j)=I(1,j)\otimes I(1,j)$.
Similarly $I(m,j)$ can be defined on $I^m$.
\end{defn}

Let us introduce some notations about these cell complexes
\begin{itemize}
\item $I_0(1,j)$ denotes the set of the boundary $0$-cells $\{[0],[1]\}$.
\item $I(m,j)_0$ denotes the set of $0$-cells of $I(m,j)$.
\item The distance between two elements of $I(m,j)_0$ is 
$$
\bfd: I(m,j)_0\times I(m,j)_0\to \N\ ;\ (x,y)\mapsto 3^j\sum_{i=1}^m|x_i-y_i|
$$
\item The projection map $n(i,j) : I(m,i)_0\to I(m,j)_0$ is defined such that $n(i,j)(x)$
is the unique element in $I(m,j)_0$ such that 
$$\bfd(x,n(i,j)(x))=\inf\{\bfd(x,y), y\in I(1,j)_0\}.$$
\end{itemize}

We are going to look at maps $\phi : I(m,j)_0\to \boZ_n(M)$. For such a map $\phi$, we
define its fineness by
$$
\bff(\phi)=\sup\left\{\frac{\MM(\phi(x)-\phi(y))}{\bfd(x,y)},x,y\in I(m,j)_0 \text{ and
}x\neq y\right\}
$$
where $\MM$ denotes the mass of a current.

When we write $\phi :I(1,j)_0\to (\boZ_n(M),\{0\})$, we mean $\phi(I(1,j)_0)\subset
\boZ_n(M)$ and $\phi(I_0(1,j))=\{0\}$.

\begin{defn}
Let $\delta$ be a positive real number and $\phi_i : I(1,k_i)_0\to(\boZ_n(M),\{0\})$,
$i=1,2$. We say that $\phi_1$ and $\phi_2$ are \emph{$1$-homotopic in $(\boZ_n(M),\{0\})$
with fineness $\delta$} if there are $k_3\in\N$, $k_3\ge \max(k_1,k_2)$, and a map
$$
\psi:I(2,k_3)_0 \to \boZ_n(M)
$$
such that
\begin{itemize}
\item $\bff(\psi)\le \delta$;
\item $\psi(i-1,x)=\phi_i(n(k_3,k_i)(x))$ for all $x\in I(1,k_3)_0$;
\item $\psi(I(1,k_3)_0\times \{[0],[1]\})=0$.
\end{itemize}
\end{defn}

Let us now define the equivalent of generalized smooth family in the discrete setting.

\begin{defn}
A \emph{$(1,\MM)$-homotopy sequence of maps into $(\boZ_n(M),\{0\})$} is a sequence of
maps $\{\phi_i\}_{i\in\N}$,
$$
\phi_i:I(1,k_i)_0\to (\boZ_n(M),\{0\}),
$$
such that $\phi_i$ is $1$-homotopic to $\phi_{i+1}$ in $(\boZ_n(M),\{0\})$ with fineness
$\delta_i$ and
\begin{itemize}
\item $\lim_{i\to \infty}\delta_i=0$;
\item $\sup_i\{\MM(\phi_i(x)), x\in I(1,k_i)_0\}<+\infty$.
\end{itemize}
\end{defn}

As in the continuous setting, two $(1,\MM)$-homotopy sequences can be said to be homotopic
and this defines an equivalence relation (see Section 4.1 in \cite{Pit2} or Definition 4.4
in \cite{Zho}). The set of all equivalence  classes is denoted by
$\pi_1^\#(\boZ_n(M),\MM,\{0\})$. One of the main results of the Almgren-Pitts theory says
that $\pi_1^\#(\boZ_n(M),\MM,\{0\})$ is naturally isomorphic to the homology group
$H_{n+1}(M,\Z)$ (Theorem 4.6 in \cite{Pit2}, see also \cite{Alm}).

If $S=\{\phi_i\}_i$ is a $(1,\MM)$-homotopy sequence, we define the quantity 
$$
\LL(S)=\limsup_{i\to \infty}\max\{\MM(\phi_i(x)),x\in I(1,k_i)_0\}
$$
Now, if $\Pi\in\pi_1^\#(\boZ_n(M),\MM,\{0\})$ is an equivalence class, we can define the
width associated to $\Pi$ by
$$
W(\Pi)=\inf\{\LL(S),S\in \Pi\}
$$
The class that corresponds to the fundamental class in $H_{n+1}(M)$ by the
Almgren-Pitts isomorphism is denoted $\Pi_M$. If $S=\{\phi_i\}_i\in \Pi_M$, we say
that $S$ is a \emph{discrete sweep-out} of $M$. The width $W(\Pi_M)$ is denoted by $W_M$ and is
called the width of the manifold $M$. 

The theory tells us that there is $S\in\Pi_M$ such that $\LL(S)=W(\Pi_M)=W_M$. If
$S=\{\phi_i\}_i$, we then say that $\phi_{i_j}(x_j)$ is a min-max sequence ($x_j\in
I(1,k_{i_j})$\,) if $\MM(\phi_{i_j}(x_j))\to W_M$. The min-max theorem of the Almgren-Pitts
theory says the following (see \cite{Pit2} for $n\le 5$ and \cite{ScSi} for $n=6$).

\begin{thm}[Pitts \cite{Pit2}, Schoen-Simon \cite{ScSi}]\label{thm:discminmax}
Let $M$ be a closed Riemannian $(n+1)$-manifold ($2\le n\le 6$).
There is a $S=\{\phi_i\}_i\in\Pi_M$ with $\LL(S)=W_M$ and a min-max sequence
$\{\phi_{i_j}(x_j)\}_j$ that converges (in the varifold sense) to an integral varifold
whose support is a finite collection of embedded connected disjoint minimal hypersurfaces
of $M$. As
a consequence 
$$
W_M=\sum_{i=1}^pn_i|S_i|
$$ 
where $\cup_{i=1}^p S_i$ is the support of the limit varifold.
\end{thm}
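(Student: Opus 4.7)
The plan is to follow the Almgren--Pitts combinatorial construction (extended by Schoen--Simon to the case $n=6$) in four stages: produce an optimal sequence, pull-tight to force stationarity in the limit, establish the almost-minimizing property at all small scales, and conclude by regularity.

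First I would fix a sequence $S^{(k)}=\{\phi_i^{(k)}\}\in\Pi_M$ with $\LL(S^{(k)})\to W_M$. Since the equivalence classes in $\pi_1^\#(\boZ_n(M),\MM,\{0\})$ are closed under the operation of passing from one $(1,\MM)$-homotopy sequence to a suitably refined one, a diagonal extraction across refinements produces a single $S=\{\phi_i\}\in\Pi_M$ with $\LL(S)=W_M$. Any min-max sequence $\{\phi_{i_j}(x_j)\}$ with $\MM(\phi_{i_j}(x_j))\to W_M$ then has, after passing to a subsequence, an associated varifold limit $V$ with $\|V\|(M)=W_M$, by compactness of varifolds with bounded mass.

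Second I would apply the pull-tight procedure: to every non-stationary integral varifold $V$ one can continuously assign a smooth vector field $X_V$ on $M$ whose flow decreases $\|V\|(M)$, and by a partition of unity on the space of varifolds one obtains an isotopy that one applies to each slice $\phi_i(x)$. The verifications are that the modified map is still in the same $(1,\MM)$-homotopy class, that $\LL$ does not increase, and that any limit varifold of a min-max sequence of the new $S$ is stationary; this is routine once the continuous dependence of $X_V$ on $V$ is set up.

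Third --- and this is the deep step --- I would prove that some min-max sequence $\phi_{i_j}(x_j)$ is \emph{almost-minimizing} on every sufficiently small annulus. The idea is a combinatorial dichotomy, due to Almgren and carried out carefully by Pitts: if infinitely often one could locally deform $\phi_i(x)$ inside some annulus $A$ through a $(1,\MM)$-homotopy of small fineness to reduce mass by a definite $\eta>0$, one could splice these local reductions across neighboring cells of $I(1,k_i)_0$ into a competitor $\tilde{S}\in\Pi_M$ with $\LL(\tilde S)\le W_M-\eta/2$, contradicting the definition of $W_M$. The difficulty is that the splicing must preserve the fineness bound and the homotopy class, which is the technical heart of \cite{Pit2}; the main obstacle of the whole proof is here.

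Fourth, with the pulled-tight almost-minimizing min-max sequence at hand, I would invoke the regularity theorem for stationary integral varifolds that are almost-minimizing on small annuli: Pitts' regularity for $n\le 5$ and its extension by Schoen--Simon \cite{ScSi} for $n=6$ show that $\mathrm{supp}\,V$ is a smooth embedded minimal hypersurface away from a singular set of Hausdorff dimension at most $n-7$, hence empty in our range $2\le n\le 6$. Decomposing $\mathrm{supp}\,V$ into its finitely many connected components $S_1,\dots,S_p$ and applying the constancy theorem to $V$ on each of them yields integer multiplicities $n_i$ and the formula $W_M=\sum_{i=1}^p n_i|S_i|$.
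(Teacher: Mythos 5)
This theorem is not proved in the paper at all: it is quoted as a black box from Pitts \cite{Pit2} (for $n\le 5$) and Schoen--Simon \cite{ScSi} (for $n=6$), so there is no internal proof to compare against. Your four-stage outline (critical sequence via diagonal refinement, pull-tight to stationarity, the combinatorial almost-minimizing dichotomy, then regularity plus the constancy theorem to get $W_M=\sum n_i|S_i|$) is an accurate description of the standard Almgren--Pitts--Schoen--Simon architecture, and you correctly locate the two genuinely hard steps --- the fineness-preserving splicing that yields the almost-minimizing property, and the regularity theory for almost-minimizing stationary varifolds --- in the cited references rather than claiming to have carried them out. As a self-contained proof it is of course only a sketch at precisely those two points, but since the paper itself defers the entire statement to \cite{Pit2} and \cite{ScSi}, your treatment is at least as complete as the paper's and consistent with it.
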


A limit varifold as in the above theorem will be called a min-max varifold associated to
the fundamental class of $H_{n+1}(M)$ and by extension we say that its support is a min-max
minimal hypersurface associated to the fundamental class of $H_{n+1}(M)$.

\begin{remarq}
In \cite{Zho}, Zhou gives some precisions about the multiplicities that appear in the above theorem. He proved that if $S_i$ is a non orientable minimal hypersurface then its
multiplicity $n_i$ has to be even (Proposition 6.1 in \cite{Zho}).
\end{remarq}

%%%%%%%%%%

\subsection{From continuous to discrete}

It is easy to construct a continuous sweep-out of a manifold : we can just look at the
level sets of a Morse function on the manifold $M$. The construction of a discrete
sweep-out is not as clear even if the Almgren-Pitts isomorphism tells us that they exist.

In order to make a link between continuous and discrete sweep-outs, we use the following
result (see Theorem 13.1 in \cite{MaNe} and Theorem 5.5 in \cite{Zho}).

\begin{thm}\label{thm:conttodisc}
Let $\{\Ome_t\}_{t\in[a,b]}$ be a family of open subsets of $M$ satisfying $(sw2)$, $(sw3)$ and 
\begin{itemize}
\item $\Phi(t)=\partial[\Ome_t]\in\boZ_n(M)$;
\item $\sup\,\{\MM(\Phi(t)),t\in[a,b]\}<+\infty$
\item $\bfm(\Phi, r)=\sup\{\|\Phi(t)\|B(p,r),p\in M\text{ and }t\in[a,b]\}\to 0$ as $r\to
0$ where $B(p,r)$ is the geodesic ball of $M$ of center $p$ and radius $r$ and
$\|\cdot\|$ denote the Radon measure on $M$ associated to a current. 
\end{itemize}

Then there is a $(1,\MM)$-homotopy sequence $S\in\Pi_M$ such that 
$$
\LL(S)\le \sup\,\{\MM(\Phi(t)),t\in[a,b]\}
$$
\end{thm}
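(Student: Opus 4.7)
The plan is to discretize the continuous family $\{\Ome_t\}$ into a sequence of maps $\phi_i : I(1,k_i)_0 \to \boZ_n(M)$ of ever smaller fineness, by first sampling $\Phi$ at the vertices of finer and finer cellulations of $[a,b]$ and then using an Almgren-type interpolation to upgrade control in the flat norm to control in the mass norm $\MM$. This is exactly the kind of discretization carried out by Marques--Neves and Zhou in the results cited after the statement, and the hypotheses of the theorem (uniform mass bound, no concentration of mass) are tailored to make their machinery run.

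First I would fix a large $j$ and sample $\Phi$ at the $0$-cells of $I(1,j)$, that is, at the points $t_\ell = a + \ell(b-a)/3^j$ for $\ell = 0,\dots,3^j$. Condition (sw2) gives $\boH^{n+1}(\Ome_{t_\ell}\triangle\Ome_{t_{\ell+1}})\to 0$ uniformly as $j\to \infty$, hence the flat distance between consecutive $\Phi(t_\ell)$ and $\Phi(t_{\ell+1})$ tends to zero. The difficulty is that fineness is measured in $\MM$, which is strictly stronger than the flat norm; naive sampling cannot give a controlled $\bff(\phi_i)$.

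The heart of the argument is an interpolation lemma (Almgren's discretization theorem, in the form of Theorem 13.1 in \cite{MaNe} and Theorem 5.5 in \cite{Zho}): given two integer rectifiable cycles $T_0,T_1 \in \boZ_n(M)$ whose difference has small flat norm and whose individual masses do not concentrate in small balls, one can produce a finite sequence $T_0 = U_0, U_1,\dots, U_N = T_1$ in $\boZ_n(M)$ with $\MM(U_{q+1}-U_q)$ arbitrarily small and each $\MM(U_q)$ close to $\max(\MM(T_0),\MM(T_1))$. The construction is local: one covers $M$ by finitely many small balls $B(p,r)$ on which $\|\Phi(t)\|B(p,r)$ is uniformly small (this is precisely the content of the hypothesis $\bfm(\Phi,r)\to 0$), uses the isoperimetric inequality in $\boZ_n(B(p,r))$ to cobord $T_0\mathbin{\llcorner}B(p,r)$ to $T_1\mathbin{\llcorner}B(p,r)$ by a short path in mass, and glues these local paths together, ball by ball. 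Applying this interpolation on each segment $[t_\ell,t_{\ell+1}]$, one obtains $\phi_j : I(1,k_j)_0 \to \boZ_n(M)$ with fineness $\delta_j \to 0$ and $\sup_x \MM(\phi_j(x)) \le \sup_t \MM(\Phi(t)) + \eps_j$, $\eps_j \to 0$. The endpoint condition $\phi_j(I_0(1,k_j)) = \{0\}$ follows from $\Ome_a = \emptyset$ and $\Ome_b = M$.

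To conclude, consecutive maps $\phi_j, \phi_{j+1}$ are $1$-homotopic in $(\boZ_n(M),\{0\})$ with fineness $\to 0$ by the same interpolation applied in a two-parameter version (filling $I(2,k_3)_0$), so $S=\{\phi_j\}_j$ is a $(1,\MM)$-homotopy sequence. That $S \in \Pi_M$ is then a direct check: under the Almgren--Pitts isomorphism $\pi_1^\#(\boZ_n(M),\MM,\{0\}) \cong H_{n+1}(M,\Z)$, a discrete sweep-out built from a family of open sets interpolating between $\emptyset$ and $M$ represents the fundamental class. Finally, $\LL(S) = \limsup_j \max_x \MM(\phi_j(x)) \le \sup_t \MM(\Phi(t))$. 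The main obstacle, as always in this sort of statement, is the interpolation step: it requires delicate surgeries to stay within integer rectifiable cycles while keeping mass under control, and it is precisely the no-concentration hypothesis $\bfm(\Phi,r)\to 0$ that makes the local cut-and-paste sum up to a globally mass-bounded interpolation.
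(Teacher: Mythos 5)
Your outline is correct and follows exactly the strategy of the result the paper invokes: the paper does not prove Theorem~\ref{thm:conttodisc} itself but cites Theorem 13.1 of Marques--Neves and Theorem 5.5 of Zhou, whose proof is precisely the sampling-plus-Almgren-interpolation argument you describe, with the no-concentration hypothesis $\bfm(\Phi,r)\to 0$ powering the local mass control and the Almgren--Pitts isomorphism identifying the resulting discrete sweep-out with the fundamental class. Nothing to correct.
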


\begin{remarq}\label{rmq:masscontrol}
Actually, the estimate on $\LL(S)$ comes from a much stronger property of the
construction. Let $\widetilde\Phi(t)=\Phi(a+t(b-a))$. The $(1,\MM)$ homotopy sequence
$S=\{\phi_i\}_{i\in\N}$ has the following property: there are sequences $\delta_i\to 0$
and $l_i\to \infty$ such that
\begin{equation}\label{eq:masscontrol}
\MM(\phi_i(x))\le \sup\{\MM (\widetilde\Phi(y)),\ x,y\in\alpha \textrm{ for
some }1\textrm{-cell }\alpha\in I(1,l_i)\}+\delta_i
\end{equation}

Another property of $S$ is that $\boF(\phi_i(x)-\widetilde\Phi(x))\le \delta_i$ for any
$x\in I(1,k_i)_0$ where $\boF$ is the flat norm on the space of currents and
$\phi_i:I(1,k_i)_0\to \boZ_n(M)$.
\end{remarq}

\begin{remarq}\label{rmq:noconcentr}
The hypothesis about $\bfm(\Phi,r)$ is a no concentration property of the family
$\{\Phi(t)\}_t$.
Actually, the above theorem is used to produce discrete sweep-outs from continuous ones.
This can be done since the hypotheses on $\bfm(\Phi,r)$ is satisfied if
$\Phi(t)=[\Gamma_t]$ where $\{\Gamma_t\}_t$ is a continuous sweep-out (see Proposition~5.1
in \cite{Zho}).
\end{remarq}

%%%%%%%%%%
%%%%%%%%%%

\section{Stable minimal hypersurfaces}\label{sec:minstable}

Among all minimal hypersurfaces, the stable ones play an important role since they appear
when certain minimization arguments are done among some class of hypersurfaces. As a
consequence, they are natural candidates for a minimal hypersurface with least area.

In this section, we study these minimization arguments and look at a stable minimal
hypersurface with least area.

%%%%%%%%%%

\subsection{Non separating hypersurfaces}

We first look at hypersurfaces that do not separate $M$ in two connected components.

\begin{prop}\label{prop:miniorient}
Let $M$ be a compact Riemannian $(n+1)$-manifold ($2\le n\le 6$) with mean-convex boundary.
Let $\Sigma$ be an oriented hypersurface in $M$ that is not homologous to $0$. Then there
is a connected orientable stable minimal hypersurface $\Sigma'$ which is non-vanishing in homology
and such that $|\Sigma'|\le |\Sigma|$. Moreover, if $\Sigma$ is not a stable minimal
hypersurface then $|\Sigma'|<|\Sigma|$.
\end{prop}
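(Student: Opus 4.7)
The plan is to carry out the classical minimization of area in the integral homology class of $\Sigma$, and then extract a well-chosen connected component.

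First, since $\Sigma$ is oriented, it defines an integral homology class $[\Sigma]\in H_n(M,\Z)$ which, by hypothesis, is nonzero. Using Federer--Fleming compactness for integral currents, I would pick a mass-minimizing sequence in the class $[\Sigma]$ and extract a weak limit $T\in\boZ_n(M)$ with $\MM(T)\le|\Sigma|$ and $[T]=[\Sigma]$. By the regularity theory of codimension-one area-minimizing integral currents (Federer), valid as long as $n\le 6$, the support of $T$ is a smooth embedded minimal hypersurface, and $T$ can be written as $T=\sum_i n_i[\Sigma'_i]$ with $\Sigma'_i$ pairwise disjoint connected oriented embedded minimal hypersurfaces and $n_i\in\Z\setminus\{0\}$.

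Next I need to confirm that $T$ is really an interior object. The mean-convexity of $\partial M$ together with the strong maximum principle (of Solomon--White type) rules out an interior tangency between $\mathrm{supp}(T)$ and $\partial M$: if a component of $\mathrm{supp}(T)$ touched $\partial M$ from the inside, it would have to coincide with a boundary component, and one can then decrease mass by pushing it slightly inward using the mean-convex normal, contradicting minimality. So $\mathrm{supp}(T)\subset \Int M$.

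Now I would decompose $T=\sum n_i[\Sigma'_i]$ and observe that $[\Sigma]=[T]=\sum n_i[\Sigma'_i]\neq 0$ in $H_n(M,\Z)$, so at least one index $i_0$ satisfies $[\Sigma'_{i_0}]\neq 0$. Set $\Sigma':=\Sigma'_{i_0}$. Since each $\Sigma'_i$ is a connected component of a mass-minimizer in its own homology class, $\Sigma'$ is itself area-minimizing in $[\Sigma']$ and in particular stable. By construction $\Sigma'$ is connected, orientable, and nonzero in homology. Finally, the disjointness of the components gives
\[
|\Sigma'|\le \sum_i n_i |\Sigma'_i|=\MM(T)\le |\Sigma|.
\]

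For the strict-inequality clause, I would argue by contraposition. Assume $|\Sigma'|=|\Sigma|$. Then both inequalities in the display above are equalities. The first forces $T=[\Sigma']$ (single component, multiplicity one), and the second forces $\Sigma$ itself to be a mass-minimizer in its homology class. In particular, the smooth hypersurface $\Sigma$ is area-minimizing on compact variations, hence a stable minimal hypersurface. Thus if $\Sigma$ is not a stable minimal hypersurface, we must have $|\Sigma'|<|\Sigma|$.

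The only real subtlety is the boundary step: one must be sure that the minimization does not let the support of $T$ run off into $\partial M$. This is the place where the mean-convexity hypothesis is essential, via the boundary maximum principle; the rest is a direct application of GMT existence and regularity and the standard fact that area-minimizers in their homology class are stable.
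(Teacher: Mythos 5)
Your proposal is correct and follows essentially the same route as the paper: minimize mass among integral cycles in $[\Sigma]$ (Federer 5.1.6), invoke codimension-one regularity for $n\le 6$, and select a component that is nonzero in homology; the mean-convex boundary keeps the minimizer in the interior, and the strict-inequality clause is just the contrapositive of the paper's direct argument. (Only nitpick: you don't need each component to be area-minimizing in its own class — stability, which follows from second-variation nonnegativity of compactly supported normal perturbations of the minimizer, is all the statement requires.)
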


Typically, this proposition will be applied to non separating hypersurfaces.

\begin{proof}
$\Sigma$ represents a non vanishing homology class in $H_n(M,\Z)$. In terms of geometric
measure theory, $\Sigma$ can be seen as an integral $n$-cycle $[\Sigma]$. We can then
minimize the mass among all integral cycles in the homology class
of $[\Sigma]$ (see 5.1.6 in \cite{Fed}). This produces an integral cycle homologous to
$[\Sigma]$ whose support is made of several smooth connected orientable stable minimal
hypersurfaces (see 5.4.15 in \cite{Fed} or \cite{Sim}). Since $[\Sigma]\neq 0$, there is
one connected component $\Sigma'$ of this support that does not vanish in homology, this
component satisfies the properties of the above proposition.

If $\Sigma$ is not a stable minimal hypersurface, it is clear that there are hypersurfaces
homologous to $\Sigma$ with area strictly less that $|\Sigma|$ ; so $|\Sigma'|<|\Sigma|$.
\end{proof}

Let us fix a definition.

\begin{defn}
Let $N$ and $M$ be two $n$-manifolds with boundary and $\phi:N\to M$ a smooth map.
$\phi$ is said to be \emph{locally invertible} if, for any point $p$ in $N$, $d\phi(p)$ is
invertible and there is a neighborhood $V$ of $p$ in $N$ such that $\phi$ is bijective from $V$
to $\phi(V)$ with smooth inverse map.
\end{defn}

This definition mainly deals with properties of the map at boundary points of $N$ : for
example, boundary points of $N$ are not necessarily sent to boundary points of $M$. The inclusion
$[-1,1]\hookrightarrow [-2,2]$ is locally invertible, the map $[-\pi,\pi]\to \S^1;
t\mapsto (\cos t,\sin t)$ is also locally invertible.

\begin{prop}\label{prop:openorient}
Let $\Sigma$ be a connected closed oriented non separating hypersurface in the interior of a
manifold $M$ with boundary.
Then there is a manifold $\widetilde M$ with boundary with two particular boundary components
$\Sigma_1$ and $\Sigma_2$ and a locally invertible smooth map $\phi :\widetilde M\to M$
such that $\phi:\widetilde M 
\setminus (\Sigma_1\cup \Sigma_2)\to M\setminus \Sigma$ is a diffeomorphism and for
$i=1,2$ $\phi:\Sigma_i\to \Sigma$ is a diffeomorphism.
\end{prop}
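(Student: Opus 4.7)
The plan is to construct $\widetilde M$ by cutting $M$ along $\Sigma$ and then smoothly regluing using a tubular neighborhood. Since $\Sigma$ is oriented and embedded in the oriented manifold $M$, its normal bundle is trivial, so there is a tubular neighborhood $N \subset M$ of $\Sigma$ together with a diffeomorphism $\Psi : \Sigma \times (-1,1) \to N$ restricting to the identity on $\Sigma \times \{0\} = \Sigma$. Write $N^+ = \Psi(\Sigma \times [0,1))$ and $N^- = \Psi(\Sigma \times (-1,0])$ for the two half-collars.

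Next, I would define $\widetilde M$ abstractly as the space obtained from the disjoint union
$$
(M \setminus \Sigma) \;\sqcup\; (\Sigma \times [0,1)) \;\sqcup\; (\Sigma \times (-1,0])
$$
by identifying $\Sigma \times (0,1)$ in the second piece with $\Psi(\Sigma \times (0,1)) \subset M \setminus \Sigma$ via $\Psi$, and $\Sigma \times (-1,0)$ in the third piece with $\Psi(\Sigma \times (-1,0))$ likewise. These identifications are along open subsets via smooth diffeomorphisms, so $\widetilde M$ inherits a natural structure of smooth manifold with boundary; its boundary consists of any boundary components of $M$ together with the two copies $\Sigma_1 = \Sigma \times \{0\}$ and $\Sigma_2 = \Sigma \times \{0\}$ of $\Sigma$ coming from the two collar halves. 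The map $\phi: \widetilde M \to M$ is defined as the identity on $M \setminus \Sigma$ and as $\Psi$ on each half-collar; it is smooth and well-defined by construction of the gluing.

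To verify local invertibility, at a point of $M \setminus \Sigma$ the map $\phi$ is the identity, and near a point of $\Sigma_i$ it agrees in local coordinates with the inclusion $\Sigma \times [0,1) \hookrightarrow \Sigma \times (-1,1)$ (or the analogous one for $(-1,0]$) post-composed with $\Psi$, which is smooth with invertible differential and injective on a small neighborhood. The two remaining conditions of the statement are then built in: $\phi$ restricts to the identity diffeomorphism $\widetilde M \setminus (\Sigma_1 \cup \Sigma_2) \to M \setminus \Sigma$, and each $\phi|_{\Sigma_i}$ is the tautological identification of $\Sigma \times \{0\}$ with $\Sigma \subset M$.

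The only delicate point is the role of the non-separating hypothesis. The construction above is purely local around $\Sigma$ and works whether or not $\Sigma$ separates $M$; but if $\Sigma$ were separating, $M \setminus \Sigma$ would have two components and $\widetilde M$ would split accordingly, with $\Sigma_1$ and $\Sigma_2$ lying in different pieces. Non-separation is equivalent to connectedness of $M \setminus \Sigma$, which is precisely what glues the two half-collars together into a single connected component of $\widetilde M$ carrying both $\Sigma_1$ and $\Sigma_2$. This is essentially the only place where the hypothesis enters; everything else reduces to routine checks inside the tubular neighborhood.
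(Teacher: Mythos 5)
Your construction is correct and is essentially the same as the paper's: both build $\widetilde M$ by gluing the complement of (a piece of) $\Sigma$ to two half-collars $\Sigma\times[0,\cdot)$ and $\Sigma\times(\cdot,0]$ along open overlaps via the tubular-neighborhood diffeomorphism, and define $\phi$ as the identity off the collar and as the collar map on it (the paper removes a closed collar and glues along the outer annular regions, a cosmetic difference). Your closing remark on where the non-separating hypothesis enters is a correct observation, though the paper does not even require it for this proposition.
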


\begin{proof}
Let us fix some complete Riemannian metric on $M$. Let $\nu$ be some unit normal vector field
along $\Sigma$. The map $\Phi : \Sigma\times(-2\eps,2\eps) \to M; (p,t)\mapsto
\exp_p(t\nu(p))$ is a diffeomorphism on its image for small $\eps$. Let $\eps$ be so. Let
$M_\eps$ be $M\setminus \Phi(\Sigma\times[-\eps,\eps])$. We then define $\widetilde M$ as the
quotient of the disjoint union of $M_\eps$, $\Sigma\times[0,2\eps)$ and
$\Sigma\times(-2\eps,0]$ by the identifications $(p,t)\simeq \Phi(p,t)\in M_\eps$ for
$(p,t)$ in $\Sigma\times(-2\eps,-\eps)$ or $\Sigma\times(\eps,2\eps)$.

The map $\phi$ is then defined as the identity on $M_\eps$ and by $\Phi$ on
$\Sigma\times(-2\eps,0]$ and $\Sigma\times[0,2\eps)$. $\Sigma_1$ and $\Sigma_2$ are the
two copies of $\Sigma\times\{0\}$. The map $\phi$ clearly satisfies the expected
properties.
\end{proof}

In the following, we will say that $\widetilde M$ is obtained by opening $M$ along
$\Sigma$. In general, there will be a metric on $M$ so we always lift this metric to
$\widetilde M$ so that $\phi$ is a local isometry.

%%%%%%%%%%

\subsection{Non orientable hypersurfaces}

In this section, we look at the area of non orientable minimal hypersurfaces in $M$.

\begin{prop}\label{prop:mininonorient}
Let $M$ be a closed orientable Riemannian $(n+1)$-manifold ($2\le n\le 6$) with
mean-convex boundary. Let $\Sigma$ be a
non-orientable hypersurface in $M$. Then there is a connected stable minimal
hypersurface $\Sigma'$ such that $|\Sigma'|\le | \Sigma|$. Moreover, if $\Sigma$ is
not a stable minimal hypersurface then $|\Sigma'|<|\Sigma|$.
\end{prop}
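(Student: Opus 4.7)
The idea is to mimic the argument of Proposition~\ref{prop:miniorient} with $\Z/2\Z$ coefficients, which is the natural setting when $\Sigma$ is non-orientable. Because $M$ is orientable while $\Sigma$ is not, the normal bundle of $\Sigma$ in $M$ cannot be trivial (otherwise a global unit normal combined with the orientation of $M$ would orient $\Sigma$). Hence $\Sigma$ is one-sided in $M$, so $M\setminus\Sigma$ is connected, and equivalently $[\Sigma]$ is a non-vanishing class in $H_n(M,\Z/2\Z)$.

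Viewing $\Sigma$ as a mod~$2$ integral $n$-cycle, one then minimizes the mass among all mod~$2$ integral $n$-cycles in the $\Z/2\Z$-homology class of $[\Sigma]$, i.e.\ the mod~$2$ analogue of 5.1.6 in \cite{Fed}. Existence of a minimizer $T$ follows from compactness and lower semi-continuity of mass; the mean-convex hypothesis on $\partial M$ keeps $T$ in the interior of $M$, exactly as in Proposition~\ref{prop:miniorient}. The corresponding regularity theory for mod~$2$ mass-minimizing currents (valid in the range $2\le n\le 6$) implies that the support of $T$ is a finite disjoint union of smooth connected embedded minimal hypersurfaces of $M$, each appearing with mod~$2$ multiplicity one and possibly non-orientable. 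Since any smooth compactly supported normal variation of a single component of $\mathrm{supp}(T)$ produces another mod~$2$ cycle in the same class, each component is individually area-minimizing in a neighborhood, and in particular stable. Setting $\Sigma'$ to be any connected component of $\mathrm{supp}(T)$ gives a connected stable minimal hypersurface with $|\Sigma'|\le\MM(T)\le|\Sigma|$.

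For the strict inequality, if $\Sigma$ is not already a stable minimal hypersurface, there is a small compactly supported normal variation $\{\Sigma_s\}$ of $\Sigma$ with $|\Sigma_s|<|\Sigma|$ for small $s\neq 0$: in the non-minimal case one pushes along the mean curvature vector, and in the minimal-but-unstable case one uses (the projection to $N\Sigma$ of) a $\sigma$-odd negative eigenfunction of $\boL_{\wtilde\Sigma}$ as in Section~\ref{sec:minimal}. Such a variation is an isotopy and so $\Sigma_s$ lies in the same $\Z/2\Z$-homology class as $\Sigma$, which forces $\MM(T)<|\Sigma|$ and hence $|\Sigma'|<|\Sigma|$ for every connected component.

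\textbf{Main obstacle.} The most delicate technical point is not $\Z/2\Z$ minimization itself -- that is classical -- but rather verifying that \emph{each individual component} of $\mathrm{supp}(T)$ is stable, not just that the union is globally minimizing. The localization argument sketched above handles this, and relies crucially on the fact that the mod~$2$ multiplicity of every smooth point is one, so that a compactly supported variation of a single component genuinely produces a competing mod~$2$ cycle.
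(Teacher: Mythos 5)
Your proposal follows essentially the same route as the paper: view $\Sigma$ as a non-vanishing flat chain modulo $2$ (using one-sidedness, which follows from the orientability of $M$), minimize mass in its $\Z/2\Z$-homology class, invoke the regularity theory for mod~$2$ mass-minimizing currents to get smooth stable components, and obtain the strict inequality from the existence of an area-decreasing variation when $\Sigma$ is not a stable minimal hypersurface. The extra details you supply (localization for stability of each component, the explicit variations for strict inequality) are consistent with, and merely flesh out, the paper's argument.
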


\begin{proof}
Since $M$ is orientable and $\Sigma$ is non-orientable, $\Sigma$ is not $2$-sided. Thus
$\Sigma$ represents a non vanishing element in $H_n(M,\Z/2\Z)$. In the geometric measure
theory setting, $\Sigma$ can also be seen as a flat chain modulo $2$ $[\Sigma]$ (see 4.2.26. in
\cite{Fed}). We can then minimize the mass among all flat chains modulo $2$ that
are homologous to $[\Sigma]$. We then get a flat chain $T$ modulo $2$ which is homologous to
$[\Sigma]$ and minimizes the mass. The support of $T$ is then made of a finite union of
disjoint smooth minimal hypersurfaces (the regularity theory for area-minimizing flat
chains modulo $2$ can be found in \cite{Mor2} Corollary 2.5 and Remark 1 ; it uses also Lemma
4.2 in \cite{Mor3}). Let $\Sigma'$ be one of these minimal hypersurfaces ; it could be
orientable or not but in both cases the area-minimizing property of $T$ implies that
$\Sigma'$ is stable.

If $\Sigma$ is not a stable minimal hypersurface, it is clear that there is a hypersurface
homologous to $\Sigma$ with area strictly less that $|\Sigma|$ ; so $|\Sigma'|<|\Sigma|$.
\end{proof}

As in the preceding section, we can open a manifold along a non orientable hypersurface.

\begin{prop}\label{prop:opennonorient}
Let $\Sigma$ be a connected closed non-orientable hypersurface in the interior of a manifold $M$ with
boundary. Then there is a manifold $\widetilde M$ with boundary with a particular boundary
component $\widetilde \Sigma$ and a locally invertible smooth map $\phi :\widetilde M\to
M$ such that $\phi:\wtilde M\setminus \wtilde \Sigma\to M\setminus \Sigma$ is a
diffeomorphism and $\phi:\widetilde \Sigma\to \Sigma$ is an orientable double cover of
$\Sigma$. 
\end{prop}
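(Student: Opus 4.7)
The strategy is to mimic the proof of Proposition~\ref{prop:openorient}, but to build the tubular neighborhood of $\Sigma$ using the orientation double cover rather than $\Sigma$ itself. The key fact driving the construction is that since $M$ is orientable and $\Sigma$ is non-orientable, the normal line bundle $N\Sigma$ of $\Sigma$ in $M$ is isomorphic to the orientation line bundle of $\Sigma$, hence non-trivial, but its pullback to the orientable double cover $\widetilde{\Sigma}$ is trivial.

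First I would fix a complete Riemannian metric on $M$, let $\pi:\widetilde{\Sigma}\to \Sigma$ be the orientable double cover with deck involution $\sigma$, and choose a global unit section $\nu:\widetilde{\Sigma}\to TM$ of the pulled-back normal bundle, which exists by the discussion above and which satisfies $\nu\circ\sigma=-\nu$. Then for $\eps>0$ small enough, I define
\[
\Phi:\widetilde{\Sigma}\times(-2\eps,2\eps)\to M,\qquad (p,t)\mapsto \exp_{\pi(p)}(t\nu(p)).
\]
A standard argument using the inverse function theorem and the compactness of $\Sigma$ shows that, for $\eps$ small, $\Phi$ is a local diffeomorphism; and the relation $\nu\circ\sigma=-\nu$ gives the equivariance $\Phi(p,t)=\Phi(\sigma(p),-t)$, so $\Phi$ descends to a diffeomorphism of $\bigl(\widetilde{\Sigma}\times(-2\eps,2\eps)\bigr)/\bigl((p,t)\sim(\sigma(p),-t)\bigr)$ onto a tubular neighborhood of $\Sigma$ in $M$. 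In particular the restriction of $\Phi$ to $\widetilde{\Sigma}\times(0,2\eps)$ is an embedding, with image exactly $\Phi(\widetilde{\Sigma}\times(-2\eps,2\eps))\setminus \Sigma$.

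Next I would define $\widetilde{M}$ by gluing. Set $M_0=M\setminus \Sigma$ and
\[
\widetilde{M}=\bigl(M_0\sqcup \widetilde{\Sigma}\times[0,2\eps)\bigr)/\sim,
\]
where $(p,t)\sim \Phi(p,t)$ for every $(p,t)\in\widetilde{\Sigma}\times(0,2\eps)$. Because $\Phi|_{\widetilde{\Sigma}\times(0,2\eps)}$ is a diffeomorphism onto an open subset of $M_0$, this is a gluing along an open set via a diffeomorphism, so $\widetilde{M}$ inherits a smooth manifold structure with a single new boundary component $\widetilde{\Sigma}\times\{0\}\cong\widetilde{\Sigma}$ (in addition to any preexisting boundary of $M$, which sits inside $M_0$). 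The map $\phi:\widetilde{M}\to M$ is then declared to be the inclusion on $M_0$ and $\Phi$ on $\widetilde{\Sigma}\times[0,2\eps)$; the identification was designed precisely so that these two recipes agree on the overlap, making $\phi$ well defined and smooth.

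Finally I would check the three required properties. Local invertibility of $\phi$ is clear on $M_0$, and on $\widetilde{\Sigma}\times[0,2\eps)$ it follows from $\Phi$ being a local diffeomorphism (at boundary points $(p,0)$ one simply extends slightly through $t=0$ using $\Phi$ defined on $(-2\eps,2\eps)$). The complement of $\widetilde{\Sigma}$ in $\widetilde{M}$ is tautologically identified with $M_0=M\setminus\Sigma$, so $\phi$ restricts to a diffeomorphism there. On the new boundary, $\phi(p,0)=\exp_{\pi(p)}(0)=\pi(p)$, so $\phi|_{\widetilde{\Sigma}}=\pi$ is the orientation double cover, as required. The only step that requires a little care is the smoothness of the manifold structure at $\widetilde{\Sigma}\times\{0\}$; this is the part where one really uses that $\Phi$ is a local diffeomorphism (not merely continuous) and that the gluing is by a smooth map on a full collar. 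I do not anticipate any genuine obstacle here, only bookkeeping.
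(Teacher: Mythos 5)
Your proposal is correct and follows essentially the same route as the paper: pull back the (non-trivial) normal bundle to the orientation double cover, take the odd unit normal field $\nu\circ\sigma=-\nu$, use the exponential map to build a collar $\widetilde\Sigma\times[0,2\eps)$, and glue it to the complement along the punctured tubular neighborhood. The only cosmetic difference is that the paper glues the collar to $M\setminus\Phi(\widetilde\Sigma\times[0,\eps])$ over the band $(\eps,2\eps)$ rather than to $M\setminus\Sigma$ over all of $(0,2\eps)$, which sidesteps the (harmless) Hausdorffness check near $t=0$ that your version requires.
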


The proof is similar to the orientable case (Proposition~\ref{prop:openorient}, see also
Proposition~3.7 in \cite{Zho}).

\begin{proof}
As in the preceding subsection, we consider a complete metric on $M$. Let $\pi:\widetilde
\Sigma\to \Sigma$ be an orientable double cover of $\Sigma$ and let $\sigma$ be the non
trivial deck transformation of $\pi$. $\pi$ defines an immersion of
$\widetilde\Sigma$ to $M$ so we can consider $\nu$ a unit normal vector field along
$\widetilde \Sigma$ we have $\nu(\sigma(p))=-\nu(p)$. Let us consider the map
$\Phi:\widetilde\Sigma\times[0,2\eps)\to M:(p,t)\mapsto \exp_{\pi(p)}(t\nu(p))$. We can
chose $\eps$ so that $\Phi$ is a diffeomorphism from $\widetilde\Sigma\times(0,2\eps)$ to
a tubular $2\eps$-neighborhood of $\Sigma$ with $\Sigma$ removed. Let $M_\eps$ be
$M\setminus \Phi(\widetilde\Sigma\times[0,\eps])$. We then define $\widetilde M$ as the
quotient of the disjoint union of $M_\eps$ and $\widetilde \Sigma\times[0,2\eps)$ by the
identifications $(p,t)\simeq \Phi(p,t)\in M_\eps$ for $(p,t)$ in
$\widetilde\Sigma\times(\eps,2\eps)$.

The map $\phi$ is then defined as the identity on $M_\eps$ and by $\Phi$ on
$\widetilde\Sigma\times[0,2\eps)$. The map $\phi$ clearly satisfies the expected
properties.
\end{proof}

As an example, if $M$ is $\R P^3$ and $\Sigma$ is an equatorial $\R P^2$ then
$\widetilde M$ is a hemisphere of $\S^3$ bounded by an equator $\widetilde\Sigma$.

%%%%%%%%%%

\subsection{The number $\boA_\boS$}

Let $M$ be a compact orientable Riemannian $(n+1)$-manifold with mean convex boundary
($2\le n\le 6$). If $M$ contains a non orientable
or non separating hypersurface then Propositions~\ref{prop:miniorient} and
\ref{prop:mininonorient} give the existence of some stable minimal hypersurface in $M$.
So let us assume that $M$ contains some stable minimal hypersurface, we define $\boO_\boS$ the
collection of connected orientable stable minimal hypersurfaces and $\boU_\boS$ the
collection of the connected non
orientable stable minimal hypersurfaces. We then define
$$
\boA_\boS(M)=\inf (\{|\Sigma|,\Sigma\in\boO_\boS\}\cup\{2|\Sigma|, \Sigma\in\boU_\boS\})
$$
This number is the "least area" of stable minimal hypersurfaces in $M$. If
$\boO_\boS\cup\boU_\boS=\emptyset$, $\boA_\boS(M)=+\infty$.

The main result of this section is that this number is realized.

\begin{prop}\label{prop:ministable}
The number $\boA_\boS(M)$ is realized if it is finite: either there exists $\Sigma\in
\boO_\boS$ such that $|\Sigma|=\boA_\boS(M)$ or $\Sigma\in \boU_\boS$ such that
$2|\Sigma|=\boA_\boS(M)$.
\end{prop}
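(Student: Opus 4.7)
The plan is to take a minimizing sequence and extract a varifold limit using compactness for stable minimal hypersurfaces. Choose $(\Sigma_k)_{k\in\N}$ in $\boO_\boS\cup\boU_\boS$ with $a_k:=|\Sigma_k|$ if $\Sigma_k\in\boO_\boS$ and $a_k:=2|\Sigma_k|$ if $\Sigma_k\in\boU_\boS$, chosen so that $a_k\to\boA_\boS(M)$. Passing to a subsequence we may assume either \emph{Case~(A)} every $\Sigma_k$ is orientable, or \emph{Case~(B)} every $\Sigma_k$ is non orientable; in both cases $|\Sigma_k|$ is uniformly bounded. Because $2\le n\le 6$ and each $\Sigma_k$ is stable, the Schoen--Simon curvature estimates, combined with a standard diagonal argument, yield a subsequence converging in the varifold sense to an integral varifold $V$ whose support is a finite disjoint union of smooth connected embedded stable minimal hypersurfaces $S_1,\dots,S_p$ with positive integer multiplicities $n_1,\dots,n_p$, so $|V|=\sum_i n_i|S_i|$. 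Mean-convexity of $\partial M$ together with the maximum principle forces each $S_i$ into the interior of $M$, and the monotonicity formula prevents $V$ from being zero.

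The proof then reduces to arithmetic on $|V|=\sum_i n_i|S_i|$, which equals $\boA_\boS(M)$ in Case~(A) and $\boA_\boS(M)/2$ in Case~(B). The nontrivial input used in Case~(A) is a parity property: if a sequence of two-sided (hence orientable, since $M$ is orientable) embedded minimal hypersurfaces converges as varifolds to a multiple of a one-sided smooth limit $S_i$, then $n_i$ is even. The local reason is that near any point $p\in S_i$ one can pass to the orientation double cover, and the two-sided approximating sheets are paired by the nontrivial deck involution, since a graphical sheet on one local side of $S_i$ must continue around a non orientable loop to a companion sheet on the opposite side. Combining this parity with the defining lower bounds $|S_i|\ge\boA_\boS(M)$ for orientable $S_i$ and $2|S_i|\ge\boA_\boS(M)$ for non orientable $S_i$ shows that in Case~(A) every summand $n_i|S_i|$ is at least $\boA_\boS(M)$, forcing $p=1$ and one of the two extremal configurations: either $S_1\in\boO_\boS$ with $n_1=1$ and $|S_1|=\boA_\boS(M)$, or $S_1\in\boU_\boS$ with $n_1=2$ and $2|S_1|=\boA_\boS(M)$.

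In Case~(B), since $|V|=\boA_\boS(M)/2$, an orientable $S_i$ would already contribute at least $\boA_\boS(M)>|V|$, so every $S_i$ is non orientable and each term satisfies $n_i|S_i|\ge|S_i|\ge\boA_\boS(M)/2$; equating to $|V|$ forces $p=1$, $n_1=1$, $S_1\in\boU_\boS$, and $2|S_1|=\boA_\boS(M)$. In every case a hypersurface realizing $\boA_\boS(M)$ is produced.

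The main obstacle is the compactness and regularity step: one must invoke the full strength of Schoen--Simon to ensure the varifold limit of a sequence of stable minimal hypersurfaces is supported on a smooth embedded stable minimal hypersurface (this is precisely where the bound $n\le 6$ enters), and one must justify the even-multiplicity property by a local argument at points of a one-sided limit. Once these two inputs are in hand, the remainder of the argument is a short case analysis on the integer linear combination $\sum_i n_i|S_i|$.
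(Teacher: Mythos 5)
Your proof is correct and follows essentially the same route as the paper: take a minimizing sequence, split into the orientable and non-orientable cases, apply Schoen--Simon compactness for stable hypersurfaces, and conclude by arithmetic on the limit's multiplicities. The only difference is one of explicitness: where the paper directly asserts that the limit is a single oriented hypersurface with multiplicity one or a non-oriented one with multiplicity two, you derive this from the even-multiplicity parity at one-sided limits together with the lower bounds $|S_i|\ge\boA_\boS(M)$ (orientable stable) and $2|S_i|\ge\boA_\boS(M)$ (non-orientable), which is a legitimate filling-in of the same argument.
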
	

\begin{proof}
We can assume that there exists a sequence $(\Sigma_n)_{n\in\N}$ in $\boO_\boS$ (or in
$\boU_\boS$) such that $|\Sigma_n|\rightarrow \boA_\boS(M)$ (or $2|\Sigma_n| \to \boA_\boS(M)$).

If the sequence is in $\boO_\boS$, this is a sequence of stable minimal hypersurfaces
whose areas are uniformly bounded. Then we can apply a compactness result (see
\cite{ScSi} or Theorem 1.3 in \cite{DeLTa}) to prove that a subsequence converges in the graphical sense to an oriented
minimal hypersurface $\Sigma$ with multiplicity one or to a non-oriented minimal
hypersurface $\Sigma$ with multiplicity $2$. In the first case $|\Sigma|=\lim
|\Sigma_n|=\boA_\boS(M)$ and moreover $\Sigma$ is stable. In the second case, $\boA_\boS(M)\le
2|\Sigma|=\lim|\Sigma_n|=\boA_\boS(M)$, so $\boA_\boS(M)=2|\Sigma|$ and
Proposition~\ref{prop:mininonorient} implies that $\Sigma$ is stable.

If the sequence is in $\boU_\boS$, we can still apply the compactness result. Indeed, for any
ball $B$ of radius less than the injectivity radius of $M$, $\Sigma_n\cap B$ is orientable
and stable in the $2$-sided sense. In that case, $(\Sigma_n)_n$ converges to a non-oriented
stable minimal hypersurface with multiplicity $1$. We then have $\boA_\boS(M)\le 2|\Sigma|=\lim
2|\Sigma_n|=\boA_\boS(M)$, so $\boA_\boS(M)=2|\Sigma|$.
\end{proof}

%%%%%%%%%%
%%%%%%%%%%

\section{Minimal hypersurfaces with area less than $\boA_\boS(M)$}\label{sec:sweepout}

In this section, we study minimal hypersurfaces whose areas are less than $\boA_\boS(M)$.
Actually we are going to prove that such a minimal hypersurface can be seen as the leaf
of maximal area in some continuous sweep-out of the ambient manifold $M$.

Let $\Sigma$ be a minimal hypersurface in $M$. If $\Sigma$ is oriented and
$|\Sigma|<\boA_\boS(M)$, Proposition~\ref{prop:miniorient} tells us that $\Sigma$ separates
$M$ and it is unstable. If $\Sigma$ is non-orientable, Proposition~\ref{prop:mininonorient}
implies that $2|\Sigma|\ge \boA_\boS(M)$. So we are going to look at orientable, unstable,
separating minimal hypersurfaces.

\begin{prop}\label{prop:sweep}
Let $M$ be a closed orientable Riemannian $(n+1)$-manifold ($2\le n\le 6$). Let $\Sigma$
be a connected oriented minimal hypersurface
which is unstable and $|\Sigma|\le \boA_\boS(M)$. Then there is a continuous
sweep-out $\{\Sigma_t\}_{t\in[-1,1]}$ of $M$ such that $\Sigma_0=\Sigma$,
$\LL(\{\Sigma_t\})=|\Sigma|$ and, for any $\eps>0$, there is $\delta>0$ such that
$|\Sigma_t|\le |\Sigma|-\delta$ if $|t|\ge \eps$.

Moreover, if $u_1$ is the first eigenfunction of the Jacobi operator on $\Sigma$ and $\nu$ is
a unit normal vector field along $\Sigma$, the hypersurface $\Sigma_t$ is given by
$\Phi(\Sigma,t)$ for $t$ close to zero where
$$
\Phi:\Sigma\times\R\to M; (p,t)\mapsto \exp_p(tu_1(p)\nu(p))
$$
\end{prop}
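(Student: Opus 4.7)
The plan is to build the family in three pieces and concatenate: a central piece around $t=0$ coming from the first Jacobi eigenfunction, and two lateral pieces that sweep each side of $\Sigma$ down to the empty set. I first verify the setup. Since $M$ is orientable and $\Sigma\in\boO$, $\Sigma$ is $2$-sided; instability of $\Sigma$ gives a negative first eigenvalue $\lambda_1<0$ of $-\boL_\Sigma$ with a positive first eigenfunction $u_1>0$; and $\Sigma$ must separate $M$, since otherwise Proposition~\ref{prop:miniorient} would produce a connected orientable stable minimal hypersurface $\Sigma'$ with $|\Sigma'|<|\Sigma|\le\boA_\boS(M)$, contradicting the definition of $\boA_\boS(M)$. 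Write $M=M^+\cup\Sigma\cup M^-$ with $\nu$ the unit normal along $\Sigma$ pointing into $M^+$.

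For the central piece, put $\Phi(p,t)=\exp_p(tu_1(p)\nu(p))$ and $\Sigma_t:=\Phi(\Sigma,t)$. Since $u_1$ is smooth and strictly positive there is $\eta>0$ such that $\Phi(\cdot,t)$ is an embedding for $|t|\le\eta$, and each $\Sigma_t$ bounds an open region $\Omega_t\subset M$ varying continuously. Because $\Sigma$ is minimal the first variation of area vanishes at $t=0$, and the second variation equals $Q_\Sigma(u_1,u_1)=\lambda_1\|u_1\|_{L^2}^2<0$, giving the Taylor bound $|\Sigma_t|\le|\Sigma|-ct^2$ for some $c>0$ and all $|t|\le\eta$. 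Differentiating the mean curvature along $\Phi$ yields $H(\Sigma_t)=t\boL_\Sigma u_1+O(t^2)=-t\lambda_1u_1+O(t^2)$, which has the sign of $t$ in the $\nu$-direction; so for $\eta$ sufficiently small, $\Sigma_\eta$ is strictly mean-convex as the boundary of the region $N^+\subset M^+$ it encloses on the side away from $\Sigma$, and symmetrically $\Sigma_{-\eta}\subset M^-$ bounds $N^-$ with strict mean-convexity.

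For the lateral piece on the $M^+$-side, the task is to build a continuous sweep-out of $N^+$ from $\Sigma_\eta$ to the empty set, all of whose leaves have area at most $|\Sigma_\eta|$, which is strictly less than $|\Sigma|-c\eta^2$. My approach is to apply Theorem~\ref{thm:contminmax} on the mean-convex manifold $N^+$: fix a reference sweep-out of $N^+$ (e.g.\ from a Morse function) with homotopy class $\Lambda^+$, and argue by contradiction that $W(\Lambda^+)\le|\Sigma_\eta|$. Were instead $W(\Lambda^+)>|\Sigma_\eta|$, Theorem~\ref{thm:contminmax} would furnish a closed embedded minimal hypersurface $\Sigma^\star=\bigcup_iS_i\subset\mathrm{int}(N^+)\subset M$ with $\sum n_i|S_i|=W(\Lambda^+)$; applying Propositions~\ref{prop:miniorient}--\ref{prop:mininonorient} to each $S_i$ and using $|\Sigma|\le\boA_\boS(M)$ forces each $|S_i|\ge|\Sigma|$ (orientable stable case) or $|S_i|\ge|\Sigma|/2$ with $n_i\ge 2$ (non-orientable case), so in fact $W(\Lambda^+)\ge|\Sigma|$; the claim is then that a further argument, using the strict gap $|\Sigma_\eta|<|\Sigma|$ and the continuity of the reference family, rules out this extremal case. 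Once $W(\Lambda^+)\le|\Sigma_\eta|$ is established, a near-minimizing element of $\Lambda^+$ is the required lateral sweep-out; the treatment of $N^-$ is symmetric.

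Concatenating the lateral sweep-out in $N^-$ (suitably reversed), the central piece on $|t|\le\eta$, and the lateral sweep-out in $N^+$, then reparametrizing to $t\in[-1,1]$, yields the required family $\{\Sigma_t\}$ with $\LL(\{\Sigma_t\})=|\Sigma|$ attained at $t=0$ and the uniform bound $|\Sigma_t|\le|\Sigma|-\delta(\eps)$ for $|t|\ge\eps$. The main obstacle is the lateral step, specifically ruling out $W(\Lambda^+)\ge|\Sigma|$: one must analyse how the weighted sum $\sum n_i|S_i|$ of components of the min-max limit relates to $\boA_\boS(M)$, handling mixed orientable/non-orientable components and arbitrary multiplicities, and exclude degenerate scenarios such as $\Sigma^\star$ sitting on $\partial N^+$ or being a multiple cover of $\Sigma$. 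An alternative route avoiding these subtleties is to replace the min-max construction by the weak (level-set or Brakke) mean curvature flow of the strictly mean-convex $\Sigma_\eta$ in $N^+$: in the dimensions $n\le 6$ considered here this flow is well defined, exhausts $N^+$ in finite time with monotone non-increasing area, and provides the lateral sweep-out directly, at the cost of checking it defines a generalized smooth family in the sense of Section~\ref{sec:minmax} and matches the central piece at $t=\pm\eta$.
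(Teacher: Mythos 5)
Your overall architecture is the same as the paper's: a central piece $\Sigma_t=\Phi(\Sigma,t)$ built from the first eigenfunction, with $|\Sigma_t|\le|\Sigma|-ct^2$ and strictly mean-convex leaves $\Sigma_{\pm\eta}$, glued to lateral sweep-outs of the two mean-convex pieces obtained by a min-max contradiction argument (this is exactly Proposition~\ref{prop:onesidesweep} applied to each side). But the step you yourself flag as ``the main obstacle'' is the actual content of the proof, and your sketch of it does not close. First, your case analysis of the min-max limit $\sum n_i|S_i|$ is incomplete: Propositions~\ref{prop:miniorient} and \ref{prop:mininonorient} cannot be applied to a component $S_i$ that is orientable and \emph{separating} (hence null-homologous in $N^+$), and min-max components are typically unstable, so you cannot invoke $|S_i|\ge\boA_\boS(M)$ for them. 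Second, and more fundamentally, even if you establish $W(\Lambda^+)\ge|\Sigma|$ in the bad case, that is not by itself a contradiction -- there is no a priori upper bound on $W(\Lambda^+)$ below $|\Sigma|$ -- and the ``further argument'' you defer to is precisely the missing idea.

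The paper's resolution is to use the min-max hypersurface $S$ not as something to be bounded below, but as a \emph{barrier}: whatever $S$ is, one cuts or opens $N^+$ along it (Propositions~\ref{prop:openorient} and \ref{prop:opennonorient} in the non-separating and non-orientable cases, or simply the component of $N^+\setminus S$ containing $\Sigma_\eta$ in the separating case) to obtain a compact mean-convex manifold in which the class $[\Sigma_\eta]$ is nonzero. Minimizing area in that class via Proposition~\ref{prop:miniorient} produces a connected orientable stable minimal hypersurface $S'$ with $|S'|<|\Sigma_\eta|<|\Sigma|\le\boA_\boS(M)$; one checks $S'$ is not a boundary copy of $S$ by comparing areas, so $S'$ sits in the interior of $M$ and contradicts the definition of $\boA_\boS(M)$. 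This is the argument your proposal is missing. Your proposed mean-curvature-flow alternative does not avoid the difficulty either: the flow of the mean-convex $\Sigma_\eta$ can converge to a stable minimal hypersurface inside $N^+$ rather than sweeping it out, which is exactly the same obstruction, and in addition the level-set flow is not known to produce a generalized smooth family in the sense of (s1)--(s3).
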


The proof of Proposition~\ref{prop:sweep} consists in gluing together two continuous
sweep-outs given by the following proposition.

\begin{prop}\label{prop:onesidesweep}
Let $M$ be a compact Riemannian $(n+1)$-manifold ($2\le n\le 6$) with $\partial M=\Sigma$
connected, minimal and unstable. Moreover, we assume that $|\Sigma|\le \boA_\boS(M)$. Then
there is a continuous sweep-out $\{\Sigma_t\}_{t\in[0,1]}$ of $M$ such that
$\LL(\{\Sigma_t\})=|\Sigma|$ and, for any
$\eps>0$, there is $\delta>0$ such that $|\Sigma_t|\le |\Sigma|-\delta$ if $t\ge \eps$.

Moreover, if $u_1$ is the first eigenfunction of the Jacobi operator on $\Sigma$ and $\nu$ is
the inward unit normal vector field along $\Sigma$, the hypersurface $\Sigma_t$ is given by
$\Phi(\Sigma,t)$ for $t$ close to zero where
$$
\Phi:\Sigma\times[0,\eps]\to M; (p,t)\mapsto \exp_p(tu_1(p)\nu(p))
$$
\end{prop}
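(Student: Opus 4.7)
The plan is to construct the sweep-out in two stages joined at a small $\eta>0$: an initial push along the first eigendirection of the Jacobi operator, then an extension through the bulk of $M$ preserving the strict area bound.

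First, since $\Sigma$ is connected and unstable, $-\boL_\Sigma$ has a simple negative first eigenvalue $\lambda_1<0$ with a strictly positive eigenfunction $u_1>0$ on $\Sigma$. With $\Phi(p,t)=\exp_p(tu_1(p)\nu(p))$ for the inward unit normal $\nu$, for sufficiently small $\eta>0$ the map $\Phi$ is a diffeomorphism from $\Sigma\times[0,\eta]$ onto a collar of $\Sigma$, and $\Sigma_t:=\Phi(\Sigma,t)$ is a smooth embedded hypersurface. The first variation of area at $t=0$ vanishes by minimality of $\Sigma$, while the second variation equals $Q_\Sigma(u_1,u_1)=\lambda_1\int_\Sigma u_1^2\,d\mathrm{vol}_\Sigma<0$. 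Shrinking $\eta$ if necessary, $t\mapsto|\Sigma_t|$ is strictly decreasing on $[0,\eta]$, providing the first piece of the sweep-out and matching the normal-graph formula required at $t=0$.

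For the extension, let $M_\eta$ denote the closure of $M\setminus\Phi(\Sigma\times[0,\eta))$, a compact manifold with boundary $\Sigma_\eta$ of area strictly less than $|\Sigma|$. The goal is to produce a continuous family $\{\Gamma_s\}_{s\in[\eta,1]}$ in $M_\eta$ with $\Gamma_\eta=\Sigma_\eta$, $\Gamma_1=\emptyset$, and $\max_{s\in[\eta,1]}|\Gamma_s|=|\Sigma_\eta|$; concatenating with the push and reparametrizing yields the desired $\{\Sigma_t\}_{t\in[0,1]}$. A natural candidate for $\{\Gamma_s\}$ is the family of boundaries of isoperimetric regions: for each $v\in[0,|M_\eta|]$, minimize $\boH^n(\partial^*\Omega\cap\mathrm{int}(M_\eta))$ among Caccioppoli sets $\Omega\subset M_\eta$ of volume $v$. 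Standard codimension-one regularity theory (valid for $n\le 6$) gives smooth CMC minimizers and continuity of the perimeter in $v$; a monotone reparametrization turns them into a smooth family in the sense of (s1)--(s3).

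The hard part is establishing the uniform bound $|\Gamma_s|\le|\Sigma_\eta|$ throughout the extension. If it failed at some $s_0$, then $\Gamma_{s_0}$ would be a stable CMC hypersurface with area exceeding $|\Sigma_\eta|$; the plan is to convert such a competitor into a stable minimal hypersurface of area $<|\Sigma|$, contradicting $|\Sigma|\le\boA_\boS(M)$. Concretely one pushes $\Gamma_{s_0}$ along its own first Jacobi eigenfunction if it is unstable, and then minimizes area in the appropriate (integral or $\mathbb{Z}/2\mathbb{Z}$) homology class via Propositions~\ref{prop:miniorient} and~\ref{prop:mininonorient}; in the non-orientable case this requires opening $M$ along $\Gamma_{s_0}$ as in Proposition~\ref{prop:opennonorient}. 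Once this uniform bound is secured, the continuous joining of the two stages at $t=\eta$, the verification of the sweep-out axioms (sw0$'$)--(sw3$'$), and the strict decay $|\Sigma_t|\le|\Sigma|-\delta(\eps)$ for $t\ge\eps$ all follow routinely from the strict monotonicity on $[0,\eta]$ and the bound $|\Gamma_s|\le|\Sigma_\eta|<|\Sigma|$ on the extension.
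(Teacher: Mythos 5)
Your first stage (pushing $\Sigma$ inward along $u_1\nu$ and using the negative second variation to get strict decrease of area on $[0,\eta]$) matches the paper. The extension through $M_\eta$ is where your argument breaks down, in two distinct places.

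First, the family of isoperimetric boundaries $\{\partial\Omega_v\}_{v}$ is not in general a continuous sweep-out: while the isoperimetric profile $v\mapsto\inf|\partial^*\Omega|$ is continuous, the minimizers themselves can jump, so neither (s2)--(s3) nor (sw2) is automatic, and you cannot simply reparametrize them into a generalized smooth family. Second, and more seriously, your mechanism for the uniform bound $|\Gamma_s|\le|\Sigma_\eta|$ does not work. If some isoperimetric boundary $\Gamma_{s_0}$ has area exceeding $|\Sigma_\eta|$, it is a \emph{stable CMC} hypersurface bounding a region, hence null-homologous; minimizing area in its homology class produces nothing useful, and pushing a large-area hypersurface and minimizing gives no reason for the resulting stable minimal hypersurface to have area below $|\Sigma|$. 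You would need the produced competitor to have area strictly less than $\boA_\boS(M)$ to contradict the hypothesis, and your construction provides no such estimate.

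The paper's route is different and is the key idea you are missing: it argues by contradiction at the level of the width. Assume every continuous sweep-out of $M_\eps$ has $\LL\ge|\Sigma|-\delta/2>|\Sigma_\eps|$; since $\partial M_\eps=\Sigma_\eps$ has non-vanishing inward mean curvature and the width exceeds $|\partial M_\eps|$, the min-max theorem for manifolds with boundary (Theorem~\ref{thm:contminmax}) produces a connected embedded \emph{minimal} hypersurface $S$ in $M_\eps$. One then rules out every possibility for $S$: if $S$ is non-orientable or non-separating, open $M_\eps$ along $S$ (Propositions~\ref{prop:openorient}, \ref{prop:opennonorient}) and minimize in the class of $[\widetilde\Sigma_\eps]$ to get a stable minimal hypersurface of area at most $|\Sigma_\eps|<\boA_\boS(M)$, a contradiction; if $S$ is orientable and separating, compare $|S|$ with $|\Sigma_\eps|$ and in either case minimization (Proposition~\ref{prop:miniorient}) again yields a stable minimal hypersurface of area below $\boA_\boS(M)$. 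This case analysis, together with the applicability condition $W>|\partial M_\eps|$ for the boundary min-max theorem, is the substance of the proof and is absent from your proposal.
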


\begin{proof}
Since $\Sigma$ is unstable, the first eigenvalue $\lambda_1$ associated to $u_1$ is
negative. $u_1$ is a positive function. For $\eps>0$ small enough,
the map $\Phi: \Sigma\times[0,\eps]\to M; (p,t)\mapsto \exp_p(tu_1(p)\nu(p))$ is well
defined.

We then define $\Sigma_t=\Phi(\Sigma,t)$ and $M_t=M\setminus \Phi(\Sigma\times[0,t))$. If
$\eps$ is chosen small enough, the family $\{\Sigma_t\}_{t\in[0,\eps]}$ defines a foliation of a
neighborhood of $\Sigma$ and satisfies the property (sw3'). All the leaves $\Sigma_t$
($t>0$) have non
vanishing mean curvature vector pointing towards $M_t$. Also
$|\Sigma_t|$ decreases for $t$ close to $0$ and $|\Sigma_\eps|\le |\Sigma|-\delta$ for
some $\delta>0$. So in
order to construct the sweep-out announced in the proposition, it is sufficient to
construct a sweep-out $\{\Sigma_t\}_{t\in[\eps,1]}$ of $M_\eps$ such that
$\LL(\{\Sigma_t\}_{t\in[\eps,1]})\le |\Sigma|-\delta/ 2$ : indeed, we can glue such a
sweep-out with the foliation $\{\Sigma_t\}_{t\in[0,\eps]}$ to produce the continuous
sweep-out of $M$.

So let us assume by contradiction that any continuous sweep-out
$\{\Sigma_t\}_{t\in[\eps,1]}$ of
$M_\eps$ satisfies $\LL(\{\Sigma_t\}_{t\in[\eps,1]})\ge |\Sigma|-\delta/ 2\ge
|\Sigma_\eps|+\delta/2$. Then the min-max theorem for manifolds with boundary
(Theorem~\ref{thm:contminmax} or Theorem 2.7 in \cite{Zho}) implies the existence of a connected
minimal hypersurface $S$ in $M_\eps$. Let us now look at properties of this hypersurface
$S$.

%that there exist a
%finite collection of disjoint
%minimal hypersurfaces $S_1,\cdots,S_p$ in $M_\eps$ and some positive integers $n_1,\cdots,
%n_p$ such that.

\begin{claim}\label{claim:1}
The hypersurface $S$ is orientable
\end{claim}

If $S$ is not orientable, we can consider the manifold $\widetilde M_\eps$
constructed by opening $M_\eps$ along $S$ by Proposition~\ref{prop:opennonorient} with a
map $\phi:\widetilde M_\eps \to M_\eps$ and the induced metric. The
boundary of $\widetilde M_\eps$ has two connected components : one is
$\widetilde\Sigma_\eps$ which is isometric to $\Sigma_\eps$ and its mean curvature vector
points into $\widetilde M_\eps$ and the other is $\widetilde S$ which is a double cover of
$S$ and is minimal. Since $S$ is not orientable and $\widetilde S$ is a double cover,
Proposition~\ref{prop:mininonorient} gives 
\begin{equation}\label{eq:eq1}
|\widetilde S|=2|S|\ge
\boA_\boS(M)>|\Sigma_\eps|=|\widetilde \Sigma_\eps|.
\end{equation}

Since the boundary of $\widetilde M(\eps)$ is mean convex and the homology class
$[\widetilde \Sigma_\eps]$ is non zero in $H_n(\widetilde M(\eps))$,
Proposition~\ref{prop:miniorient} applies. So there is a connected orientable stable minimal
hypersurface $S'$ in $\widetilde M(\eps)$ with area less than
$|\widetilde\Sigma_\eps|=|\Sigma_\eps|$. $S'$ could be equal to $\widetilde{S}$, but this
would imply that $|\widetilde \Sigma_\eps|>|\widetilde S|$ which is not
the case by \eqref{eq:eq1}. Thus, $S'$ is in the interior of $\widetilde M_\eps$. Then
$\phi(S')$ is an embedded orientable stable minimal hypersurface in $M_\eps$ with $|\phi(S')|\le
|\Sigma_\eps|$. We then have the following inequalities $|\boA_\boS(M)|\le |\phi(S')|\le
|\Sigma_\eps|\le |\Sigma|-\delta\le |\boA_\boS(M)|-\delta$ which gives us a contradiction.
Claim~\ref{claim:1} is proved.

\begin{claim}\label{claim:2}
The hypersurface $S$ separates $M_\eps$.
\end{claim}

If $S$ does not separate, Proposition~\ref{prop:miniorient} produces a non
separating stable minimal hypersurface $S'$ in $M_\eps$ ($S'$ does not separate since it
does not vanish in homology and $M_\eps$ has only one connected component). By
Proposition~\ref{prop:openorient},
we have a manifold $\widetilde{M_\eps}$ with three boundary components $S'_1$ and
$S'_2$ isometric to $S'$ and $\widetilde\Sigma_\eps$ isometric to $\Sigma_\eps$. 

The argument is then similar to the one of Claim~\ref{claim:1}. Since the boundary of
$\widetilde M_\eps$ is mean convex, Proposition~\ref{prop:miniorient} applies to the
homology class $[\widetilde \Sigma_\eps]$ which is non zero and gives a connecteed
orientable stable minimal
hypersurface $S''$ in $\widetilde M_\eps$ whose area is less than $|\widetilde
\Sigma_\eps|=|\Sigma_\eps|$. $S''$ could be equal to $S'_i$ ($i=1,2$), but
this would imply that $|\widetilde \Sigma_\eps|>|S'_i|=|S'|\ge\boA_\boS(M)$ which is not the
case. Thus $S''$ is in the interior of $\widetilde M_\eps$. Then $\phi(S'')$ is
an embedded orientable stable minimal hypersurface in $M_\eps$ with $|\phi(S'')|\le
|\Sigma_\eps|$. We then have the following inequalities $|\boA_\boS(M)|\le |\phi(S'')|\le
|\Sigma_\eps|\le |\Sigma|-\delta\le \boA_\boS(M)-\delta$ which gives us a contradiction.
Claim~\ref{claim:2} is proved.

Thus the hypersurface $S$ is orientable and separates; let $M'$ be the piece of $M_\eps$ whose
boundary is made of $S$ and $\Sigma_\eps$. If $|S|\ge |\Sigma_\eps|$, we can apply
Proposition~\ref{prop:miniorient} to produce a stable minimal hypersurface $S'$ in the
interior of $M$ with area less than $|\Sigma_\eps|$ (we notice that $S'$ can not be equal
to $S$ since $|S'|<|\Sigma_\eps|$). We get the contradiction $\boA_\boS(M)\le
|S'|<|\Sigma_\eps|<\boA_\boS(M)$.

If $|S|<|\Sigma_\eps|$, we have $|S|<\boA_\boS(M)$. Thus $S$ is unstable, it implies that we
can apply Proposition~\ref{prop:miniorient} to produce a stable minimal hypersurface $S'$
in the interior of $M$ with area less that $|S|<|\Sigma_\eps|$ which still leads to a
contradiction as above.

So we have proved that any minimal hypersurfaces $S$ produced by the min-max theorem in
$M_\eps$ leads to a contradiction ; thus there is a continuous sweep-out as in the statement of
Proposition~\ref{prop:onesidesweep}.
\end{proof}

Let us now give the proof of Proposition~\ref{prop:sweep}.

\begin{proof}[Proof of Proposition~\ref{prop:sweep}]
Since $\Sigma$ is unstable and $|\Sigma|\le\boA_\boS(M)$, $\Sigma$ separates.
Let $M_1$ and $M_2$ be the two sides of $\Sigma$ in $M$ : $M=M_1\cup M_2$ and $M_1\cap
M_2=\Sigma$. Proposition~\ref{prop:onesidesweep} gives a continuous sweep-out
$\{\Sigma_t^1\}_{t\in[0,1]}$ of $M_1$ and a continuous sweep-out
$\{\Sigma_t^2\}_{t\in[0,1]}$ of $M_2$. We also have families $\{\Ome_t^1\}$ and
$\{\Ome_t^2\}$ of open subdomains of $M_1$ and $M_2$.

Let us define $\{\Sigma_t\}_{t\in[-1,1]}$ and $\{\Ome_t\}_{t\in[-1,1]}$ by
$\Sigma_t=\Sigma_{-t}^1$ and $\Ome_t=M_1\setminus \overline{\Ome_{-t}^1}$ if $t\le0$ and
$\Sigma_t=\Sigma_t^2$ and $\Ome_t=M_1\cup \Ome_t^2$ if $t\ge 0$. $\{\Sigma\}_{t\in[-1,1]}$
is then a sweep-out which satisfies the properties stated in Proposition~\ref{prop:sweep}.
\end{proof}

A consequence of Proposition~\ref{prop:sweep} is the following estimate of the width of a
manifold $M$.

\begin{prop}\label{prop:minowidth}
Let $M$ be a closed Riemannian $(n+1)$-manifold ($2\le n\le 6$). Let $\Sigma$ be an orientable minimal
hypersurface which is unstable and $|\Sigma|\le \boA_\boS(M)$. Then the width of $M$
satisfies $W_M\le |\Sigma|$
\end{prop}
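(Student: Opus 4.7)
The plan is to combine Proposition~\ref{prop:sweep} with the conversion Theorem~\ref{thm:conttodisc} in order to exhibit a discrete sweep-out in $\Pi_M$ whose mass is controlled by $|\Sigma|$.

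First, I would apply Proposition~\ref{prop:sweep} to the unstable orientable minimal hypersurface $\Sigma$ (which, by Proposition~\ref{prop:miniorient}, must separate $M$). This produces a continuous sweep-out $\{\Sigma_t\}_{t\in[-1,1]}$ of $M$, together with the associated family of open sets $\{\Ome_t\}_{t\in[-1,1]}$ satisfying $(sw1)$--$(sw3)$, such that $\LL(\{\Sigma_t\})=|\Sigma|$.

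Next, I would set $\Phi(t)=\partial[\Ome_t]\in \boZ_n(M)$. By property $(sw1)$, $\Sigma_t\setminus \partial\Ome_t\subset P_t$ is a finite set and hence has zero $\boH^n$-measure, so $\MM(\Phi(t))=|\Sigma_t|$. Consequently, $\sup_t\MM(\Phi(t))=\LL(\{\Sigma_t\})=|\Sigma|<+\infty$. Properties $(sw2)$ and $(sw3)$ are built into Proposition~\ref{prop:sweep}. The no-concentration hypothesis $\bfm(\Phi,r)\to 0$ as $r\to 0$ is automatic for currents associated to a continuous sweep-out, by Remark~\ref{rmq:noconcentr} (Proposition~5.1 in \cite{Zho}). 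Thus $\{\Phi(t)\}$ satisfies all hypotheses of Theorem~\ref{thm:conttodisc}, which produces a $(1,\MM)$-homotopy sequence $S\in\Pi_M$ with
\[
\LL(S)\le \sup_{t\in[-1,1]}\MM(\Phi(t))=|\Sigma|.
\]

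Finally, by the very definition of the width associated to the fundamental class,
\[
W_M=W(\Pi_M)=\inf\{\LL(S'),\,S'\in\Pi_M\}\le \LL(S)\le |\Sigma|,
\]
which is the desired inequality. There is no serious obstacle: the proposition is essentially a direct corollary of the existence of the sweep-out from Proposition~\ref{prop:sweep} once combined with the standard continuous-to-discrete machinery. The only point requiring any care is to verify that $\Phi$ fits into the hypotheses of Theorem~\ref{thm:conttodisc}, which reduces to the observation above that the continuous sweep-out automatically enjoys the no-concentration property and that the mass of $\Phi(t)$ agrees with $|\Sigma_t|$.
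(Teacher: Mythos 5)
Your proposal is correct and follows exactly the paper's own route: apply Proposition~\ref{prop:sweep} to get a continuous sweep-out with $\LL(\{\Sigma_t\})=|\Sigma|$, convert it via Theorem~\ref{thm:conttodisc} to a discrete sweep-out $S\in\Pi_M$ with $\LL(S)\le|\Sigma|$, and conclude $W_M\le\LL(S)\le|\Sigma|$. The extra verification of the hypotheses of Theorem~\ref{thm:conttodisc} (mass equality and no-concentration via Remark~\ref{rmq:noconcentr}) is accurate and merely makes explicit what the paper leaves implicit.
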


\begin{proof}
By Proposition~\ref{prop:sweep}, there is a continuous sweep-out
$\{\Sigma_t\}_{t\in[-1,1]}$ of $M$ with $\LL(\{\Sigma_t\})=|\Sigma|$. By
Theorem~\ref{thm:conttodisc}, there is a discrete sweep-out $S\in\Pi_M$ with $\LL(S)\le
\LL(\{\Sigma_t\})=|\Sigma|$. Then $W_M\le |\Sigma|$.
\end{proof}

%%%%%%%%%%
%%%%%%%%%%

\section{Proof of Theorem~A}\label{sec:mainthm}

This section is entirely devoted to the proof of Theorem~A. The first step is to prove that
$\boA_1(M)$ is realized by some particular minimal hypersurfaces satisfying some
properties. The second step consists in estimating the index of these particular minimal
hypersurfaces. Let us just recall Theorem~A.

\begin{thA}%\label{thm:main}
Let $M$ be an oriented closed Riemannian $(n+1)$-manifold ($2\le n\le 6$). Then
$\boA_1(M)$ is equal to one of the following possibilities.
\begin{enumerate}
\item $|\Sigma|$ where $\Sigma\in \boO$ is a min-max hypersurface of $M$ associated to the
fundamental class of $H_{n+1}(M)$ and has index $1$.

\item $|\Sigma|$ where $\Sigma\in\boO$ is stable.
\item $2|\Sigma|$ where $\Sigma\in \boU$ is stable and its orientable $2$-sheeted
cover has index $0$ or $1$; if the index is $1$, $2|\Sigma|=W_M$.
\end{enumerate}

Moreover, if $\Sigma\in\boO$ satisfies $|\Sigma|=\boA_1(M)$, then $\Sigma$ is of type $1$ or
$2$ and if $\Sigma\in\boU$ satisfies $2|\Sigma|=\boA_1(M)$, then $\Sigma$ is of type $3$.
\end{thA}

So we fix some closed orientable $(n+1)$-manifold ($2\le n\le 6$) and we look at the number
$\boA_1(M)$.

%%%%%%%%%%

\subsection{$\boA_1(M)$ is realized}

In this section, we prove that $\boA_1(M)$ is realized either by a stable minimal
hypersurface or by an orientable min-max hypersurface. We begin by a remark about the
min-max hypersurfaces.

The Almgren-Pitts theory tells that the width $W_M$ of the manifold is equal to
$\sum_{i=1}^p n_i|S_i|$ where $S_1,\cdots,S_p$ is a finite collection of connected minimal
hypersurfaces and $n_1,\cdots,n_p$ are integers (Theorem~\ref{thm:discminmax}). The
following proposition makes this writing more precise when $W_M\le \boA_\boS(M)$.

\begin{prop}\label{prop:uniqminmax}
Let us consider a writing $W_M=\sum_{i=1}^p n_i|S_i|$ given by Theorem~\ref{thm:discminmax}.
If $W_M\le \boA_\boS(M)$ then
\begin{itemize}
\item either $W_M=|S_1|$ with $S_1\in\boO$,
\item or $W_M=2|S_1|$ with $S_1\in \boU$.
\end{itemize}
Moreover, if $W_M<\boA_\boS(M)$, the second case is not possible.
\end{prop}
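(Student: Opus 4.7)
The plan is to analyze each summand $n_i|S_i|$ in the decomposition $W_M=\sum_{i=1}^p n_i|S_i|$ given by Theorem~\ref{thm:discminmax}, show that $n_i|S_i|\ge W_M$ for every $i$ (with a strict inequality in the non-orientable unstable case, which I will rule out), and deduce that $p=1$ with $n_1$ forced to be $1$ or $2$ depending on the orientability of $S_1$.

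Fix $i$, and recall from the remark after Theorem~\ref{thm:discminmax} that $n_i$ is even whenever $S_i\in\boU$, so $n_i\ge 2$ in that case. I distinguish four sub-cases. If $S_i$ is orientable and stable, then $|S_i|\ge \boA_\boS(M)\ge W_M$ by the very definition of $\boA_\boS(M)$. If $S_i$ is orientable and unstable with $|S_i|\le \boA_\boS(M)$, Proposition~\ref{prop:minowidth} gives $W_M\le |S_i|$; if instead $|S_i|>\boA_\boS(M)\ge W_M$, the bound $|S_i|\ge W_M$ is immediate. If $S_i$ is non-orientable and stable, then $2|S_i|\ge \boA_\boS(M)\ge W_M$ by definition of $\boA_\boS(M)$, hence $n_i|S_i|\ge 2|S_i|\ge W_M$.

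The main obstacle is the non-orientable unstable case, which I claim cannot occur. Applying Proposition~\ref{prop:mininonorient}, I produce a connected stable minimal hypersurface $\Sigma'$ with $|\Sigma'|<|S_i|$ (strict because $S_i$ is not stable). If $\Sigma'\in\boO_\boS$, then $\boA_\boS(M)\le |\Sigma'|<|S_i|$, so $n_i|S_i|\ge 2|S_i|>2\boA_\boS(M)\ge W_M$; if $\Sigma'\in\boU_\boS$, then $\boA_\boS(M)\le 2|\Sigma'|<2|S_i|$, so $n_i|S_i|\ge 2|S_i|>\boA_\boS(M)\ge W_M$. In either subcase, $n_i|S_i|>W_M$, contradicting $W_M=\sum_j n_j|S_j|\ge n_i|S_i|$.

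Thus for every $i$ we have $n_i|S_i|\ge W_M$, and the chain $W_M=\sum_{j=1}^p n_j|S_j|\ge n_i|S_i|\ge W_M$ must consist of equalities, forcing $p=1$ and $n_1|S_1|=W_M$. If $S_1\in\boO$, then $n_1|S_1|=W_M\le|S_1|$ together with $n_1\ge 1$ forces $n_1=1$ and $W_M=|S_1|$. If $S_1\in\boU$, then $n_1\ge 2$ and $n_1|S_1|=W_M\le 2|S_1|$ force $n_1=2$ and $W_M=2|S_1|$; note that $S_1$ must then be stable, since the non-orientable unstable case has already been excluded. Finally, for the \emph{moreover} clause: in the non-orientable case we have $W_M=2|S_1|\ge\boA_\boS(M)$ by stability of $S_1$, which is incompatible with the strict inequality $W_M<\boA_\boS(M)$.
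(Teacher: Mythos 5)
Your argument is correct and follows essentially the same route as the paper: both sandwich $W_M$ using the definition of $\boA_\boS(M)$, Proposition~\ref{prop:minowidth} for orientable unstable components, and Zhou's even-multiplicity result for non-orientable ones. The only difference is organizational---you run the case analysis over every index $i$ and separately exclude the non-orientable unstable case (incidentally showing $S_1$ is stable in the second alternative, slightly more than the proposition asserts), whereas the paper applies the same chain of inequalities directly to $S_1$.
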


\begin{proof}
We know $W_M=\sum_{i=1}^p n_i|S_i|$. Let us first assume that $S_1$ is an
orientable minimal hypersurface. If $S_1$ is stable then $\boA_\boS(M)\le |S_1|\le
\sum_{i=1}^p n_i|S_i|
=W_M \le \boA_\boS(M)$. So we have equality in all the inequalities and $n_1=1$ and $p=1$. If
$S_1$ is unstable, we have $|S_1|\le W_M\le \boA_\boS(M)$ and, by
Proposition~\ref{prop:minowidth}, $W_M\le |S_1|\le \sum_{i=1}^p n_i|S_i| =W_M$ so
$n_1=1$ and $p=1$.

Let us now assume that $S_1$ is non orientable, we then know by Proposition~6.1 in
\cite{Zho} that $n_1$ is at least $2$. This implies that $\boA_\boS(M)\le 2|S_1|\le W_M\le
\boA_\boS(M)$ and then $n_1=2$ and $p=1$.
\end{proof}

The proof of Theorem~A consists in proving that
\begin{equation}\label{eq:boa1}
\boA_1(M)=\min (\boA_S(M),W_M).
\end{equation}

Because of Propositions~\ref{prop:ministable} and \ref{prop:uniqminmax}, the above
inequality implies that $\boA_1(M)$ is realized. So let us prove \eqref{eq:boa1}. By
Proposition~\ref{prop:ministable}, $\boA_S(M)$ is realized (if it is finite); so assume
that $\boA_S(M)>\boA_1(M)$. By Propositions~\ref{prop:miniorient} and
\ref{prop:mininonorient}, it means that there is some orientable unstable minimal
hypersurface $\Sigma$ with $|\Sigma|<\boA_\boS(M)$. By Proposition~\ref{prop:minowidth},
$\boA_\boS(M)>|\Sigma|\ge W_M$ so Proposition~\ref{prop:uniqminmax} applies and $W_M$ is
realized by a connected minimal hypersurface $\barre S$. We have then proved that any minimal
hypersurface $\Sigma$ with $|\Sigma|<\boA_\boS(M)$ is such that $|\barre S|=W_M\le
|\Sigma|$; so \eqref{eq:boa1} is proved.

Now let us consider a minimal hypersurface $\Sigma$ that realizes $\boA_1(M)$ but not of
type 2 or 3, \textit{i.e.} not stable. We want to prove that $\Sigma$ is an orientable min-max hypersurface. By Proposition~\ref{prop:mininonorient}, $\Sigma$ is
orientable. By Proposition~\ref{prop:sweep}, there is
a continuous sweep-out $\{\Sigma_t\}_{t\in[-1,1]}$ of $M$ with $\Sigma_0=\Sigma$ and
$\LL(\{\Sigma_t\})=|\Sigma|$. By Theorem~\ref{thm:conttodisc}, there is a discrete
sweep-out $S=\{\phi_i\}$ associated to $\{\Sigma_t\}$ with $\LL(S)\le \LL(\{\Sigma_t\})$.
As a consequence, $W_M\le \LL(S)\le \LL(\{\Sigma_t\})=|\Sigma|=\boA_1(M)\le W_M$ ; thus,
$S$ realizes
the width of $M$. So there is a min-max sequence $\{\phi_{i_j}(x_j)\}$ that converges in
the varifold sense to a minimal hypersurface that realizes the width of $M$. We want to
prove that $\Sigma$ is this limit minimal hypersurface.

In order to use Remark~\ref{rmq:masscontrol}, let us denote
$\widetilde\Phi(t)=\Sigma_{-1+2t}$. We know that $\lim_j
\MM(\phi_{i_j}(x_j))=W_M=|\Sigma|=\MM(\widetilde\Phi(1/2))$. So, because of
\eqref{eq:masscontrol} and the properties of the continuous sweep-out $\{\Sigma_t\}_t$,
$x_j\to 1/2$. By Remark~\ref{rmq:masscontrol}, this implies that $\phi_{i_j}(x_j)$
converges to $\widetilde\Phi(1/2)=\Sigma$ in the flat topology. Since $|\Sigma|=\lim_j
\MM(\phi_{i_j}(x_j))$, this implies that we also have convergence in varifold sense. So
$\Sigma$ is the limit of a min-max
sequence and then a min-max hypersurface.

In order to finish the proof of Theorem, we still have to control the index of these
hypersurfaces.

%%%%%%%%%%

\subsection{Index in the orientable case}\label{sec:indexorient}

Let us now prove that a type 1 hypersurface has index $1$ (see also \cite{MaNe3}).

Let $\Sigma$ be an orientable unstable minimal hypersurface with
$|\Sigma|=W_M=\boA_1(M)$. We want to prove that its index is at most $1$. So let us
assume it has index at least $2$. We then denote by $u_1$ and $u_2$ the first two
eigenfunctions of the Jacobi operator on $\Sigma$. By Proposition~\ref{prop:sweep}, there is a
sweep-out $\{\Sigma_t\}_{t\in[-1,1]}$ of $M$ such that $\Sigma_0=\Sigma$,
$\LL(\{\Sigma_t\})=|\Sigma|$ and $|\Sigma_t|\le |\Sigma|-\delta(\eps)$ for any $|t|\ge
\eps$. Moreover we have $\Sigma_t=\Phi(\Sigma,t)$ for $t$ close to $0$ where
$$
\Phi : \Sigma\times \R \to M; (p,t)\mapsto \exp_p(tu_1(p)\nu(p)).
$$

Let us change the definition of the map $\Phi$ by adding one variable and consider the new
definition
$$
\Phi : \Sigma\times \R\times \R \to M; (p,t,s)\mapsto \exp_p((tu_1(p)+su_2(p))\nu(p)).
$$
For $t$ and $s$ small, we define $\Sigma_{t,s}=\Phi(\Sigma,t,s)$. These are embedded
hypersurfaces living in a tubular neighborhood of $\Sigma$. The volume functional
$A(t,s)=|\Sigma_{t,s}|$ is smooth for $t,s$ small and its differential at $(0,0)$ vanishes
since $\Sigma$ is minimal. Its Hessian is negative definite since $u_1$ and $u_2$ are
associated to negative eigenvalues of the Jacobi operator on $\Sigma$. So for $\eps$ small
enough, we have $A(\eps\sin \theta,\eps\cos \theta)\le |\Sigma|-c\eps^2$ for some $c>0$
and all $\theta\in\R$.

Let us define a new continuous sweep-out $\{\Sigma'_t\}_{t\in[-1,1]}$ of $M$ by the
following choices
$$
\Sigma'_t=
\begin{cases}
\Sigma_t \text{ if }t\le -\eps\\
\Sigma_{\eps\sin\frac{t\pi}{2\eps},\eps\cos\frac{t\pi}{2\eps}} \text{ if }-\eps\le t\le \eps\\
\Sigma_t \text{ if }t\ge \eps
\end{cases}
$$
The family of open subsets $\{\Ome_t'\}_t$ associated to $\{\Sigma_t'\}_t$ can be adapted
from the original family $\{\Ome_t\}_t$.

Because of the properties of the original sweep-out and the control on the function $A$,
we see that $|\Sigma'_t|\le |\Sigma|-\delta$ for some $\delta>0$ and any $t\in[-1,1]$.
By Theorem~\ref{thm:conttodisc}, there exists a discrete sweep-out $S\in\Pi_M$ with
$\LL(S)\le |\Sigma|-\delta$. This implies that $W_M\le |\Sigma|-\delta=W_M-\delta$ and
gives a contradiction. So the index of $\Sigma$ is at most $1$.

%%%%%%%%%%

\subsection{Index in the non orientable case}
In this section, we control the index of the double cover of a type 3 non orientable
minimal hypersurface that realizes $\boA_1(M)$. We want to prove that it has index at most
$1$. 

Let $\Sigma$ be a type 3 non orientable minimal hypersurface. We thus have
$2|\Sigma|=\boA_1(M)\le W_M$. We open $M$ along $\Sigma$
by Proposition~\ref{prop:opennonorient} and get $\phi:\widetilde M\to M$ where $\phi
:\widetilde \Sigma=\partial \widetilde M\to \Sigma$ is a double cover. We lift the metric
of $M$ to $\widetilde M$. Let $\sigma$ denote the non trivial deck transformation of
$\phi:\widetilde \Sigma\to \Sigma$.

We assume that the Jacobi operator on $\widetilde\Sigma$ has index at least $2$. 
We know that $\Sigma$ is a stable minimal hypersurface means that the Jacobi operator
on $\widetilde\Sigma$ is positive on the space of $\sigma$-odd functions. So 
the first two eigenfunctions $u_1$ and $u_2$ on $\widetilde\Sigma$ must be $\sigma$-even
since their eigenvalues are negative. As a consequence, $u_1$ and $u_2$ can be seen as
functions on $\Sigma$.

Since $\phi$ is a local isometry from the interior of $\widetilde M$ to $M\setminus
\Sigma$, $\boA_\boS(\widetilde M)\ge \boA_\boS(M)$ and thus
$|\widetilde\Sigma|=\boA_1(M)\le\boA_\boS(M)\le
\boA_\boS(\widetilde M)$. Thus Proposition~\ref{prop:onesidesweep} gives 
a continuous sweep-out $\{\widetilde\Sigma_t\}_{t\in[0,1]}$ of $\widetilde M$ with
$\wtilde\Sigma$ of maximum area.

If $\Phi: \widetilde\Sigma\times [0,\eps]\to \widetilde M; (p,t)\mapsto
\exp_p(tu_1(p)\tilde \nu(p))$ ($\tilde \nu$ the inward unit normal vector field to
$\widetilde\Sigma$), we know that
$\widetilde\Sigma_t=\Phi(\widetilde\Sigma,t)$ for $t$ close to $0$. Moreover, for $t>0$,
we have $\widetilde \Sigma_t=(\partial\widetilde\Ome_t\setminus \widetilde \Sigma) \cup
\widetilde P_t$ where $\{\wtilde\Ome_t\}$ is a family of open subsets of $\widetilde M$ with
$\widetilde \Sigma\subset \widetilde\Ome_t$ and $\{\widetilde P_t\}$ is a family of finite
subsets. 

Let us consider $\Ome_t=\phi(\widetilde\Ome_t)$ and $P_t=\phi(\wtilde P_t)$ for
$t\in[0,1]$. We have $\Ome_0=\emptyset$
and, for $t>0$, $\Ome_t$ is a domain in $M$ that contains $\Sigma$ and whose boundary is
$\Sigma_t\setminus P_t=\phi(\widetilde\Sigma_t\setminus \wtilde P_t)$. For $t$ close to
$0$, $\Ome_t$ is contained in a
tubular neighborhood of $\Sigma$.

Let $N\Sigma$ be the normal bundle to $\Sigma$, it is a twisted line bundle over $\Sigma$.
We notice that the map $\phi:\widetilde\Sigma$ extends to a double cover
$\pi: N\widetilde\Sigma\to N\Sigma$ where the normal bundle $N\widetilde\Sigma$ to
$\widetilde\Sigma$ is trivial. For a non negative function $u$ on $\Sigma$, we can consider 
$$
N_u\Sigma=\{(p,n)\in N\Sigma \mid \|n\|<u(p)\}
$$
If $\eps>0$, we have $N_\eps\Sigma$ for the constant function $u\equiv \eps$. The
map $\Psi : N_\eps\Sigma\to M; (p,n(p))\mapsto \exp_p(n(p))$ is a diffeomorphism on the
$\eps$-tubular neighborhood of $\Sigma$ when $\eps$ is small enough. For a continuous non
negative function $u$ on $\Sigma$ with $u\le \eps$, $D_u=\Psi(N_u\Sigma)$ is 	an open
subset of the $\eps$-tubular neighborhood of $\Sigma$. With this notation, if $0<t<\eps'$
for $\eps'$ small enough, we have $\Ome_t=D_{tu_1}$ (here the $\sigma$-even function $u_1$
is seen as a function on $\Sigma$).

In order to construct a particular sweep-out, we are going to change the domains $\Ome_t$
for $t$ small. Let $\eps'$ be such that $\eps'(\|u_1\|_\infty+\|u_2\|_\infty)\le \eps$.
Then if $t\le \eps'$ and $\theta\in\R$, the domain $O_{t,\theta}=D_{t(\cos\theta
u_1+\sin\theta u_2)^+}$ (where $u^+=\max(0,u)$ denotes the positive part of $u$) is well
defined and is included in the $\eps$-tubular neighborhood $D_\eps$ of $\Sigma$. 

Let us remark that
$u_2$ does not have a fixed sign so $\cos\theta u_1+\sin\theta u_2$ can be negative
somewhere and then $\Sigma$ can be not included in $O_{t,\theta}$. The boundary of
$O_{t,\theta}$ is included in an immersed hypersurface $S_{t,\theta}$ which is the image
of $\{p,t(\cos\theta u_1(p)+\sin\theta u_2(p))\tilde \nu(p),p\in\widetilde\Sigma\}\in
N\widetilde\Sigma$ by $\Psi\circ\pi$. This implies that $O_{t,\theta}$ is a domain with
rectifiable boundary. Moreover we can estimate $\boH^n(\partial O_{t,\theta})$ in two
different ways.

The first estimation is just the fact that $\partial O_{t,\theta}\subset 
S_{t,\theta}$ so 
\begin{equation}\label{eq:graph}
\boH^n(\partial O_{t,\theta})\le |S_{t,\theta}|
\end{equation}
($|S_{t,\theta}|$ computes the volume of an immersed hypersurface so multiplicities may
appear).

The second estimation uses the fact that $\cos\theta u_1+\sin\theta u_2$ can be negative
somewhere. So, in order to compute the $\boH^n$-measure of $\partial O_{t,\theta}$, we just
have to take care of the part of $S_{t,\theta}$ that correspond to point where
$\cos\theta u_1+\sin \theta u_2$ is positive. This implies that 
\begin{equation}\label{eq:neg}
\boH^n(\partial O_{t,\theta})\le 2\boH^n(\{p\in\Sigma\mid \cos\theta u_1(p)+\sin \theta
u_2(p)>0\})+ct
\end{equation}
for some constant $c$ that does not depend on $t$ and $\theta$.

As in Section~\ref{sec:indexorient}, the fact that $u_1$ and $u_2$ are eigenfunctions
associated to negative eigenvalues of the Jacobi operator on $\widetilde\Sigma$ implies
that there is some positive constant $c'$ such that, for $t$ small,
\begin{equation}\label{eq:unstable}
|S_{t,\theta}|\le|\widetilde \Sigma|-c't^2=2|\Sigma|-c't^2.
\end{equation}

Let us define our particular "sweep-out". So choose some small $\eta>0$ such that, for
$0<t<\eta$, the subdomains $O_{t,\theta}$ are well defined and the estimates
\eqref{eq:graph}, \eqref{eq:neg} and \eqref{eq:unstable} are true. For $t\in[\eta,1]$, we
define $\Ome'_t=\Ome_t$ and, for $t\in[-\pi/2+\eta,\eta]$, we define $\Ome'_t=O_{\eta,\eta-t}$
(both definitions coincide at $t=\eta$, see Figure~\ref{fig:fig}). We then have
$\Ome'_{-\pi/2+\eta}=O_{\eta,\pi/2}$. Finally, for $t\in[-\pi/2,-\pi/2+\eta]$, we define
$\Ome'_t=O_{t+\pi/2,\pi/2}$, we notice that both definitions agree at $t=-\pi/2+\eta$ and
$\Ome_{-\pi/2}'=\emptyset$. We notice that the family of open subsets
$\{\Ome_t'\}_{t_\in[-\pi/2,1]}$ satisfies (sw2) and (sw3).

\begin{figure}[h]
\begin{center}
\resizebox{1\linewidth}{!}{\input{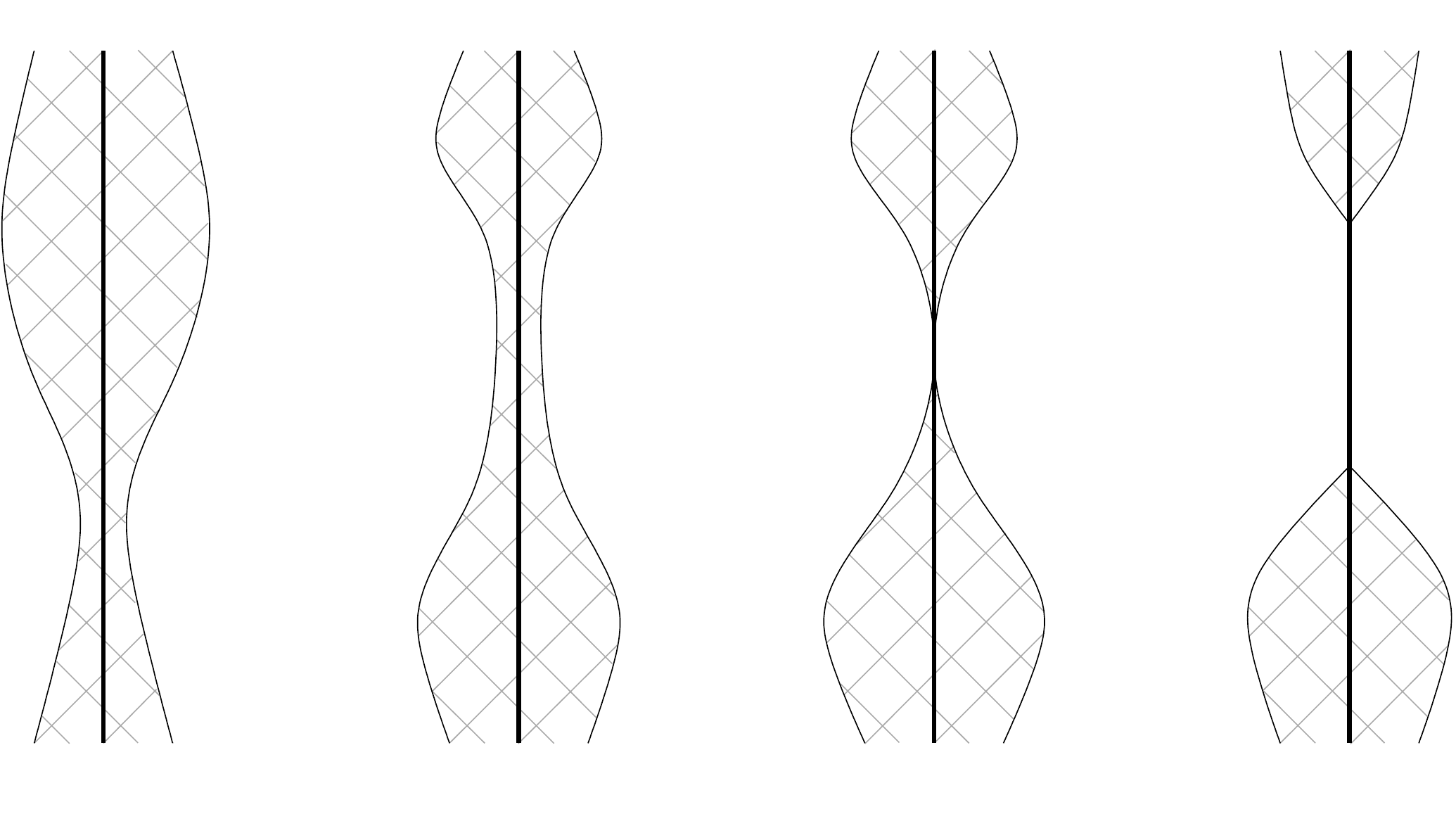_t}}
\caption{The evolution of $\Ome'_t$ for $t\in[-\pi/2+\eta,\eta]$\label{fig:fig}}
\end{center}
\end{figure}

Let us estimate the mass of $\partial[\Ome'_t]$ for $t\in[-\pi/2,1]$. If $t\ge \eta$,
$\Ome_t'=\Ome_t$ so we know by Proposition~\ref{prop:onesidesweep} that there is $\delta$
such that
\begin{equation}\label{eq:est1}
\MM(\partial[\Ome_t'])=|\partial\Ome_t|=|\Sigma_t|\le|\widetilde\Sigma|-\delta=2|\Sigma|-\delta.
\end{equation}
For $t\in[-\pi/2+\eta,\eta]$, we use \eqref{eq:graph} and \eqref{eq:unstable} to obtain
\begin{equation}\label{eq:est2}
\MM(\partial[\Ome'_t])=\boH_n(\partial\Ome'_t)\le |S_{\eta,\eta-t}|\le
2|\Sigma|-c'\eta^2.
\end{equation}
For $t\in[-\pi/2,\pi/2+\eta]$, we use \eqref{eq:graph}, \eqref{eq:neg} and
\eqref{eq:unstable} to obtain
\begin{equation}\label{eq:est3}
\begin{split}
\MM(\partial[\Ome'_t])&=\boH_n(\partial\Ome'_t)\\
&\le \min \{2|\Sigma|-c'(t+\pi/2)^2,2\boH^n(\{ u_2>0\})+c(t+\pi/2)\})
\end{split}
\end{equation}

Since $\boH^n(\{u_2>0\})<|\Sigma|$, the estimates \eqref{eq:est1}, \eqref{eq:est2} and
\eqref{eq:est3} imply that there is some $\delta'>0$ such that
$\MM(\partial[\Ome'_t])\le 2|\Sigma|-\delta'$ for any $t\in[-\pi/2,1]$. We can now apply
Theorem~\ref{thm:conttodisc} to obtain a discrete sweep-out $S\in\Pi_M$ with $\LL(S)\le
2|\Sigma|-\delta'$ (the hypothesis on $\bfm(\Phi,r)$ is fulfilled since the supports of
$\partial[\Ome_t']$ are contained in a continuous family of immersed hypersurfaces so
arguments in the proof of Proposition~5.1 in \cite{Zho} apply, see also
Remark~\ref{rmq:noconcentr}). As a consequence, this implies that $W_M\le 2|\Sigma|-\delta'$
which contradicts that $2|\Sigma|\le W_M$. We have then proved that the Jacobi operator on
$\widetilde\Sigma$ has index at most $1$. 

If $\Sigma$ has index $1$, as above, $\Sigma_t$ and $\Ome_t$ can be constructed since they
only depend on the first eigenvalue. By construction
$\sup\{\MM(\partial[\Ome_t]),t\in[0,1]\}=2|\Sigma|$. So by Theorem~\ref{thm:conttodisc},
$W_M\le2|\Sigma|$ and then $W_M=2|\Sigma|$.

\begin{remarq}
We just proved that $W_M=2|\Sigma|$ not that $\Sigma$ is a min-max hypersurface
\textit{i.e.} a varifold limit of a min-max sequence. With respect to the orientable
case, the difference comes from the fact that, as currents, the limit is here $0$. In
fact, it seems possible, looking at the proof of Theorem~\ref{thm:conttodisc} to control
the support of the discrete sweep-out from the support of the continuous one and prove
that actually $\Sigma$ is a min-max hypersurface associated to the fundamental class of
$H_{n+1}(M)$.
\end{remarq}

%%%%%%%%%%
%%%%%%%%%%

\section{The $3$-dimensional case}\label{sec:mainthm3}

In this section, we give some improvements to Theorem~A when the ambient
manifold has dimension $3$.

%%%%%%%%%%

\subsection{Some topology of $3$-manifolds}

Let us recall some definitions about the topology of $3$-manifolds. 

A \textit{compression body} is a $3$-manifold $B$ with boundary with a particular boundary
component $\partial_+B=\Sigma\times\{0\}$ such that $B$ is obtained from $\Sigma\times
[0,1]$ by attaching $2$-handles and $3$-handles, where no attachments are performed along
$\partial_+B=\Sigma\times\{0\}$ (see \cite{CaGo} for related definitions).

A compression body with only one boundary component, \textit{i.e.} $\partial
B=\partial_+B$, is called a \textit{handlebody}. A handlebody can also be seen as a closed
ball with $1$-handles attached along the boundary. 

% An open handlebody is an handlebody with the boundary removed.

Let $M$ be a compact $3$-manifold with maybe non empty boundary. A separating orientable
surface $\Sigma$ in the interior of $M$ is a \textit{Heegaard splitting} if both sides of
$\Sigma$ are compression bodies $B_1$ and $B_2$ with $\partial_+ B_1=\Sigma=\partial_+
B_2$. Let us notice that Heegaard splittings always exist. If $M$ has no boundary $B_1$
and $B_2$ are handlebodies.

If $M$ is a compact $3$-manifold, the \textit{Heegaard genus} of $M$ (denoted by $g_H(M)$) is
defined as the minimum of the genera of all surfaces that are Heegaard splittings.

In the following, we will use the following characterization of handlebodies which is due
to Meeks, Simon and Yau (see Proposition~$1$ in \cite{MeSiYa}).

\begin{prop}\label{prop:mesiya}
Let $M$ be a compact Riemannian $3$-manifold with one boundary component. $M$ is a
handlebody if and only if the isotopy class of a parallel surface to $\partial M$ contains
surfaces of arbitrary small area.
\end{prop}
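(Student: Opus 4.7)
The ``only if'' direction I would handle by exhibiting a spine: since $M$ is a handlebody, there is a finite embedded graph $\Gamma$ in the interior of $M$ together with a diffeomorphism $M\setminus\Gamma\cong\partial M\times[0,1)$. For small $\epsilon>0$, the boundary $\partial N_\epsilon(\Gamma)$ of an $\epsilon$-tubular neighborhood of $\Gamma$ is isotopic to $\partial M$ via this product structure, and since $\Gamma$ is one-dimensional $|\partial N_\epsilon(\Gamma)|=O(\epsilon)\to 0$.

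For the converse, let $\{\Sigma_n\}$ be surfaces in the prescribed isotopy class with $|\Sigma_n|\to 0$, and let $\Omega_n$ denote the domain bounded by $\Sigma_n$ on the side opposite to $\partial M$, so that $\Omega_n$ is diffeomorphic to $M$. My plan has four steps. First, the isoperimetric inequality on $M$ applied to $\Omega_n$ (whose only interior boundary piece is $\Sigma_n$) forces $|\Omega_n|\to 0$, since the alternative (the collar side shrinking) cannot occur: a surface isotopic to and close to $\partial M$ cannot have vanishing area when $|\partial M|>0$. Second, passing to a nested subsequence so that $\Omega_{n_k}\supset\Omega_{n_{k+1}}$, the intersection $K=\bigcap_k\overline{\Omega_{n_k}}$ is a compact set with $\boH^3(K)=0$, and the coarea formula combined with $|\Sigma_{n_k}|\to 0$ forces the topological dimension of $K$ to be at most $1$. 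Third, the isotopies from $\partial M$ to each $\Sigma_{n_k}$ may be concatenated along the nesting so that $M\setminus K\cong\partial M\times[0,1)$, which yields a deformation retraction of $M$ onto $K$. Finally, after approximating $K$ by a nearby finite embedded $1$-complex, I would conclude using the classical fact that a compact orientable $3$-manifold with connected boundary that deformation retracts onto a finite graph is a handlebody.

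The main obstacle is the rectification step: showing that the Hausdorff limit $K$ admits the structure of a genuine finite $1$-complex onto which $M$ retracts in a way compatible with a product structure on $M\setminus K$. Meeks, Simon and Yau obtain this by refining the isotopies using their regularity theory for nearly area-minimizing surfaces within an isotopy class, so that at each stage the thin side is literally a tubular neighborhood of a spine, and then pass to a smooth limit. Once $K$ is known to be a graph, the final topological conclusion that $M$ is a handlebody is standard.
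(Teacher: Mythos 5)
The paper does not actually prove this statement: it is quoted verbatim as Proposition~1 of Meeks--Simon--Yau \cite{MeSiYa}, so your attempt should be measured against their argument rather than against anything in this article. Your ``only if'' direction is correct and is the standard one (spine $\Gamma$, product structure on $M\setminus\Gamma$, boundary of an $\eps$-tube over a $1$-complex has area $O(\eps)$).

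The converse, however, has genuine gaps beyond the rectification step you flag. The most serious is the phrase ``passing to a nested subsequence so that $\Omega_{n_k}\supset\Omega_{n_{k+1}}$'': nothing guarantees this. The surfaces $\Sigma_n$ are each isotopic to $\partial M$, but the isotopies are unrelated, and the domains $\Omega_n$ can be thin neighborhoods of completely different, mutually non-nested subsets of $M$ (think of tubes around two disjoint spines). No subsequence need be nested, and without nesting the set $K$, the concatenation of isotopies, and the product structure on $M\setminus K$ all disappear; arranging any form of nesting is itself a nontrivial isotopy/minimization argument, not a routine extraction. Second, the claim that $\boH^3(K)=0$ together with $|\Sigma_{n_k}|\to 0$ forces $\dim K\le 1$ is false as stated: a flat $2$-disk has $\boH^3=0$ and lies inside open sets of arbitrarily small volume bounded by surfaces of uniformly bounded (even small, after rounding) area, so neither the volume bound nor the boundary-area bound controls the topological dimension of a decreasing intersection. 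Finally, the step you correctly identify as the main obstacle --- producing an actual finite graph spine with a genuine product complement --- is the entire content of the Meeks--Simon--Yau proposition; deferring it to their regularity theory means the proposal is an outline of what one would like to be true rather than a proof. (The last topological step is fine provided you really have $M\setminus\Gamma\cong\partial M\times[0,1)$ for a finite graph $\Gamma$, since then $M$ is a regular neighborhood of $\Gamma$; a mere deformation retraction onto a graph would require more care.)
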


Let $M$ be a compact $3$-manifold with boundary, a proper embedding of $\S^1\times[0,1]$
is an incompressible annulus if the inclusion is $\pi_1$ injective (by proper we mean
$\partial(\S^1\times[0,1])$ is sent to $\partial M$).

%%%%%%%%%%

\subsection{An improvement in the $3$-dimensional case}

The improvement that we obtain in the $3$-dimensional case is that we can control the
genus of the min-max surfaces that appear in Theorem~A. So we have the
following result.

\begin{thB}%\label{thm:maindim3}
Let $M$ be an oriented closed Riemannian $3$-manifold. Then $\boA_1(M)$ is equal to
one of the following possibilities.
\begin{enumerate}
\item $|\Sigma|$ where $\Sigma\in \boO$ is a min-max surface of $M$ associated to the
fundamental class of $H_{3}(M)$, $\Sigma$ has index $1$ and $g_\Sigma\ge g_H(M)$.
\item $|\Sigma|$ where $\Sigma\in\boO$ is stable.
\item $2|\Sigma|$ where $\Sigma\in \boU$ is stable and its orientable $2$-sheeted
cover has index $0$ or $1$. Moreover if the double cover $\widetilde\Sigma$ has index $1$,
we have $g_{\widetilde\Sigma}\ge g_H(M)-1$ and $W_M=2|\Sigma|$.
\end{enumerate}

Moreover, if $\Sigma\in\boO$ satisfies $|\Sigma|=\boA_1(M)$, then $\Sigma$ is of type $1$ or
$2$ and if $\Sigma\in\boU$ satisfies $2|\Sigma|=\boA_1(M)$, then $\Sigma$ is of type $3$.
\end{thB}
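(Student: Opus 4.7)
\emph{Plan of proof.} The classification into types $1$, $2$, $3$ together with the index bounds is inherited directly from Theorem~A, so only the two genus estimates need to be added: $g_\Sigma\ge g_H(M)$ in type~$1$ and $g_{\widetilde\Sigma}\ge g_H(M)-1$ in type~$3$. The equality $W_M=2|\Sigma|$ in the index-$1$ subcase of type~$3$ is already contained in the proof of Theorem~A.

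The common ingredient is the following. Let $Y$ be either one of the two sides $M_1,M_2$ of a type-$1$ surface $\Sigma$ in $M$, or the manifold $\widetilde M$ obtained from Proposition~\ref{prop:opennonorient} by opening $M$ along a type-$3$ surface $\Sigma$ (with $\partial\widetilde M=\widetilde\Sigma$ the orientable double cover of $\Sigma$). Propositions~\ref{prop:sweep} and~\ref{prop:onesidesweep} (applied, in the type-$3$ case, on $\widetilde M$ with boundary $\widetilde\Sigma$, which is valid since the index-$1$ hypothesis on $\widetilde\Sigma$ provides the necessary first eigenfunction) supply a continuous sweep-out of $Y$ starting at $\partial Y$ and ending at the empty set, whose leaves all have area strictly less than $|\partial Y|$ and whose leaves near the initial one are parallel copies of $\partial Y$. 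I will show that $Y$ is a handlebody. In type~$1$ this immediately gives that $\Sigma$ is a Heegaard splitting surface of $M$ and $g_\Sigma\ge g_H(M)$; in type~$3$, writing $M$ as $\widetilde M$ with a twisted $I$-bundle over $\Sigma$ attached along $\widetilde\Sigma$, a Heegaard splitting of $\widetilde M$ of genus $g_{\widetilde\Sigma}$ can be extended across this $I$-bundle by the addition of a single compressing $1$-handle, yielding $g_H(M)\le g_{\widetilde\Sigma}+1$.

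To establish the handlebody claim, I use the Meeks-Simon-Yau characterization (Proposition~\ref{prop:mesiya}). Suppose $Y$ is not a handlebody; then the infimum $a$ of the areas of surfaces in $Y$ isotopic to the parallel surface $\partial Y$ is positive. The parallel copies $\Sigma_\epsilon$ (resp.\ $\widetilde\Sigma_\epsilon$) from the sweep-out belong to this isotopy class and have area strictly less than $|\partial Y|$, so $a<|\partial Y|$. Minimizing in this isotopy class and passing to a varifold limit yields a limit with support a disjoint union of embedded stable minimal surfaces, of total mass~$a$. Since $a<|\partial Y|$, the limit cannot be concentrated on $\partial Y$, so some component $\Sigma'$ lies in the interior of $Y$; and because a sequence of orientable surfaces converging to a non-orientable limit covers it with multiplicity at least~$2$, we obtain in all cases
\[
|\Sigma'|\ \text{(if $\Sigma'$ is orientable)}\quad\text{or}\quad 2|\Sigma'|\ \text{(if $\Sigma'$ is non-orientable)}\ \le\ a\ <\ |\partial Y|.
\]
Projecting via the local isometry $\phi:\widetilde M\to M$ when needed produces in $M$ an embedded stable minimal surface whose contribution to $\boA_\boS(M)$ is strictly less than $\boA_1(M)$, contradicting $\boA_\boS(M)\ge\boA_1(M)$.

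\emph{Main obstacle.} The heart of the argument is the minimization step, where one must carefully handle the possible multiplicities in the varifold limit: for the contradiction to work, the threshold $a<|\partial Y|$ must be matched against $m|\Sigma'|$ with $m\ge 2$ for non-orientable components, and this is where the "doubling" convention in the definition of $\boA_\boS(M)$ plays its role. The second, more topological, obstacle is the inequality $g_H(M)\le g_{\widetilde\Sigma}+1$ in type~$3$: one must explicitly produce a Heegaard splitting of $M$ from the handlebody structure of $\widetilde M$ by attaching one additional $1$-handle that encodes the twist of the $I$-bundle neighborhood of the non-orientable surface~$\Sigma$.
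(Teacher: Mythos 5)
Your proposal is correct and follows essentially the same route as the paper: the genus bounds are reduced to showing that each side of a type-$1$ surface (resp.\ the opened manifold $\widetilde M$ for a type-$3$ surface) is a handlebody, proved via the Meeks--Simon--Yau isotopy characterization together with the minimization-in-isotopy-class result (with multiplicity at least $2$ for non-orientable limits) and the inequality $\boA_\boS(M)\ge\boA_1(M)$, and the type-$3$ bound $g_H(M)\le g_{\widetilde\Sigma}+1$ is obtained by the same Rubinstein one-handle construction across the twisted $I$-bundle that the paper uses.
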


The proof is based on the following lemma where we use ideas similar to the proof of
Proposition~\ref{prop:onesidesweep}.

\begin{lem}\label{lem:handle}
Let $M$ be a compact Riemannian $3$-manifold with $\partial M=\Sigma$ connected, minimal
and unstable. Moreover we assume that $|\Sigma|\le \boA_\boS(M)$. Then $M$ is a
handlebody. 
\end{lem}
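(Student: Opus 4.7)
The plan is to argue by contradiction using the Meeks--Simon--Yau characterisation (Proposition~\ref{prop:mesiya}). Suppose $M$ is not a handlebody; then the isotopy class $\boI$ of a surface parallel to $\partial M=\Sigma$ in the interior of $M$ satisfies
$$
\beta:=\inf\{|\Sigma'|\,:\,\Sigma'\in\boI\}>0.
$$
The objective is to exhibit inside $M$ a connected stable minimal surface whose area (doubled if non-orientable) is strictly less than $|\Sigma|$, contradicting the hypothesis $|\Sigma|\le\boA_\boS(M)$; this will force $\beta=0$, and Proposition~\ref{prop:mesiya} will conclude.

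First I would exploit the instability of $\Sigma$ exactly as at the beginning of the proof of Proposition~\ref{prop:onesidesweep}. Let $u_1>0$ be a first eigenfunction of the Jacobi operator on $\Sigma$ (with eigenvalue $\lambda_1<0$) and let $\nu$ be the inward unit normal. For small $t>0$ the pushed-in surface $\Sigma_t=\{\exp_p(tu_1(p)\nu(p)):p\in\Sigma\}$ is a smooth embedded element of $\boI$, and the second-variation formula yields $|\Sigma_t|=|\Sigma|+\tfrac{t^2}{2}\lambda_1\int_\Sigma u_1^2+O(t^3)<|\Sigma|$, so already $\beta<|\Sigma|$. Second, I would apply the Meeks--Simon--Yau minimisation theorem to $\boI$: since $\beta>0$, the infimum is realised, after ambient isotopies and $\gamma$-reductions, by a smooth embedded stable minimal surface $S$ in the interior of $M$, counted with some integer multiplicity $m\ge 1$ satisfying $m|S|=\beta$. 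The inequality $|S|\le\beta<|\Sigma|$ forces $S\neq\partial M$, so $S$ lies in the interior of $M$ and hence belongs to $\boO_\boS\cup\boU_\boS$. If $S\in\boO_\boS$, then
$$
\boA_\boS(M)\le|S|\le\beta<|\Sigma|\le\boA_\boS(M),
$$
a contradiction. If $S\in\boU_\boS$, then because the minimising sequence consists of orientable surfaces in $\boI$, a non-orientable varifold limit must carry even multiplicity (as in Proposition~6.1 of \cite{Zho}), so $m\ge 2$ and
$$
\boA_\boS(M)\le 2|S|\le m|S|=\beta<|\Sigma|\le\boA_\boS(M),
$$
again a contradiction.

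The main obstacle is the correct invocation of Meeks--Simon--Yau: one must justify that minimisation in the isotopy class of parallel surfaces to the \emph{minimal} boundary $\partial M$ does produce a smooth stable minimal limit in the interior of $M$ (here this is ensured by $|S|<|\Sigma|$, which forces $S\neq\partial M$), and one must verify the even-multiplicity dichotomy for non-orientable limits of orientable minimising sequences. Everything else---producing the push-in $\Sigma_t$ from $u_1$ and playing off the resulting stable surface against $\boA_\boS(M)$---is routine and closely mirrors the arguments already carried out in the proof of Proposition~\ref{prop:onesidesweep}.
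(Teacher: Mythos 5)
Your argument is correct and is essentially the paper's own proof: push $\Sigma$ inward along the first eigenfunction to make the infimum in the isotopy class of a parallel surface strictly less than $|\Sigma|$, apply Meeks--Simon--Yau to get a stable minimal surface (with multiplicity at least $2$ if non-orientable), contradict $|\Sigma|\le\boA_\boS(M)$, and conclude via Proposition~\ref{prop:mesiya}. The only difference is cosmetic: the paper first removes the small collar $\Phi(\Sigma\times[0,t))$ and minimises in $M_t$, whose boundary $\Sigma_t$ is strictly mean convex, which sidesteps the barrier issue you flag for the minimal boundary $\Sigma$ itself.
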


\begin{proof}
Since $\Sigma$ is unstable, using the notations of the proof of
Proposition~\ref{prop:onesidesweep}, the manifold $M_t=M\setminus
\Phi(\Sigma\times[0,t)])$ has mean convex boundary for $t$ small. Let $t_0>t$ be small and
look at the quantity
$$
A=\inf\{|S|,\, S\text{ isotopic to }\Sigma_{t_0}\text{ in }M_t\}.
$$
If $A=0$, then $M_t$ and thus $M$ are handlebodies by Proposition~\ref{prop:mesiya}. If
$A\neq 0$, $A$ is realized by a union of stable minimal surfaces with multiplicities (see
Theorem~1' in \cite{MeSiYa}). Let $S$ be one of these stable minimal surfaces. If $S\in
\boO$, then $\boA_\boS(M)\le |S|\le |\Sigma_{t_0}|<|\Sigma|\le \boA_\boS(M)$ which is a
contradiction. If $S\in \boU$, Theorem~1' in \cite{MeSiYa} tells that its multiplicity
is at least $2$, so the same contradiction as above occurs. So we have $A=0$ and $M$ is a
handlebody. 
\end{proof}

\begin{proof}[Proof of Theorem~B]
The only thing we need is the control on the genus of the surface $\Sigma$.

Let $\Sigma$ be a type $1$ surface. So $\Sigma$ is a non stable minimal surface and
$|\Sigma|=\boA_1(M)\le \boA_\boS(M)$. By Proposition~\ref{prop:miniorient}, $\Sigma$
separates $M$ and, by Lemma~\ref{lem:handle}, both sides of $\Sigma$ in $M$ are
handlebodies. $\Sigma$ is then a Heegaard splitting and then $g_\Sigma\ge g_H(M)$.

Let $\Sigma$ be a type $3$ surface whose double cover $\widetilde\Sigma$ is not stable.
Let us open $M$ along $\Sigma$ (Proposition~\ref{prop:opennonorient}) to obtain a
$3$-manifold $\widetilde M$ with boundary $\widetilde\Sigma$. Since $\Sigma$ realizes
$\boA_1(M)$ we have $|\widetilde\Sigma|\le \boA_\boS(\widetilde M)$. By
Lemma~\ref{lem:handle}, $\widetilde M$ is a handlebody. So $M$ can be seen as a handlebody
where points on the boundary are identified through a fixed point free involution that
reverses the orientation. Actually it is possible to control the Heegaard genus of $M$
in terms of the genus of $\widetilde\Sigma$: there is an argument attributed to Rubinstein
by Shalen (see 4.5 in\cite{Sha}) which implies that $g_H(M)\le g_{\widetilde\Sigma}+1$.
The argument works as follows. Let $M_\eps$ be the outside of a $\eps$-tubular
neighborhood of $\Sigma$. Since $\wtilde M$ is a handlebody, $M_\eps$ is also a
handlebody. Choose a point $p$ on $\Sigma$ and consider $\gamma$ the normal geodesic to
$\Sigma$ with length $2\eps$ and $p$ as middle point. The end points of $\gamma$ are in
$\partial M_\eps$. Let $H$ be the union of $M_\eps$ with a small tubular neighborhood of
$\gamma$. $H$ can be seen as $M_\eps$ to which a $1$-handle is attached so it is a
handlebody. In fact the complement of $H$ is also a handlebody since the complement of a
point in a closed surface continuously retract to a bouquet of circles. Now the genus of
$\partial H$ is just $g_{\wtilde\Sigma}+1$.
\end{proof}

%%%%%%%%%%
%%%%%%%%%%

\section{Minimal surfaces in hyperbolic $3$-manifolds}\label{sec:hyperbolic}

In this section, we prove a lower bound for the area of minimal surfaces in hyperbolic
$3$-manifolds. 

%%%%%%%%%%

\subsection{Area and genus}

In a hyperbolic $3$-manifold, the area of a minimal surface $\Sigma$ is always bounded
above by its topology, we have $|\Sigma|\le -2\pi \chi(\Sigma)$ (it is a classical
consequence of the The Gauss and Gauss-Bonnet formulas). If its index is at most
$1$, we can also obtain a lower bound in terms of its genus.
\begin{lem}\label{lem:area}
Let $\Sigma$ be an immersed orientable closed minimal surface in an oriented hyperbolic
$3$-manifold.
\begin{itemize}
\item If $\Sigma$ is stable, then $|\Sigma|\ge \pi|\chi(\Sigma)|=2\pi(g_\Sigma-1)$.
\item If $\Sigma$ has index $1$, then
$|\Sigma|\ge2\pi\Big(g_\Sigma-2-\left[\frac{g_\Sigma+1}2\right]\Big)$
\end{itemize} 
\end{lem}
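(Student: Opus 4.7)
The plan is to prove both inequalities by choosing judicious test functions for the stability/index-one inequality and combining them with the Gauss equation and Gauss--Bonnet.

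\textbf{Stable case.} Since the ambient metric has constant sectional curvature $-1$, one has $\mathrm{Ric}_M(\nu,\nu)=-2$, and the Gauss equation for a minimal surface gives $K_\Sigma = -1 - \tfrac12\|A\|^2$, i.e.\ $\|A\|^2 = -2-2K_\Sigma$. I would apply the inequality $Q_\Sigma(1,1)\ge 0$, which reads
\begin{equation*}
0 \le -\int_\Sigma (\mathrm{Ric}_M(\nu,\nu)+\|A\|^2) = \int_\Sigma (2-\|A\|^2)=\int_\Sigma(4+2K_\Sigma).
\end{equation*}
Gauss--Bonnet $\int_\Sigma K_\Sigma = 2\pi\chi(\Sigma)$ then yields $4|\Sigma|+4\pi\chi(\Sigma)\ge 0$, i.e.\ $|\Sigma|\ge -\pi\chi(\Sigma)=2\pi(g_\Sigma-1)$.

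\textbf{Index $1$ case.} The strategy is the classical Li--Yau--Hersch balancing trick combined with a conformal branched cover to $\mathbb{S}^2$. By Brill--Noether theory on the Riemann surface $\Sigma$ (of genus $g=g_\Sigma$), there exists a nonconstant meromorphic function, i.e.\ a conformal branched covering $f\colon\Sigma\to\mathbb{S}^2$, of some degree
\begin{equation*}
d \le \Bigl\lfloor\tfrac{g+1}{2}\Bigr\rfloor+1.
\end{equation*}
View $\mathbb{S}^2\subset\R^3$ and write $f=(f_1,f_2,f_3)$ so that $\sum f_i^2=1$ and, by conformality, $\sum |\nabla f_i|^2 = |df|^2$ with $\int_\Sigma |df|^2 = 8\pi d$. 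Using Möbius transformations of $\mathbb{S}^2$, one shows by a standard degree argument that $f$ can be precomposed with a conformal automorphism so that each new coordinate $f_i$ is $L^2$-orthogonal to the first eigenfunction $\phi_1$ of $\boL_\Sigma$.

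\textbf{Putting it together.} Since $\Sigma$ has index $1$ and the $f_i$ are orthogonal to the (unique) negative eigenfunction, $Q_\Sigma(f_i,f_i)\ge 0$ for each $i$. Summing,
\begin{equation*}
0 \le \sum_i Q_\Sigma(f_i,f_i) = \int_\Sigma |df|^2 -\int_\Sigma(\mathrm{Ric}_M(\nu,\nu)+\|A\|^2)\sum_i f_i^2
\end{equation*}
which, after plugging in $\int|df|^2=8\pi d$, $\sum f_i^2=1$, $\mathrm{Ric}_M(\nu,\nu)=-2$, $\|A\|^2=-2-2K_\Sigma$, and Gauss--Bonnet, reduces to $8\pi d + 4|\Sigma|+4\pi\chi(\Sigma)\ge 0$, hence
\begin{equation*}
|\Sigma|\ \ge\ 2\pi(g_\Sigma-1)-2\pi d\ \ge\ 2\pi\Bigl(g_\Sigma-2-\Bigl[\tfrac{g_\Sigma+1}{2}\Bigr]\Bigr).
\end{equation*}

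\textbf{Main obstacle.} The stable case is essentially a one-line computation, but the index-$1$ case hinges on two nontrivial ingredients: the existence of a conformal branched cover $\Sigma\to\mathbb{S}^2$ of the sharp degree $\lfloor(g+1)/2\rfloor+1$ (Brill--Noether), and the Möbius balancing argument allowing simultaneous orthogonality of all three coordinate functions to $\phi_1$. The balancing step is the main technical point: one introduces the map $G\colon B^3\to\R^3$ sending a Möbius parameter $\xi$ to $(\int_\Sigma (f\circ T_\xi)_i\phi_1)_{i=1,2,3}$, where $T_\xi$ are Möbius automorphisms of $\mathbb{S}^2$ parametrized by the open ball, and shows that $G$ extends continuously to the boundary with nonzero degree, so it must vanish at some interior point.
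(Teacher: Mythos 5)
Your proof is correct and follows essentially the same route as the paper: the constant test function plus Gauss and Gauss--Bonnet for the stable case, and the Hersch/Li--Yau balancing trick applied to a conformal branched cover $\Sigma\to\S^2$ of degree at most $1+\left[\frac{g_\Sigma+1}{2}\right]$ for the index-one case. The only difference is presentational: you spell out the Möbius balancing degree argument and invoke Brill--Noether for the gonality bound, where the paper simply cites Li--Yau and Ros--Ritoré for those two facts.
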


The first estimate tells that the area of an orientable stable minimal surface is well
controlled by its topology $\pi|\chi(\Sigma)|\le |\Sigma|\le 2\pi|\chi(\Sigma)|$. This
estimate was observed by K. Uhlenbeck but not published (see Hass \cite {Has}).

\begin{proof}
If $\Sigma$ is stable, we can use the constant function $1$ as a test function in the
stability operator and obtain
$$
\int_\Sigma-(\Ric(\nu,\nu)+\|A\|^2)\ge 0
$$
The Gauss formula implies that $\|A\|^2=-2(K_\Sigma+1)$ with $K_\Sigma$ the sectional
curvature of $\Sigma$. So, using the Gauss-Bonnet formula, we obtain
$$
|\Sigma|\ge-\frac12\int_\Sigma K_\Sigma=-\pi\chi(\Sigma)=2\pi(g_\Sigma-1)
$$

The study of the index $1$ case is based on what is called the Hersch trick. Let $u_1$ be
the first eigenfunction of the Jacobi operator on $\Sigma$. Let $\phi$ be a conformal map
from $\Sigma$ to $\S^2\subset \R^3$ and look at the following integral
$$
\int_\Sigma u_1(p) \times h\circ \phi(p) dp\in \R^3
$$
where $h$ is a M\"obius tranformation of $\S^2$. Since $u_1$ is non negative, we can find $h$
such that the above integral vanishes (see \cite{LiYa}). Let $(f_1,f_2,f_3)$ be the three
coordinates of $h\circ \phi$. $f_i$ is then orthogonal to $u_1$ and $\Sigma$ has index $1$, so
$$
\int_\Sigma \|\nabla f_i\|^2-(\Ric(\nu,\nu)+\|A\|^2)f_i^2\ge 0.
$$
Summing these three inequalities and using that $h\circ \phi$ is conformal we get
\begin{align*}
0\le \int_\Sigma& \|\nabla h\circ\phi\|^2-(\Ric(\nu,\nu)+\|A\|^2)\\
&=8\pi\deg(h\circ\phi)-\int_\Sigma (\Ric(\nu,\nu)+\|A\|^2)\\
&=8\pi\deg(\phi)-\int_\Sigma (\Ric(\nu,\nu)+\|A\|^2).
\end{align*}
As in \cite{RiRo}, we can choose $\phi$ such that $\deg(\phi)\le
1+\left[\frac{g_\Sigma+1}2\right]$. So computations similar to the stable case give
$$
|\Sigma|\ge 2\pi\Big(-1-\left[\frac{g_\Sigma+1}2\right]\Big)-\frac12\int_\Sigma
K_\Sigma=2\pi\Big(g_\Sigma-2-\left[\frac{g_\Sigma+1}2\right]\Big).
$$
\end{proof}

We remark that in the above proof we only use the fact that the sectional curvature of the
ambient manifold is bounded below by $-1$.

We can also remark that, in the stable case, the equality can not occur. Indeed, if
$|\Sigma|=2\pi(g_\Sigma-1)$, the proof tells that the constant function $1$ is in the
kernel of the Jacobi operator so $\Ric(\nu,\nu)+\|A\|^2=0$ and then $K_\Sigma=-2$. So the
lift of $\Sigma$ to $\H^3$ gives a complete immersion with constant sectional curvature
$-2$ which is not possible by Theorem~12 in \cite{Gal}.

%%%%%%%%%%

\subsection{The compact case}

We can now state our lower bound for the area of minimal surfaces in hyperbolic
$3$-manifolds.

\begin{thC}%\label{thm:hyp}
Let $M$ be a closed orientable hyperbolic $3$-manifold. If $g_H(M)\ge 7 $ then $\boA_1(M)\ge
2\pi$. In other words, any orientable minimal surface in $M$ has area at least $2\pi$ and
any non orientable minimal surface has area at least $\pi$.
\end{thC}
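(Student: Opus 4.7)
The plan is to combine Theorem~B with the area estimates of Lemma~\ref{lem:area} in a case analysis. A preliminary observation I would record is that any closed immersed minimal surface $\Sigma$ in a hyperbolic $3$-manifold has strictly negative Euler characteristic. Indeed, the Gauss equation gives $K_\Sigma=-1-\tfrac12\|A\|^2\le -1$ pointwise, so Gauss--Bonnet rules out $\chi(\Sigma)\ge 0$. Consequently any orientable minimal surface has genus $\ge 2$, and any non-orientable one satisfies $|\chi|\ge 1$, so its orientable double cover also has genus $\ge 2$.

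By Theorem~B the quantity $\boA_1(M)$ is realized by a surface of type~$1$, $2$, or $3$, and I would handle the four resulting subcases separately. If $\Sigma\in\boO$ is of type~$2$ (stable), then $g_\Sigma\ge 2$ and the stable estimate of Lemma~\ref{lem:area} gives $|\Sigma|\ge 2\pi(g_\Sigma-1)\ge 2\pi$. If $\Sigma\in\boU$ is of type~$3$ with stable orientable double cover $\wtilde\Sigma$ (index $0$), then $g_{\wtilde\Sigma}\ge 2$ and hence $2|\Sigma|=|\wtilde\Sigma|\ge 2\pi(g_{\wtilde\Sigma}-1)\ge 2\pi$.

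For the two index~$1$ subcases I would apply the second estimate of Lemma~\ref{lem:area}, which bounds the area below by $2\pi f(g)$ with $f(g)=g-2-\left[\tfrac{g+1}{2}\right]$; a direct check shows $f(g)\ge 1$ as soon as $g\ge 6$. In the type~$1$ case, $g_\Sigma\ge g_H(M)\ge 7$, so $|\Sigma|\ge 2\pi$. In the type~$3$ case with index~$1$ double cover, Theorem~B gives $g_{\wtilde\Sigma}\ge g_H(M)-1\ge 6$, so again $2|\Sigma|=|\wtilde\Sigma|\ge 2\pi$. Combining all four subcases yields $\boA_1(M)\ge 2\pi$.

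The main thing to verify is that the threshold $g_H(M)\ge 7$ is imposed precisely by the type~$3$ index~$1$ subcase: the control on the double cover only yields $g_{\wtilde\Sigma}\ge g_H(M)-1$, and $f(g_{\wtilde\Sigma})\ge 1$ requires $g_{\wtilde\Sigma}\ge 6$. Removing the non-orientable alternatives eliminates type~$3$, leaving only type~$1$, where $g_\Sigma\ge g_H(M)\ge 6$ already suffices; this matches the remark following the statement. The only real obstacle in this approach is the bookkeeping of these four subcases, since the underlying inputs (the area bounds of Lemma~\ref{lem:area}, Gauss--Bonnet, and the genus control in Theorem~B) do all of the work.
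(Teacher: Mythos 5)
Your proposal is correct and follows essentially the same route as the paper: Theorem~B supplies the realization of $\boA_1(M)$ together with the genus bounds $g_\Sigma\ge g_H(M)$ (type~1) and $g_{\wtilde\Sigma}\ge g_H(M)-1$ (type~3, index~1), and the two estimates of Lemma~\ref{lem:area} then close each of the four subcases exactly as in the paper, with the same arithmetic $g-2-\left[\frac{g+1}{2}\right]\ge 1$ for $g\ge 6$ explaining the threshold $g_H(M)\ge 7$.
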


\begin{proof}
Since $M$ has negative sectional curvature, any immersed closed minimal surface in $M$ has
negative Euler characteristic. By Theorem~B, $\boA_1(M)$ is realized by some
minimal surface $\Sigma$.

If $\Sigma$ is of type $2$, Lemma~\ref{lem:area} gives $|\Sigma|\ge 2\pi(g_\Sigma-1)\ge
2\pi$ since $\Sigma$ has negative Euler characteristic.

If $\Sigma$ is of type $1$, Lemma~\ref{lem:area} gives
$$
|\Sigma|\ge 2\pi\Big(g_\Sigma-2-\left[\frac{g_\Sigma+1}2\right]\Big)\ge
2\pi\Big(g_H(M)-2-\left[\frac{g_H(M)+1}2\right]\Big)\ge 2\pi.
$$

If $\Sigma$ is of type $3$, let $\widetilde\Sigma$ be its orientable double cover. If
$\widetilde\Sigma$ is stable, we get $2|\Sigma|=|\widetilde\Sigma|\ge 2\pi$ as above. If
$\widetilde\Sigma$ has index $1$, Theorem~B gives us $g_{\wtilde\Sigma}\ge
g_H(M)-1$ and we have
\begin{align*}
2|\Sigma|=|\widetilde\Sigma|&\ge
2\pi\Big(g_{\widetilde\Sigma}-2-\left[\frac{g_{\widetilde\Sigma}+1}2\right]\Big)\\
&\ge 2\pi\Big(g_H(M)-3-\left[\frac{g_H(M)}2\right]\Big)\ge 2\pi.
\end{align*}
So in all cases, we have $\boA_1(M)\ge 2\pi$.
\end{proof}

\begin{remarq}
If we know that there is no non orientable surface in $M$, then the conclusion of the
above theorem is true if we only assume $g_H(M)\ge 6$.

We can also remark that the same result is true if we only assume that the sectional
curvature of $M$ satisfies $-1\le K_M<0$.

We also notice that the existence of hyperbolic $3$-manifolds with arbitrarily large
Heegaard genus is given by a result of Souto (see Theorem~4.1 in \cite{Sout} and \cite{NaSo}).
\end{remarq}

If the hypothesis on the Heegaard genus is dropped, the monotonicity formula and the
thin-thick decomposition of $M$ tells us that any minimal surface in a closed hyperbolic
$3$-manifold has area at least some $c>0$ that does not depend on $M$ (see
\cite{CoHaMaRo}) (this is also true for closed immersed $H$-surfaces with $H<1$). So this leads us to ask: what is a closed orientable hyperbolic
$3$-manifold $M$ that minimizes $\boA_1(M)$ among such $3-$manifolds? What is a minimal
surface that realizes this $\boA_1(M)$ in $M$? We ask the same question for properly
embedded minimal surfaces in complete hyperbolic $3$-manifolds of finite volume $M$ (see
the following section). We believe an answer is a Seifert surface (a once punctured torus)
of the figure eight knot, made minimal in the hyperbolic structure of the complement of
the figure eight knot.

%%%%%%%%%%

\subsection{The finite volume case}

In this section we extend Theorem~C to the case where $M$ is a complete non-compact
hyperbolic $3$-manifold with finite volume. Notice that such a manifold has closed minimal
surfaces (see \cite{CoHaMaRo}).

If $M$ is such a manifold, $M$ is diffeomorphic to the interior of a compact manifold
$\barre M$ with
boundary whose boundary components are tori. Moreover, each end $E$ of $M$ can be isometrically
parametrized by $N_{v_1,v_2}$, the quotient of $\{(x,y,z)\in\R^2\times\R_+^*, z\ge 1/2\}$
by the group generated by the translations by the independent horizontal vectors $v_1$
and $v_2$, endowed with the Riemannian metric
$$
g_\H=\frac1{z^2}(dx^2+dy^2+dz^2).
$$
We notice that the $z$ coordinate is well defined on $N_{v_1,v_2}$.

In the following, we denote $\Lambda(E)=\Lambda(N_{v_1,v_2})=\max (\|v_1\|,\|v_2\|)$
($\|\cdot\|$ the Euclidean
norm) and we notice that by parametrizing a smaller part of $E$ we can always choose a
chart with $\Lambda(E)$ as small as we want.

We will use other metrics on $N_{v_1,v_2}$ to change the metric on $M$. More precisely, we
will use the following metric
$$
g_\Psi=\frac1{\Psi^2(z)}(dx^2+dy^2+dz^2)
$$
where $\Psi$ is function satisfying
\begin{itemize}
\item $\Psi(z)=z$ on $[1/2,1]$,
\item $\Psi$ is non decreasing.
\end{itemize}
The first condition means that this metric can be glued to the original hyperbolic metric. The
second one gives that the foliation by the tori $T(c)=\{z=c\}$ has a mean curvature vector
pointing in the $\partial_z$ direction.

In \cite{CoHaMaRo}, Collin, Hauswirth and the authors proved the following result.

\begin{prop}\label{prop:max}
Let $t_0\in[1/2, 1]$ and $\Psi$ be as above. There is a $\Lambda_0=\Lambda(t_0,\Psi)$ such
that if $\Lambda(N_{v_1,v_2})\le \Lambda_0$ and $\Sigma$ is a compact embedded minimal
surface in $(N_{v_1,v_2},g_\Psi)$ with $\partial\Sigma\in T(1-t_0)$ then $\Sigma\subset\{z\le
1\}$.
\end{prop}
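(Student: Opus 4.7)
The plan is to argue by contradiction and combine the maximum principle for the height function $z$ with a blow-up/compactness argument that uses the smallness of $\Lambda$ to rule out interior maxima at $z>1$.

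The starting point is the identity
\[
\Delta_\Sigma z = -2\,\Psi(z)\,\Psi'(z)\,\nu_z^2
\]
on any minimal surface $\Sigma \subset (N_{v_1,v_2},g_\Psi)$, where $\nu_z$ is the normalized vertical component of the unit normal; this is a direct conformal computation from $g_\Psi = \Psi^{-2}(z)(dx^2+dy^2+dz^2)$. It shows that $z$ is superharmonic on $\Sigma$ where $\Psi'>0$ and harmonic where $\Psi'=0$. Assume toward a contradiction that $c^*:=\max_\Sigma z > 1$ and that $c^*$ is attained at an interior point $p\in\Sigma$, so that $\Sigma$ is tangent from below to the level torus $T(c^*)$.

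If $c^*$ lies in the interior of a plateau $[a,b]$ of $\Psi$, the harmonic maximum principle applied to $\Sigma\cap\{a\le z\le b\}$ (where $g_\Psi$ is flat) forces the maximum onto the relative boundary $\Sigma\cap(T(a)\cup T(b))$, giving a contradiction; at a plateau endpoint the level $T(c^*)$ is itself minimal in $g_\Psi$, and the strong maximum principle for two tangent minimal surfaces forces the connected component of $\Sigma$ through $p$ to coincide with $T(c^*)$, which contradicts $\partial\Sigma\subset T(1-t_0)$. This disposes of every case except the strictly-increasing one $\Psi'(c^*)>0$.

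The hard part will be precisely this remaining case, where the tangential comparison gives only the consistent inequality $H_\Sigma = 0 \le \Psi'(c^*) = H_{T(c^*)}$, and one must genuinely use $\Lambda\le\Lambda_0$. My plan here is a blow-up argument: assuming the proposition fails, one produces sequences $\Lambda_k\to 0$ and minimal surfaces $\Sigma_k\subset(N_{v_1^k,v_2^k},g_\Psi)$ with $\max_{\Sigma_k} z > 1$, lifts them to the universal cover $\R^2\times[1/2,\infty)$, and rescales the horizontal coordinates by $\Lambda_k^{-1}$. The rescaled metrics degenerate in the horizontal directions while the vertical geometry governed by $\Psi$ is preserved, and uniform Schoen--Simon-type curvature estimates away from the boundary extract a subsequential limit that must be invariant under horizontal translations, hence a level torus $T(c)$ with $c\ge 1$. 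Such a torus is minimal in the limit metric only when $\Psi'(c)=0$, contradicting $\Psi'(c^*)>0$, and the proposition follows for $\Lambda_0$ small enough.
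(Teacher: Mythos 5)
First, a remark on the comparison itself: this paper does not prove Proposition~\ref{prop:max}; it is quoted from \cite{CoHaMaRo}, so there is no internal proof to measure your argument against. Judged on its own terms, your preliminary steps are sound: the identity $\Delta_\Sigma z=-2\Psi(z)\Psi'(z)\nu_z^2$ is the correct conformal computation, and the cases where $\Psi'(c^*)=0$ (plateau interior, or more generally any $c^*$ with $\Psi'(c^*)=0$, where $T(c^*)$ is itself minimal) are correctly dispatched by the strong maximum principle, with the caveat that one must assume every component of $\Sigma$ has nonempty boundary. You have also correctly located the entire difficulty: when $\Psi'(c^*)>0$ the tangency at the top point is on the ``good'' side of the mean-convex leaf, the comparison principle is silent, and the smallness of $\Lambda$ must enter.

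The gap is in the blow-up step, and it is not cosmetic. Schoen--Simon curvature estimates (and, more basically, Allard/varifold compactness) require either stability or uniform local area bounds; the surfaces $\Sigma_k$ in your contradiction sequence are arbitrary compact embedded minimal surfaces, possibly highly unstable and with no area control whatsoever. In the rescaled picture a fundamental domain of the lattice has bounded size but $\Sigma_k$ restricted to it can carry arbitrarily large area and curvature, so no subsequential limit can be extracted; and the whole point of the proposition is precisely that one has no a priori control on $\Sigma$ beyond minimality, embeddedness and the location of $\partial\Sigma$. A second, independent problem is the endgame: even granting a translation-invariant varifold limit (necessarily a sum of tori $T(c)$ with $\Psi'(c)=0$), your contradiction requires $\Psi'>0$ at the limiting height. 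But the heights $c_k^*=\max_{\Sigma_k}z$ may tend to $+\infty$ or to a zero of $\Psi'$ --- and in the intended application $\Psi'\to 0$ at infinity --- so the limit tori being minimal is then perfectly consistent. (Localizing instead at points of $\Sigma_k\cap T(1)$, where $\Psi'(1)=1$, would repair this second issue but not the first.) To close the argument one needs a mechanism that works without compactness --- e.g.\ first-variation/flux identities for the vertical field $\Psi(z)\partial_z$ and coarea estimates combined with the smallness of $|T(c)|\lesssim\Lambda^2$, as in \cite{CoHaMaRo} --- rather than a limiting argument that presupposes estimates the hypotheses do not supply.
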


As said above, in a finite volume hyperbolic $3$-manifold $M$, we can choose a chart
$N_{v_1,v_2}$ of each end $E$ with $\Lambda(E)\le \Lambda(1/3)$ (here $\Psi(z)=z$). The
above proposition says that any compact minimal surface in $M$ never enters in $\{z>1\}$
inside the ends. Thus all compact minimal surfaces in $M$ stay in a compact piece
of $M$; this compact piece will be denoted $C(M)$. In the following, all modifications on
$M$ will be made outside of $C(M)$.

We need a topological property of $M$.

\begin{lem}\label{lem:acylind}
Let $M$ be a complete non-compact hyperbolic $3$-manifold with finite volume; $M$ is the
interior of some manifold $\barre M$. $\barre M$ has no incompressible annulus.
\end{lem}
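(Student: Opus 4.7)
The plan is to argue by contradiction, combining the parabolic cusp structure of $M$ with the atoroidality of finite-volume hyperbolic $3$-manifolds. Suppose $A\subset\barre M$ is an incompressible annulus with $\partial A=c_0\cup c_1$, and let $\alpha$ denote its core circle.

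First I would show that both boundary components of $A$ lie on the same boundary torus of $\barre M$. Each $c_i$ is an essential simple closed curve on a peripheral torus $T_i\subset\partial\barre M$, hence represents a parabolic element of $\pi_1(M)\subset\Isom^+(\H^3)$; since $A$ is $\pi_1$-injective, $c_0$ and $c_1$ are freely homotopic in $M$ through $\alpha$, so they are conjugate in $\pi_1(M)$. The $\pi_1(M)$-orbits of parabolic fixed points on $\partial\H^3$ are in bijection with the cusps of $M$, and a parabolic element has a unique fixed point; consequently, two conjugate parabolic elements lie in conjugate maximal peripheral subgroups, forcing $T_0=T_1=:T$.

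Next, I would show that $c_0$ and $c_1$ cobound an annulus on $T$. The normalizer of the peripheral subgroup $\Gamma_T\cong\Z^2$ in $\pi_1(M)$ equals $\Gamma_T$ itself (two maximal parabolic subgroups sharing a non-trivial element must share a parabolic fixed point, hence coincide), so the conjugation carrying $c_0$ to $c_1^{\pm 1}$ can be realized inside the abelian group $\Gamma_T$; thus $c_0=c_1^{\pm 1}$ in $\Z^2$. Two disjoint embedded essential simple closed curves on the torus $T$ representing the same primitive homology class up to sign cobound an embedded annulus $A'\subset T$.

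Finally, I would glue $A$ to $A'$ (after perturbing $A'$ slightly into $\barre M^\circ$) to obtain an embedded closed surface $\Sigma$ in the interior of $\barre M$, a torus if the relative orientations match and otherwise a Klein bottle, in which case $\Sigma$ is replaced by the torus boundary of its regular neighborhood. A standard innermost disk and arc argument, exploiting the incompressibility of both $A$ and the peripheral torus $T$, shows $\Sigma$ is incompressible in $\barre M$. By the atoroidality of cusped finite-volume hyperbolic $3$-manifolds, every embedded incompressible torus is boundary-parallel, and since $\Sigma$ lies in a small neighborhood of $T$ it must be isotopic to $T$; hence $\Sigma$ cobounds with $T$ a product region $T\times[0,1]$ containing $A$. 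In such a product, any properly embedded incompressible annulus with boundary on a single $T$-face is boundary-parallel, which contradicts the assumed essentiality of $A$. The main technical obstacle is the verification that $\Sigma$ is incompressible, as the innermost disk and arc argument must carefully combine the incompressibility of $A$ with that of the peripheral torus $T$ and treat the non-orientable case by passing to the orientable double cover of the regular neighborhood.
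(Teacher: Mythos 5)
Your argument is correct in outline and is the standard purely topological proof of acylindricity for cusped finite-volume hyperbolic $3$-manifolds, but it is genuinely different from the proof in the paper. The paper instead modifies the cusp metric via a function $\Psi$ with $\Psi''\le 0$, $\Psi'(2)=0$, so that the truncated manifold $\barre M$ carries a metric of negative sectional curvature with totally geodesic flat torus boundary; it then isotopes $A$ so that $\partial A$ consists of geodesics in $\partial \barre M$, invokes the Hass--Scott existence theorem to replace $A$ by a least-area minimal annulus $S$ with the same (geodesic) boundary, and derives a contradiction from Gauss--Bonnet, since $\partial S$ is geodesic in $S$ and $K_S\le K_{\barre M}<0$ by the Gauss equation, forcing $0=2\pi\chi(S)=\int_S K_S<0$. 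The paper's route buys uniformity (no case analysis on which peripheral tori carry $\partial A$, no gluing, no Klein bottle case, no separate incompressibility verification) and fits the minimal-surface toolbox already used throughout the paper; your route avoids the least-area existence machinery but imports atoroidality of cusped hyperbolic manifolds, which is itself usually established by the same $\Z^2$-subgroup/parabolic-fixed-point analysis you carry out in your first two steps, and it requires the innermost disk/arc bookkeeping you flag. One caveat, common to both arguments: your final contradiction is with the non-boundary-parallelism of $A$, whereas the lemma as literally stated only assumes $\pi_1$-injectivity; since boundary-parallel $\pi_1$-injective annuli always exist alongside an incompressible boundary torus, the lemma must be read as asserting the absence of \emph{essential} annuli (which is exactly what the Rieck--Sedgwick application requires and exactly what you prove), and the paper's own proof makes the same implicit assumption when it produces a non-degenerate least-area representative of the isotopy class.
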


\begin{proof}
Let $E_1,\dots,E_p$ be the ends of $M$ and $N_1,\dots,N_p$ the associated charts with
function $z_i$ on each end. Let $\Psi$ be a function as above with $\Psi''\le 0$ and
$\Psi'(2)=0$ and $\Psi'(t)>0$ for $t<2$. This implies that $g_\Psi$ has negative sectional
curvature on $\{1/2<z<2\}$ and $T(2)$ is totally geodesic. We endow each $E_i$ with this
new metric and we cut $\{z_i>2\}$ for each end. We get a manifold diffeomorphic to $\barre
M$ with a Riemannian metric with negative sectional curvature on the inside and totally
geodesic boundary. This defines a metric on $\barre M$

Let $A$ be an incompressible annulus in $\barre M$ endowed with the above metric; we can
deform it isotopically such that its boundary consists of geodesic circles in
$\partial\barre M$.
By
Theorem~6.12 in \cite{HaSc}, there is a minimal surface $S$ isotopic to $A$ with
the same boundary. Since the boundary of $\barre M$ is totally geodesic, $\partial S$ is
geodesic inside $S$. By the Gauss formula $K_S\le K_{\barre M}<0$. So the Gauss-Bonnet formula
$0=2\pi\chi_S=\int_S K_S<0$ gives a contradiction.
\end{proof}

Let $\barre M$ be a compact $3$-manifold whose boundary components are tori. Let $T$ be one
of these tori. By fixing a basis of the homology of $T$, we define a chart on $T\simeq
\S^1\times \S^1$ such that the basis of the homology is $(\S^1\times\{p\},
\{q\}\times\S^1)$. Let $\S^1\times D$ ($D$ the unit disk) be a solid torus, we then can
glue $\barre M$ and the solid torus by identifying the boundary using the chart on
$T$. The topology of this Dehn filling depends on the choice of the homology basis we
made. By making Dehn filling on each boundary component of $\barre M$, we get a closed
manifold $D(\barre M)$. One can easily see that, concerning the Heegaard genus, we have the
following inequality $g_H(D(\barre M))\le g_H(\barre M)$.

If $\barre M$ comes from a complete non-compact hyperbolic $3$-manifold $M$ with finite
volume, Rieck and Sedgwick \cite{RiSe} prove that the Dehn fillings can always be done (by
choosing particular homology basis) such that $g_H(D(\barre M))=g_H(\barre M)$ (the
acylindrical hypothesis in their theorem is satisfied because of Lemma~\ref{lem:acylind})
(see also Moriah and Rubinstein \cite{MoRu}).

We can now state our result concerning finite volume hyperbolic $3$-manifolds. 

\begin{thm}
Let $M$ be a complete non compact hyperbolic $3$-manifold with finite volume, we denote by
$\barre M$ the associated compact $3$-manifold with boundary. If $g_H(\barre M)\ge 7$,
then any closed orientable minimal surface in $M$ has area at least $2\pi$ and any closed
non orientable minimal surface has area at least $\pi$.
\end{thm}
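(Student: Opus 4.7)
The plan is to reduce to the closed case treated in Theorem~C by modifying the metric on the ends of $M$ and performing Dehn fillings, producing a closed $3$-manifold $\hat M$ whose closed minimal surfaces all lie in a compact core on which the metric is still hyperbolic. First I would use Proposition~\ref{prop:max} to choose cusp charts $N_{v_1,v_2}$ on each end $E_i$ with $\Lambda(E_i)\le \Lambda_0$ so small that every closed minimal surface of $M$ is contained in the compact core $C(M)$, the complement of $\cup_i\{z_i>1\}$.

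Next I would replace the hyperbolic metric on each end by $g_\Psi$ for a non-decreasing concave function $\Psi$ with $\Psi(z)=z$ on $[1/2,1]$. This preserves the hyperbolic metric on $C(M)$, keeps the sectional curvature in $[-1,0]$ on the ends, and makes each torus $T(c)$ with $c>1$ mean-convex with mean curvature vector pointing toward $C(M)$. By Lemma~\ref{lem:acylind}, $\bar M$ has no incompressible annulus, so the Rieck--Sedgwick theorem \cite{RiSe} yields Dehn filling slopes for which the closed manifold $\hat M=D(\bar M)$ satisfies $g_H(\hat M)=g_H(\bar M)\ge 7$. I would carry out these fillings, gluing each solid torus with a flat product metric $dr^2+r^2\,d\theta^2+ds^2$ matching the modified cylindrical end at $T(z_0)$ (the induced metrics and the mean curvatures can be matched by tuning $\Psi'(z_0)$ and the radius $R$).

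The advantage of a flat solid torus is that its coordinate functions $x,y$ are harmonic, so by the maximum principle no closed minimal surface is entirely contained in a filling. Combined with the mean-convex tori $T(c)$ ($c>1$) in the modified ends, this forces every closed minimal surface of $\hat M$ to lie in $C(M)$. Now I would apply Theorem~B to $\hat M$: $\boA_1(\hat M)$ is realized by a minimal surface $\Sigma_0$ of type~1,~2 or~3, and in types~1 and~3 the genus of $\Sigma_0$ (respectively of its orientable double cover) is at least $g_H(\hat M)-1\ge 6$. Since $\Sigma_0\subset C(M)$ where the ambient metric is genuinely hyperbolic, Lemma~\ref{lem:area} applies to $\Sigma_0$ and, as in the proof of Theorem~C, yields $\boA_1(\hat M)\ge 2\pi$.

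Finally, any closed minimal surface $\Sigma$ of $M$ lies in $C(M)\subset\hat M$ by the choice of charts, hence is also a closed minimal surface of $\hat M$ with the same area, so $|\Sigma|\ge\boA_1(\hat M)\ge 2\pi$ if $\Sigma$ is orientable and $|\Sigma|\ge\pi$ otherwise. I expect the hardest step to be the smooth construction of the metric at the glueing $T(z_0)$ between the modified end and the flat solid-torus filling, and the verification that the mean-curvature sign of the tori $T(c)$ really does prevent closed minimal surfaces from entering the modified ends inside $\hat M$; the topological control provided by Rieck--Sedgwick is what makes the hypothesis $g_H(\bar M)\ge 7$ transfer to $\hat M$ and drive the final area estimate.
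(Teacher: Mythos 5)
Your overall strategy (Dehn fill with Rieck--Sedgwick slopes to keep the Heegaard genus, modify the metric only outside a compact core, then quote Theorem~B and Lemma~\ref{lem:area}) is exactly the paper's, but the step you flagged as needing verification is in fact where your argument breaks. Mean-convexity of the tori $T(c)$ together with the absence of closed minimal surfaces inside a flat solid torus does \emph{not} force every closed minimal surface of $\hat M$ into $C(M)$. The maximum principle with a mean-convex foliation only excludes a closed minimal surface \emph{entirely contained} in $\{z_i\ge c\}$ (tangency at the minimum of $z_i$, on the side the mean curvature vector points to); it does not prevent a surface from touching a leaf from the other side -- think of a geodesic hemisphere in $\H^3$ tangent to a horosphere from below. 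So a closed minimal surface of $\hat M$ can perfectly well pass through $C(M)$, climb up an end, run through the filling torus and come back, and nothing in your argument bounds its area from below or rules it out. Since your whole conclusion rests on $\boA_1(\hat M)$ being realized by a surface sitting in the hyperbolic core, this is a genuine gap.

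The paper closes it with two ingredients you are missing. First, Proposition~\ref{prop:max} (applied with $\Lambda(E_i)<\Lambda_0$) shows that a minimal surface which enters $\{z_i\ge 1/2\}$ but does \emph{not} reach the filling region never gets past $\{z_i=1\}$; this handles the ``dips into the end'' case. Second, for surfaces that do reach the filling, one needs a quantitative area estimate: the paper takes $\Psi$ with $\lim_{z\to\infty}\Psi=2$, so the tori $T_i(c)$ do not shrink and the metric on $\{1\le z_i\le L\}$ is uniformly comparable to a Euclidean product; a surface crossing all these tori then has area at least $kL$ with $k$ independent of $L$, and choosing $L$ with $kL\ge\boA_1(M)+1$ excludes such surfaces from realizing $\boA_1(\hat M)$. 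Note that your hypotheses on $\Psi$ (non-decreasing, concave) allow $\Psi(z)=z$, i.e.\ a genuinely cuspidal end in which the tori have area tending to $0$, and then no such linear-in-$L$ lower bound holds; the condition $\Psi\to 2$ is not cosmetic. The gluing of the solid torus is indeed also delicate (matching first and second fundamental forms at $T(z_0)$ with a rigid flat product metric generally fails; the paper interpolates with a cutoff $\eta$ over $\{L\le z_i\le L+1\}$ instead), but that part of your plan is repairable; the barrier claim is not.
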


The proof is based on ideas that appear in \cite{CoHaMaRo}.

\begin{proof}
Let $T_1,\dots,T_p$ be the boundary tori of $\barre M$, because of the above discussion,
there are bases of the homology of $T_1,\dots,T_p$ such that the associated Dehn filling
$D(\barre M)$ has the same Heegaard genus as $\barre M$.

We are going to construct some Riemannian metric on $D(\barre M)$ to estimate the areas of
minimal surfaces in $M$. Let $\Psi$ be a function on $[1/2,\infty)$ such that
\begin{itemize}
\item $\Psi(z)=z$ on $[1/2, 1]$,
\item $\Psi'(z)>0$,
\item $\lim_\infty \Psi=2$.
\end{itemize}

Let $C(M)$ be the compact part of $M$ that contains all compact minimal surfaces in $M$.
Now, for each end $E_i$, we can find a chart $N_{v_1^i,v_2^i}$ such that $E_i\cap
C(M)=\emptyset$, $\Lambda(E_i)<\Lambda_0$ where $\Lambda_0$ is given by
Proposition~\ref{prop:max} for $g_\Psi$ and $t_0=1/3$. We also assume that the curves
$t\mapsto (tv_1^i,1)$ and $t\mapsto (tv_2^i,1)$ in $T_i(1)$ give the homology basis of
$T_i$ that we have fixed above.

Let us fix some large $L>0$, we are going to change the metric on $\{L\le z_i\le L+1\}$ in
order to perform the Dehn filling. The tori $T_i(c)$ is parametrized by
$(u\frac{v_1^i}{2\pi}+v\frac{v_2^i}{2\pi})$ where $(u,v)\in\S^1\times \S^1$. Then the metric
$g_\Psi$ on $N_{v_1^i,v_2^i}$ can be written
$$
g_\Psi=\frac1{\Psi^2(z_i)}(a^2\dd u^2+2b\dd u\dd v+c^2\dd v^2+\dd z_i^2)
$$
for some $a,b,c\in \R$. Let $\eta$ be a smooth non increasing function on $[L,L+1]$ such
that $\eta(z)=1$ near $L$ and $\eta(z)=((L+1)-z)/a$ near $L+1$. We then change the metric
on $\{L\le z_i\le L+1\}$ by 
$$
\frac1{\Psi^2(z_i)}(\dd z_i^2+\eta^2(z_i)a^2\dd u^2+2\eta(z_i)b\dd u\dd v+c^2\dd v^2)
$$
This new metric is singular at $z_i=L+1$. Actually, it consists in cutting $\{z_i\ge L\}$
from the end $E_i$ and gluing a solid torus along $T(L)$. To see this, Let
$(r,\theta)\in[0,1]\times\S^1$ be the polar coordinates on the unit disk and $h$ be the
map $D\times \S^1\to\S^1\times\S^1\times[L,L+1]$ defined by
$(r,\theta,v)\mapsto(\theta,v,L+1-r)$. The induced metric by $h$ on $D\times\S^1$ near
$r=0$ is then
$$
\frac1{\Psi^2(L+1-r)}(\dd r^2+r^2\dd \theta^2+2\frac ba r\dd\theta\dd v+c^2\dd v^2)
$$
which is well defined on $D\times \S^1$. The map $h$ tells us that we have performed the
Dehn filling we want. We also notice that the tori $T_i(c)=\{z_i=c\}$ have positive mean
curvature with respect to $\partial_{z_i}$ for $L\le c<L+1$.

Once all Dehn fillings are done, we have constructed a metric on $D(\barre M)$ (it depends
on the parameter $L$ that we need to adjust). Let us study the area of minimal surfaces in
$D(\barre M)$ with that metric. Let $\Sigma$ be a minimal surface in $D(\barre M)$, first
it can stay outside of all the $\{z_i\ge 1\}$, these correspond to minimal surfaces living
in the original hyperbolic part of $D(\barre M)$ so in $M$. These surfaces are the ones
whose areas we wish to bound from below. Since the foliation $\{T_i(c)\}_{c\in [1,L+1)}$
is mean
convex with respect to $\partial_{z_i}$, there is no minimal surface inside an end $\{z_i\ge
1\}$. Proposition~\ref{prop:max} tells us that a minimal surface that intersects
$\{z_i\ge 1/2\}$ but does not reach $T_i(L)$ never enters into $\{z_i>1\}$. So it stays in the
original hyperbolic part. So a minimal surface that meets $\{z_i=1\}$ meets necessarily
$\{z_i=L\}$. Thus it meets all tori $T_i(c)$ for $1\le c\le L$. Since $\lim_\infty
\Psi=2$, for large $z_i$ the metric $g_\Psi$ is close to the Euclidean metric and then there is
some constant $k$ that does not depend on $L$ such that 
$$
|\Sigma\cap\{1\le z_i\le L\}|\ge kL.
$$
So if $L$ is chosen large, the area of $\Sigma$ is large. More precisely, we choose $L$
such that $kL\ge \boA_1(M)+1$.

Since $C(M)$ is isometrically contained in $D(\barre M)$, we have $\boA_1(M)\ge
\boA_1(D(\barre M))$. The above discussion implies that any minimal surface $\Sigma$ in
$D(\barre M)$ either is contained in $C(M)$ or has area $|\Sigma|\ge kL\ge \boA_1(M)+1$.
So $\boA_1(M)=\boA_1(D(\barre M)$. Moreover $\boA_1(D(\barre M)$ is realized by a minimal
surface contained in $C(M)$ where the metric is hyperbolic and where we can apply the same
reasoning as in the proof of Theorem~C and using the fact that $g_H(D(\barre
M))=g_H(\barre M)\ge 7$.
\end{proof}

In a finite volume hyperbolic $3$-manifold, it is also interesting to find a good lower
bound of the area of non compact minimal surfaces. We notice that in this case the
estimates $\pi|\chi(\Sigma)|\le |\Sigma|\le 2\pi|\chi(\Sigma)|$ are still valid for
properly embedded stable minimal surfaces (see \cite{CoHaMaRo}).

%%%%%%%%%%
%%%%%%%%%%	

\bibliographystyle{amsplain}
\bibliography{../reference.bib}

\providecommand{\bysame}{\leavevmode\hbox to3em{\hrulefill}\thinspace}
\providecommand{\MR}{\relax\ifhmode\unskip\space\fi MR }
% \MRhref is called by the amsart/book/proc definition of \MR.
\providecommand{\MRhref}[2]{%
  \href{http://www.ams.org/mathscinet-getitem?mr=#1}{#2}
}
\providecommand{\href}[2]{#2}
\begin{thebibliography}{10}

\bibitem{Alm}
Frederick~Justin Almgren, Jr., \emph{The homotopy groups of the integral cycle
  groups}, Topology \textbf{1} (1962), 257--299.

\bibitem{CaGo}
A.~J. Casson and C.~McA. Gordon, \emph{Reducing {H}eegaard splittings},
  Topology Appl. \textbf{27} (1987), 275--283.

\bibitem{CoDeL}
Tobias~H. Colding and Camillo De~Lellis, \emph{The min-max construction of
  minimal surfaces}, Surveys in differential geometry, {V}ol.\ {VIII}
  ({B}oston, {MA}, 2002), Surv. Differ. Geom., VIII, Int. Press, Somerville,
  MA, 2003, pp.~75--107.

\bibitem{CoHaMaRo}
Pascal Collin, Laurent Hauswirth, Laurent Mazet, and Harold Rosenberg,
  \emph{Minimal surfaces in finite volume non compact hyperbolic
  $3$-manifolds}, preprint, arXiv:1405.1324.

\bibitem{DeLPe}
Camillo De~Lellis and Filippo Pellandini, \emph{Genus bounds for minimal
  surfaces arising from min-max constructions}, J. Reine Angew. Math.
  \textbf{644} (2010), 47--99.

\bibitem{DeLTa}
Camillo De~Lellis and Dominik Tasnady, \emph{The existence of embedded minimal
  hypersurfaces}, J. Differential Geom. \textbf{95} (2013), no.~3, 355--388.

\bibitem{Fed}
Herbert Federer, \emph{Geometric measure theory}, Die Grundlehren der
  mathematischen Wissenschaften, Band 153, Springer-Verlag New York Inc., New
  York, 1969.

\bibitem{Gal}
Jos{\'e}~A. G{\'a}lvez, \emph{Surfaces of constant curvature in 3-dimensional
  space forms}, Mat. Contemp. \textbf{37} (2009), 1--42.

\bibitem{Has}
Joel Hass, \emph{Acylindrical surfaces in {$3$}-manifolds}, Michigan Math. J.
  \textbf{42} (1995), 357--365.

\bibitem{HaSc}
Joel Hass and Peter Scott, \emph{The existence of least area surfaces in
  {$3$}-manifolds}, Trans. Amer. Math. Soc. \textbf{310} (1988), 87--114.

\bibitem{Ket}
Daniel Ketover, \emph{Degeneration of min-max sequences in three-manifold},
  preprint, arXiv:1312.2666.

\bibitem{LiYa}
Peter Li and Shing~Tung Yau, \emph{A new conformal invariant and its
  applications to the {W}illmore conjecture and the first eigenvalue of compact
  surfaces}, Invent. Math. \textbf{69} (1982), 269--291.

\bibitem{MaNe3}
Fernando~C. Marques and Andr{\'e} Neves, \emph{Rigidity of min-max minimal
  spheres in three-manifolds}, Duke Math. J. \textbf{161} (2012), 2725--2752.

\bibitem{MaNe}
\bysame, \emph{Min-max theory and the {W}illmore conjecture}, Ann. of Math. (2)
  \textbf{179} (2014), 683--782.

\bibitem{MeSiYa}
William Meeks, III, Leon Simon, and Shing~Tung Yau, \emph{Embedded minimal
  surfaces, exotic spheres, and manifolds with positive {R}icci curvature},
  Ann. of Math. (2) \textbf{116} (1982), 621--659.

\bibitem{Mor3}
Frank Morgan, \emph{Examples of unoriented area-minimizing surfaces}, Trans.
  Amer. Math. Soc. \textbf{283} (1984), 225--237.

\bibitem{Mor2}
\bysame, \emph{A regularity theorem for minimizing hypersurfaces modulo
  {$\nu$}}, Trans. Amer. Math. Soc. \textbf{297} (1986), 243--253.

\bibitem{MoRu}
Yoav Moriah and Hyam Rubinstein, \emph{Heegaard structures of negatively curved
  {$3$}-manifolds}, Comm. Anal. Geom. \textbf{5} (1997), 375--412.

\bibitem{NaSo}
Hossein Namazi and Juan Souto, \emph{Heegaard splittings and pseudo-{A}nosov
  maps}, Geom. Funct. Anal. \textbf{19} (2009), 1195--1228.

\bibitem{Pit2}
Jon~T. Pitts, \emph{Existence and regularity of minimal surfaces on
  {R}iemannian manifolds}, Mathematical Notes, vol.~27, Princeton University
  Press, Princeton, N.J.; University of Tokyo Press, Tokyo, 1981.

\bibitem{RiSe}
Yo'av Rieck and Eric Sedgwick, \emph{Persistence of {H}eegaard structures under
  {D}ehn filling}, Topology Appl. \textbf{109} (2001), 41--53.

\bibitem{RiRo}
Manuel Ritor{\'e} and Antonio Ros, \emph{Stable constant mean curvature tori
  and the isoperimetric problem in three space forms}, Comment. Math. Helv.
  \textbf{67} (1992), 293--305.

\bibitem{ScSi}
Richard Schoen and Leon Simon, \emph{Regularity of stable minimal
  hypersurfaces}, Comm. Pure Appl. Math. \textbf{34} (1981), 741--797.

\bibitem{Sha}
Peter~B. Shalen, \emph{Hyperbolic volume, {H}eegaard genus and ranks of
  groups}, Workshop on {H}eegaard {S}plittings, Geom. Topol. Monogr., vol.~12,
  Geom. Topol. Publ., Coventry, 2007, pp.~335--349.

\bibitem{Sim}
Leon Simon, \emph{Lectures on geometric measure theory}, Proceedings of the
  Centre for Mathematical Analysis, Australian National University, vol.~3,
  Australian National University Centre for Mathematical Analysis, 1983.

\bibitem{Smi}
F.~Smith, \emph{On the existence of embedded minimal {$2$}-spheres in a
  {$3$}-sphere, endowed with an arbitrary {R}iemannian metric}, Ph.D. thesis,
  University of {M}elbourne, 1982, Supervisor : {L}eon {S}imon.

\bibitem{Sout}
Juan Souto, \emph{Geometry, {H}eegaard splittings and rank of the fundamental
  group of hyperbolic {$3$}-manifolds}, Workshop on {H}eegaard {S}plittings,
  Geom. Topol. Monogr., vol.~12, Geom. Topol. Publ., Coventry, 2007,
  pp.~351--399.

\bibitem{Zho}
Xin Zhou, \emph{Min-max minimal surface in $({M}^{n+1},g)$ with ${R}ic_g>0$ and
  $2\le n\le 6$}, preprint, arXiv:1210.2112.

\end{thebibliography}

\end{document}